\newtheorem{theorem}{Theorem}[section]
\newtheorem{lemma}{Lemma}[section]
\newtheorem{corollary}{Corollary}[section]
\theoremstyle{definition}
\newtheorem{remark}{Remark}[section]
\numberwithin{equation}{section}
\newcommand\blfootnote[1]{\begingroup\renewcommand\thefootnote{}\footnote{#1}\addtocounter{footnote}{-1}\endgroup}
\begin{document}

\title{
{\bf\Large
Multiple positive solutions for a superlinear problem: a topological approach }\footnote{Work performed under the auspices of the
Grup\-po Na\-zio\-na\-le per l'Anali\-si Ma\-te\-ma\-ti\-ca, la Pro\-ba\-bi\-li\-t\`{a} e le lo\-ro
Appli\-ca\-zio\-ni (GNAMPA) of the Isti\-tu\-to Na\-zio\-na\-le di Al\-ta Ma\-te\-ma\-ti\-ca (INdAM).}
\vspace{1mm}}

\author{
{\bf\large Guglielmo Feltrin}
\vspace{1mm}\\
{\it\small SISSA - International School for Advanced Studies}\\
{\it\small via Bonomea 265}, {\it\small 34136 Trieste, Italy}\\
{\it\small e-mail: guglielmo.feltrin@sissa.it}\vspace{1mm}\\
\vspace{1mm}\\
{\bf\large Fabio Zanolin}
\vspace{1mm}\\
{\it\small Department of Mathematics and Computer Science, University of Udine}\\
{\it\small via delle Scienze 206},
{\it\small 33100 Udine, Italy}\\
{\it\small e-mail: fabio.zanolin@uniud.it}}

\date{}

\maketitle

\vspace{-2mm}

\begin{abstract}
\noindent
We study the  multiplicity of positive solutions for a two-point boundary value problem
associated to the nonlinear second order equation $u''+f(x,u)=0$.
We allow $x \mapsto f(x,s)$ to change its sign in order to cover the case of
scalar equations with indefinite weight.
Roughly speaking, our main assumptions require that $f(x,s)/s$ is below $\lambda_{1}$ as $s\to 0^{+}$
and above $\lambda_{1}$ as $s\to +\infty$. In particular, we can deal with the situation in which
$f(x,s)$ has a superlinear growth at zero and at infinity.
We propose a new approach based on the topological degree which provides the multiplicity of solutions.
Applications are given for $u'' + a(x) g(u) = 0$, where we prove the existence of $2^{n}-1$
positive solutions when $a(x)$ has $n$ positive humps and $a^{-}(x)$ is sufficiently large.
\blfootnote{\textit{2010 Mathematics Subject Classification:} 34B18, 34B15.}
\blfootnote{\textit{Keywords:} positive solutions, superlinear equations, indefinite weight, multiplicity results, boundary value problems.}
\end{abstract}

\section{Introduction}\label{section1}

In this paper we study the problem of existence and multiplicity
of positive solutions for the nonlinear two-point boundary value problem
\begin{equation}\label{two-pointBVP0}
\begin{cases}
\, u''+f(x,u)=0 \\
\, u(0)=u(L)=0,
\end{cases}
\end{equation}
where $f\colon \mathopen{[}0,L\mathclose{]}\times{\mathbb{R}}^{+} \to {\mathbb{R}}$ is a Carath\'{e}odory function such that $f(x,0)\equiv 0$.
As a main application of our results for \eqref{two-pointBVP0} we consider the case in which
\begin{equation}\label{eq-1.1}
f(x,s) = a(x) g(s),
\end{equation}
with $g(s)/s \to 0$ as $s\to 0^{+}$ and $g(s)/s \to +\infty$ as $s\to +\infty$,
thus covering the classical superlinear equation with $g(s) = s^{p}$, $p > 1$.
The weight function $a(x)$ is allowed to change its sign on the interval $I:= \mathopen[0,L\mathclose]$,
so that, according to a terminology which is standard in this setting (cf.~\cite{BerestyckiCapuzzo-DolcettaNirenberg2}),
we deal with a \textit{superlinear indefinite} problem.

Starting with the Eighties (see \cite{BandlePozioTesei,HessKato}), a great deal of attention has been devoted
to the study of nonlinear boundary value problems with a sign indefinite weight. The investigation of these problems
has its own interest from the point of view of the application of the methods of nonlinear analysis to
differential equations. A strong motivation also comes from the search of stationary solutions
of parabolic equations arising in different contexts, such as population dynamics and reaction-diffusion processes
(see, for instance, \cite{Ackermann} for a recent survey in this direction).
In such a situation, \eqref{two-pointBVP0} can be viewed as a one-dimensional version of the classical
Dirichlet problem
\begin{equation}\label{PDEBVP}
 \begin{cases}
\, -\Delta u=f(x,u) & \text{ in } \Omega   \\
\, u=0              & \text{ on } \partial\Omega,
\end{cases}
\end{equation}
which comes up, for instance, in the search of radially symmetric solutions of certain nonlinear PDEs
(see also Section~\ref{subsec-5.3}).

Existence or  multiplicity results of positive solutions for \eqref{PDEBVP} in the superlinear indefinite case
have been obtained in
\cite{AlamaTarantello, AmannLopezGomez, BerestyckiCapuzzo-DolcettaNirenberg1, BerestyckiCapuzzo-DolcettaNirenberg2}
(see also \cite{PapiniZanolin2} for a more complete list of references concerning different aspects related to the
study of superlinear indefinite problems, including the case of non-positive oscillating solutions).
Typically, the right-hand side of \eqref{PDEBVP} takes the form
\begin{equation*}
f(x,s) = \lambda s + a(x) g(s),
\end{equation*}
with $\lambda$ a real parameter.
In some cases also the weight function $a(x)$ depends on a parameter
which plays the role of strengthening or weakening the positive (or negative) part of the coefficient $a(x)$
(see \cite{BandlePozioTesei,LopezGomez}).
For convenience, in the present paper, when we need to underline such kind of dependence in the weight function,
we write
\begin{equation}\label{eq-1.2}
a(x) = a_{\mu}(x) := a^{+}(x) - \mu a^{-}(x), \quad \text{with } \, \mu > 0.
\end{equation}

In \cite{GaudenziHabetsZanolin2}, Gaudenzi, Habets and Zanolin proved the existence of at least three
positive solutions for the two-point boundary value problem associated to
\begin{equation}\label{eq-1.3}
u''+ a_{\mu}(x)u^{p}=0, \quad p>1,
\end{equation}
when $a_{\mu}(x)$ has two positive humps separated by a negative one, provided that $\mu > 0$ is
sufficiently large. The same multiplicity result has been obtained by Boscaggin in \cite{Boscaggin}
for the Neumann problem.
The technique of proof in \cite{GaudenziHabetsZanolin2}, based on the shooting method, has been
generalized in \cite{GaudenziHabetsZanolin4} in order to provide the existence of seven positive solutions
for \eqref{eq-1.3} (for the Dirichlet problem and with $\mu$ large) when $a(x)$ has three positive humps separated
by two negative ones. Generally speaking, this fact suggests the existence of $2^{n}-1$ positive solutions
(for $\mu$ large) when the weight function presents $n$ positive humps separated by $n-1$ negative ones.
It is interesting to observe that already in \cite{Gomez-RenascoLopez-Gomez}, for the one dimensional case
and
\begin{equation*}
f(x,s) = \lambda s + a(x) u^{p}, \quad p>1,
\end{equation*}
G\'{o}mez-Re\~{n}asco and L\'{o}pez-G\'{o}mez
conjectured this result (for $\lambda < 0$ sufficiently small) based on some numerical evidence (see also \cite{Ackermann}).

The same multiplicity result holds for the indefinite sublinear case, that is for equation
\begin{equation*}
u''+ a(x)u^{p}=0, \quad 0 < p < 1,
\end{equation*}
(cf.~\cite{BandlePozioTesei}).
In this situation, solutions may identically vanish on some sub-domains where $a(x)<0$, due to the lack of
Lipschitz character of the nonlinear term $g(s) = s^{p}$ at $s=0$.

More recently, Bonheure, Gomes and Habets in \cite{BonheureGomesHabets} extended
the multiplicity theorem in \cite{GaudenziHabetsZanolin4} to the PDEs
setting and they obtained a result about $2^{n}-1$ positive solutions for the problem
\begin{equation}\label{PDEBVPp}
\begin{cases}
\, -\Delta u= a(x)u^{p} & \text{ in } \Omega\\
\, u=0                  & \text{ on } \partial\Omega,
\end{cases}
\end{equation}
using a variational technique. In this context, $\Omega \subseteq {\mathbb{R}}^{N}$ is an open bounded domain
of class ${\mathcal{C}}^{1}$ and $1< p < (N+2)/(N-2)$ if $N \geq 3$.
Also for \eqref{PDEBVPp} the multiplicity result holds for $a(x) = a_{\mu}(x)$,
as in \eqref{eq-1.2}, with $\mu > 0$ sufficiently large.

On the other hand, if we have a positive weight function $a(x)$, it is known that the existence
of at least one positive solution to
\begin{equation}\label{PDEBVPag}
\begin{cases}
\, -\Delta u= a(x) g(u) & \text{ in } \Omega\\
\, u=0                  & \text{ on } \partial\Omega
\end{cases}
\end{equation}
is guaranteed for a general class of functions $g(s)$ (including $g(s)=s^{p}$, with $p > 1$, as a particular case).
Indeed, for $f(x,s) = a(x)g(s)$, the superlinear conditions at zero and at infinity can be generalized to suitable
hypotheses of crossing the first eigenvalue
(see \cite{AmbrosettiRabinowitz, Nussbaum1}, where results in this direction were
obtained using a variational and a topological approach, respectively). For instance, if $a(x)\equiv 1$, existence theorems of positive solutions
can be obtained, as in \cite{BrezisTurner, deFigueiredoLionsNussbaum}, provided that
\begin{equation*}
\limsup_{s\to 0^{+}} \dfrac{g(s)}{s} < \lambda_{1} < \liminf_{s\to +\infty} \dfrac{g(s)}{s}
\end{equation*}
(further technical assumptions on $g(s)$ and on the domain $\Omega$ can be required for $N > 1$).
In \cite[Chapter~3]{deFigueiredo}, de Figueiredo obtained sharp existence results
of positive solutions for
\begin{equation*}
\begin{cases}
\, -\Delta u= f(x,u) & \text{ in } \Omega\\
\, u=0               & \text{ on } \partial\Omega,
\end{cases}
\end{equation*}
by assuming
\begin{equation*}
f(x,0) \equiv 0, \quad \lim_{s\to 0^{+}} \dfrac{f(x,s)}{s} = m_{0}(x), \quad \lim_{s\to +\infty} \dfrac{f(x,s)}{s} = m_{\infty}(x).
\end{equation*}
The assumption of crossing the first eigenvalue is expressed by a hypothesis of the form $\mu_{1}(m_{0}) > 1 > \mu_{1}(m_{\infty})$,
where $\mu_{1}(m)$ is the first eigenvalue of $-\Delta u = \lambda m(x) u$. The different conditions at zero and at infinity
imply a change of the value of the fixed point index (for an associated operator) between small and large balls in the cone of positive functions
of a suitable Banach space $X$.
Hence the existence of a nontrivial fixed point is guaranteed by the nonzero index (or degree) on some open set $D \subseteq X$, with $0\notin D$.

In the ODEs case, namely for equation \eqref{two-pointBVP0}, various technical growth conditions on the nonlinearity
can be avoided. Existence theorems of positive solutions have been obtained in \cite{ErbeWang}
for the superlinear case and in
\cite{Ben-NaoumDeCoster, LanWebb, ManasevichNjokuZanolin, NjokuZanolin1}
for ``crossing the first eigenvalue'' type conditions. In this direction, an existence result has been produced in
\cite{GaudenziHabetsZanolin3} for \eqref{two-pointBVP0} with $f(x,s)$ as in \eqref{eq-1.1}.
In this case the weight function has nonconstant sign and may vanish on some subintervals of $I=\mathopen{[}0,L\mathclose{]}$.
It is also assumed that the set where $a(x) > 0$ is the union of $n$ pairwise disjoint intervals $I_{i}$.
Using an approach based on the theory of not well-ordered upper and lower-solutions,
the existence of a positive solution is guaranteed provided that
\begin{equation}\label{eq-1.5}
\limsup_{s\to 0^{+}} \dfrac{g(s)}{s} < \lambda_{0}
\quad \text{ and } \quad \liminf_{s\to +\infty} \dfrac{g(s)}{s} > \max_{i=1,\ldots,n} \lambda_{1}^{i},
\end{equation}
where $\lambda_{0}$ is the first eigenvalue of $\varphi'' + \lambda a^{+}(x) \varphi = 0$ on $I$ and
$\lambda_{1}^{i}$ is the first eigenvalue of $\varphi'' + \lambda a^{+}(x) \varphi = 0$ on $I_{i}$.

\medskip

A question, which naturally arises by a comparison between the above recalled existence theorem and
the multiplicity results in \cite{BonheureGomesHabets,GaudenziHabetsZanolin4}, concerns the possibility of
producing a theorem on the existence of $2^{n} -1$ positive solutions when $a(x) = a_{\mu}(x)$ is positive
on $n$ intervals separated by $n-1$ intervals of negativity and $g(s)$ satisfies a condition like \eqref{eq-1.5}.
The main goal of the present paper is to provide an affirmative answer to this question.
In this manner, we extend \cite{GaudenziHabetsZanolin3} and \cite{GaudenziHabetsZanolin4} at the same
time and, moreover, we are able to prove that the multiplicity results in \cite{GaudenziHabetsZanolin4}
hold for a broad class of nonlinearities which include $g(s) = s^{p}$, $p >1$, as a special case.
More precisely, the following result holds.

\begin{theorem}\label{th-1.1}
Suppose that $a \colon \mathopen{[}0,L\mathclose{]}\to {\mathbb{R}}$ is a continuous function and there are
$2n$ points
\begin{equation*}
0 = \sigma_{1} < \tau_{1} < \sigma_{2} < \tau_{2} < \ldots < \sigma_{n} < \tau_{n} = L,
\end{equation*}
such that $\tau_{1},\sigma_{2},\tau_{2},\ldots,\sigma_{n}$ are simple zeros of $a(x)$ and, moreover,
\begin{itemize}
\item[$\bullet\;$] $a(x)\geq0$, for all $x\in I_{i}:=\mathopen[\sigma_{i},\tau_{i}\mathclose]$, $i=1,\ldots,n$;
\item[$\bullet\;$] $a(x)\leq0$, for all $x\in\mathopen[\tau_{i},\sigma_{i+1}\mathclose]$, $i=1,\ldots,n-1$.
\end{itemize}
Assume also that $g\colon{\mathbb{R}}^{+} \to {\mathbb{R}}^{+}$ is a continuous function with $g(0) = 0$ and $g(s) > 0$ for $s > 0$
and, moreover, \eqref{eq-1.5} holds.
Then there exists $\mu^{*}>0$ such that, for each $\mu > \mu^{*}$,
problem
\begin{equation}\label{two-pointBVPmu0}
\begin{cases}
\, u''+a_{\mu}(x)g(u)=0 \\
\, u(0)=u(L)=0
\end{cases}
\end{equation}
has at least $2^{n} -1$ positive solutions.
\end{theorem}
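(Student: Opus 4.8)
The plan is to recast \eqref{two-pointBVPmu0} as a fixed point problem in the Banach space $X = \mathcal{C}(\mathopen[0,L\mathclose])$ and to exploit the additivity and homotopy invariance of the Leray--Schauder degree, localizing the count of solutions hump by hump. Writing $G(x,\xi)$ for the Green's function of $-u''$ under Dirichlet conditions and extending $g$ to all of $\mathbb{R}$ by $g(s) = 0$ for $s \le 0$, positive solutions of \eqref{two-pointBVPmu0} correspond to nontrivial fixed points of the completely continuous operator
\begin{equation*}
\Phi_{\mu}(u)(x) := \int_{0}^{L} G(x,\xi)\, a_{\mu}(\xi)\, g(u(\xi))\, d\xi .
\end{equation*}
First I would check, via a maximum-principle argument, that every nontrivial fixed point is strictly positive on $\mathopen]0,L\mathclose[$ (on the intervals where $a_{\mu} \le 0$ the function $u$ is convex, which together with $u(0)=u(L)=0$ and $g \ge 0$ rules out interior zeros), so that it suffices to produce nontrivial fixed points of $\Phi_{\mu}$.

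The two elementary building blocks are local degree lemmas. For the behaviour at the origin I would use the first condition in \eqref{eq-1.5}: comparing a hypothetical small solution with the principal eigenfunction of $\varphi'' + \lambda_{0} a^{+}(x)\varphi = 0$ on $I$ shows that no nontrivial fixed point of $\vartheta\Phi_{\mu}$, $\vartheta \in \mathopen]0,1\mathclose]$, can have small norm, whence $\deg(I - \Phi_{\mu}, B_{r}, 0) = 1$ for $r>0$ small. For the behaviour at infinity I would fix an index $i$ and use the second condition in \eqref{eq-1.5} on the hump $I_{i}$: testing the equation against the positive eigenfunction $\varphi_{i}$ of $\varphi'' + \lambda_{1}^{i} a^{+}(x)\varphi = 0$ on $I_{i}$ provides an a priori bound $R$ and shows that $u \ne \Phi_{\mu}(u) + \alpha\varphi_{i}$ for $\|u\|_{I_{i}} = R$ and $\alpha \ge 0$, hence a vanishing degree on any open set whose boundary forces $\|u\|_{I_{i}} = R$.

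The multiplicity comes from a combinatorial decomposition built from these two local indices. Write $\|u\|_{I_{i}} := \max_{x \in I_{i}}|u(x)|$ and, for each subset $\mathcal{S} \subseteq \{1,\ldots,n\}$, set
\begin{equation*}
\Omega_{\mathcal{S}} := \bigl\{ u \in X : \|u\|_{I_{i}} < r \ \text{for } i \notin \mathcal{S}, \ \ r < \|u\|_{I_{i}} < R \ \text{for } i \in \mathcal{S} \bigr\}.
\end{equation*}
These $2^{n}$ sets are pairwise disjoint, only $\Omega_{\emptyset}$ contains the origin, and no fixed point lies on the threshold levels $\|u\|_{I_{i}} = r$. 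The origin lemma applied to the common small core $\Omega_{\emptyset}$ gives degree $1$, while the infinity lemma, with the push-out performed along $\varphi_{i_{0}}$ for any $i_{0} \in \mathcal{S}$, gives degree $0$ on each sub-box $\{\|u\|_{I_{i}} < R, \ i \in \mathcal{S}; \ \|u\|_{I_{i}} < r, \ i \notin \mathcal{S}\}$. Since such a sub-box is, up to the solution-free thresholds, the disjoint union $\bigsqcup_{\mathcal{T}\subseteq\mathcal{S}}\Omega_{\mathcal{T}}$, additivity yields the recursion $\sum_{\emptyset \neq \mathcal{T} \subseteq \mathcal{S}} \deg(I-\Phi_{\mu},\Omega_{\mathcal{T}},0) = -1$, valid for every nonempty $\mathcal{S}$, whose Möbius solution is $\deg(I - \Phi_{\mu}, \Omega_{\mathcal{S}}, 0) = (-1)^{|\mathcal{S}|}$. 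As this is nonzero for each of the $2^{n} - 1$ nonempty $\mathcal{S}$, and the $\Omega_{\mathcal{S}}$ are disjoint and avoid the origin, we obtain $2^{n} - 1$ distinct positive solutions.

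The main obstacle is the decoupling across humps and the accompanying uniformity in $\mu$. One must make quantitative the assertion that, for $\mu$ large, the strong negativity of $\mu a^{-}$ on each interval $\mathopen[\tau_{i},\sigma_{i+1}\mathclose]$ collapses the solution between consecutive humps, so that prescribing $\|u\|_{I_{i}} < r$ on the humps outside $\mathcal{S}$ is genuinely compatible with pushing to infinity along $\varphi_{i_{0}}$ on a hump inside $\mathcal{S}$; only then are all $2^{n}$ configurations realizable and the sub-box index-$0$ computation legitimate. This forces a careful choice of the thresholds $r$ and $R$ and of $\mu^{*}$, compatibly with $\lambda_{0}$ and the $\lambda_{1}^{i}$, ensuring that no fixed point sits on $\partial\Omega_{\mathcal{S}}$. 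The analytic heart is the index-$0$ estimate, which turns the superlinear growth at infinity on each hump into an effective displacement along $\varphi_{i}$, uniformly over the prescribed smallness on the remaining humps.
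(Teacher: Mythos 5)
Your degree-theoretic skeleton coincides with the paper's: your sets $\Omega_{\mathcal{S}}$ are exactly the paper's $\Lambda^{\mathcal{I}}$, your M\"{o}bius inversion is the paper's inductive Lemma 4.1, and your two local lemmas (degree $1$ on small balls via comparison with $\lambda_{0}$, degree $0$ via a push-out and the a priori bound at infinity) are the paper's Lemmas 2.2--2.5. The genuine gap is the admissibility statement you need --- that no solution of the pushed-out equation sits on a threshold piece $\max_{I_{k}}|u|=r$ of $\partial\Omega_{\mathcal{S}}$. You first assert it as a fact (``no fixed point lies on the threshold levels'') and then, in your closing paragraph, concede it is ``the main obstacle'' without ever proving it. This is not a deferrable verification: it is the \emph{only} place in the entire proof where the parameter $\mu$ enters and where $\mu^{*}$ gets constructed. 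The computations at zero, the a priori bound $R$, and the combinatorics are all $\mu$-independent, so as written your argument would yield $2^{n}-1$ solutions for every $\mu>0$, which is not what the theorem asserts and is not expected to be true; the entire content specific to this theorem lives in the step you skipped.

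For the record, the paper closes this gap (proof of Theorem 5.3) by a mechanism different from the ``collapse between humps'' picture in your last paragraph. Suppose $u$ solves the pushed-out problem, $u<R$ on $I$, and $\max_{I_{k}}u=r$ on some hump $I_{k}=\mathopen[\sigma_{k},\tau_{k}\mathclose]$ outside $\mathcal{S}$. \emph{Step 1:} the Carath\'{e}odory bound gives $|u'|\leq M_{k}$ on $I_{k}$. \emph{Step 2:} a Sturm comparison on $I_{k}$ against the positive eigenfunction of $\varphi''+\lambda_{1}^{k}a^{+}(x)\varphi=0$, combined with $g(s)<(\lambda_{1}^{k}-\delta)s$ for $0<s\leq r$ (note that the hypothesis at zero, $g_{0}<\lambda_{0}\leq\lambda_{1}^{k}$, is used here a \emph{second} time) and the concavity estimate $u(x)\geq r\min\{x-\sigma_{k},\tau_{k}-x\}/(\tau_{k}-\sigma_{k})$, yields $u(\sigma_{k})+u(\tau_{k})\geq c_{k}r$ with $c_{k}$ independent of $u$ and $\mu$: a solution of size $r$ on a hump cannot be small at both endpoints of that hump. \emph{Step 3:} say $u(\tau_{k})\geq c_{k}r/2$; then, by the derivative bound and convexity, $u\geq c_{k}r/4$ on a fixed subinterval $\mathopen[\tau_{k},\tau_{k}+\delta_{k}^{+}\mathclose]$ of the adjacent negativity interval, where $u''=\mu a^{-}(x)g(u)\geq\mu\nu_{k}a^{-}(x)$ with $\nu_{k}:=\min_{c_{k}r/4\leq s\leq R}g(s)>0$; integrating twice forces $u(\tau_{k}+\delta_{k}^{+})>R$ as soon as $\mu>\mu_{k}^{+}$, contradicting the a priori bound. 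So large $\mu$ does not make the $2^{n}$ configurations ``realizable''; rather, it makes any solution touching level $r$ on a hump blow past $R$ on the adjacent negativity interval, which is what expels fixed points from the boundary. Two secondary points: the push-out must be along $v(x)=\int_{I}G(x,\xi)\varphi_{i_{0}}(\xi)\mathbbm{1}_{I_{i_{0}}}(\xi)\,d\xi$ rather than along $\varphi_{i_{0}}$ itself, so that fixed points of the homotopy solve a differential equation to which the maximum principle and the comparison of Step 2 apply; and the admissibility must hold uniformly in the push-out, i.e.\ for every Carath\'{e}odory $h\geq a_{\mu}(x)g(s)$ on the humps in $\mathcal{S}$ with $h=a_{\mu}(x)g(s)$ elsewhere --- the paper's Steps 1--3 deliver this because they only use the equation on $I_{k}$ and on the adjacent negativity intervals, where $h$ coincides with $a_{\mu}g$.
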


\noindent
The assumptions on the sign of $a(x)$ do not prevent the possibility that $a(x)$ is identically zero on some
subintervals of $\mathopen{[}\sigma_{i},\tau_{i}\mathclose{]}$ or $\mathopen{[}\tau_{i},\sigma_{i+1}\mathclose{]}$.
The hypothesis that $\tau_{1},\sigma_{2},\tau_{2},\ldots,\sigma_{n}$ are simple zeros of $a(x)$
is considered here only in order to provide a simpler statement of our theorem. Indeed, this condition
can be significantly relaxed (cf.~Theorem~\ref{th-5.3}).

Figure~\ref{fig-01} adds a graphical explanation to Theorem~\ref{th-1.1}
in a case in which the weight function has two positive humps separated by a negative one.
We stress that in our example $g(s)/s\not\to +\infty$ as $s\to +\infty$, so that it shows a
case of applicability of Theorem~\ref{th-1.1} which is not contained in \cite{BonheureGomesHabets,GaudenziHabetsZanolin2,GaudenziHabetsZanolin4}.

\begin{figure}[h!]
\centering
\includegraphics[width=0.4\textwidth]{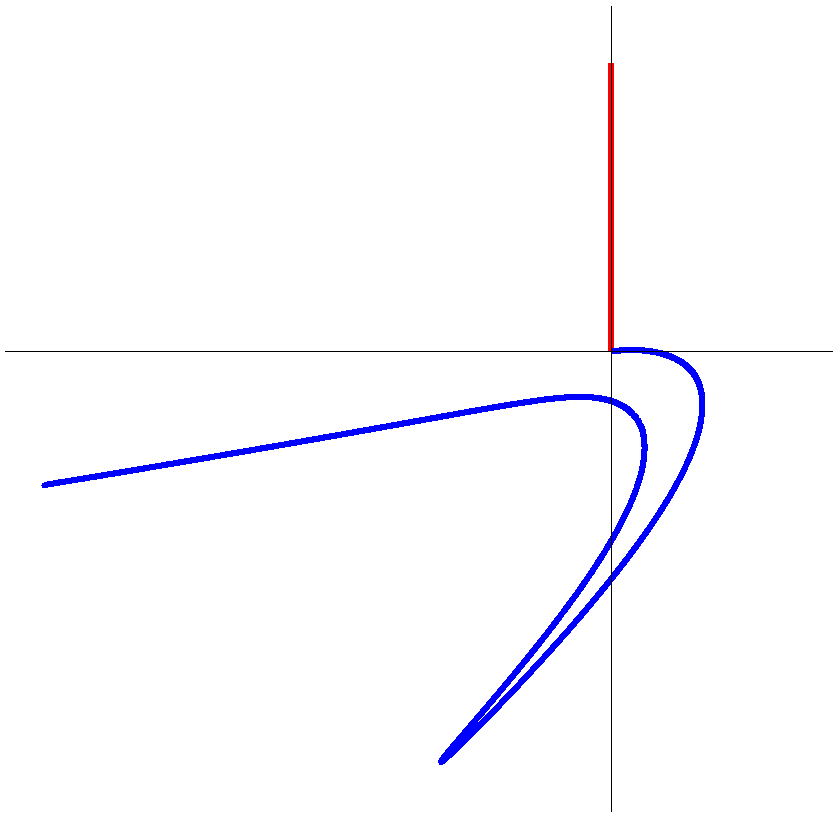}\qquad
\includegraphics[width=0.5\textwidth]{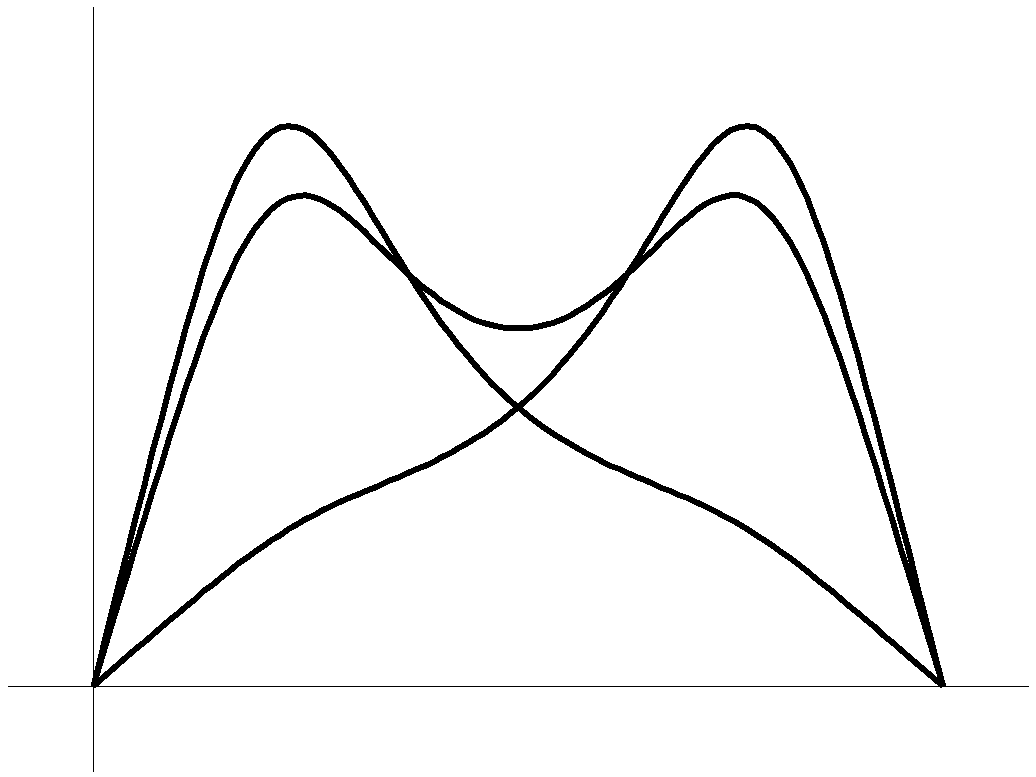}
\caption{\small{An example of three positive solutions for problem \eqref{two-pointBVPmu0}.
For this numerical simulation we have chosen $I = \mathopen{[}0,1\mathclose{]}$, $a(x) = \sin(3\pi x)$, $\mu = 0.5$
and $g(s) = \max\{0,100\, s\arctan|s|\}$. On the left we have shown the image of
the segment $\{0\}\times \mathopen[0,5\mathclose]$ through the Poincar\'{e} map in the phase-plane $(u,u')$.
It intersects the negative part of the $u'$-axis in three points. This means that there are three positive
initial slopes from which depart three solutions at $x=0$ which are positive on $\mathopen{]}0,1\mathclose{[}$
and vanish at $x=1$. On the right we have shown the graphs of these three solutions.}}
\label{fig-01}
\end{figure}

Theorem~\ref{th-1.1} follows from a more general result concerning \eqref{two-pointBVP0} in which we impose
nonuniform conditions on $f(x,s)/s$ (this is also in the spirit of some previous works
\cite{deFigueiredoGossezUbilla,ObersnelOmari}, where ``local'' superlinearity conditions were considered).
Even if we have confined most of our applications to the case
\begin{equation*}
f(x,s) = a(x) g(s),
\end{equation*}
we observe that our general results can be applied to other kind of nonlinearities, as
\begin{equation*}
f(x,s) =\sum_{i=1}^{k} a_{i}(x)g_{i}(s).
\end{equation*}

To prove our results we use a different approach with respect to \cite{GaudenziHabetsZanolin4}
and \cite{BonheureGomesHabets}, where a shooting method and a variational technique were employed.
Indeed, our proofs, based on topological degree, are more in the frame of the classical approach for the
search of fixed points based on the fixed point index for positive operators. Using the additivity/excision
property of the Leray-Schauder degree, we localize nontrivial fixed points on suitable open domains
of the Banach space ${\mathcal{C}}(I)$ of the continuous functions defined on $I$. In general,
our open domains are unbounded and therefore we apply an extension of the degree theory
for locally compact operators (cf.~\cite{Nussbaum2, Nussbaum3}). A typical open (unbounded) set that we
introduce along the proof of our multiplicity results for \eqref{two-pointBVPmu0} is made by functions which
are bounded by suitable constants \textit{only} on the subintervals of positivity $I_{i}$. By the convexity of the solutions
on the intervals of negativity for the weight, this is enough to have a full control of
the solutions (and hence a priori bounds) on the whole domain $I$.
As usual, working with topological degree, the main efforts are concentrated in showing that solutions do not
appear on the boundary along some homotopies. This is guaranteed by suitable technical estimates.

An advantage of an approach based on topological degree consists in the fact that when the degree is well defined and nonzero,
then existence of solutions persists also for small perturbations of the operator. In this manner, our multiplicity result
for problem \eqref{two-pointBVPmu0} extends to the equation
\begin{equation*}
u''+ \lambda u + a_{\mu}(x)g(u)=0,
\end{equation*}
provided that $|\lambda|$ is small enough. Therefore we can establish a complete proof of
G\'{o}mez-Re\~{n}asco and L\'{o}pez-G\'{o}mez conjecture in all its aspects.

\medskip

The plan of the paper is the following.
In Section~\ref{section2} we introduce the key ingredients.
More in detail we illustrate the problem, we define an equivalent fixed point problem
and we list the hypotheses we are going to assume on $f(x,s)$.
Moreover we prove some preliminary technical lemmas that permits to compute the topological degree on suitable small and large balls.
Once nontrivial solutions are obtained,  we also have positive solutions. As usual, this follows from
a maximum principle (Lemma~\ref{Maximum principle}).

In Section~\ref{section3} we present existence theorems.
Using the lemmas of the previous section, we prove that there exists at least a positive solution for \eqref{two-pointBVP0}.
The results we offer are more general versions of \cite[Theorem~4.1]{GaudenziHabetsZanolin3} and \cite[Corollary~4.2]{GaudenziHabetsZanolin3}.
In Theorem~\ref{Th+LinCond} we weaken the hypothesis on the growth of $f(x,s)/s$ at infinity,
by assuming the linear growth of $f(x,s)$ in the subintervals where the growth condition is not valid.

Section~\ref{section4} is devoted to the proof of our main result,
namely Theorem~\ref{Th-Multiplicity}, which deals with the multiplicity of solutions.
By employing the Leray-Schauder topological degree, we deduce the existence of $2^{n}-1$ positive solutions of our BVP.

In Section~\ref{section5} we analyze boundary value problems with $f(x,s)=a(x)g(s)$, as a special case.
We discuss the results obtained in the previous sections in this particular context.
In this way we obtain the existence and multiplicity theorems we look for.
We finish that section with some remarks concerning radially symmetric solutions for \eqref{PDEBVPag}
when $a(x) = {\mathcal{A}}(\|x\|)$ and $\Omega$ is an annular domain.
Possible generalizations on the weight function $a(x)$ are considered, too.

We conclude our paper with Appendix~\ref{section6}
where we recall the definition and a few relevant properties of the Leray-Schauder degree on possibly unbounded sets.
Therein we present two theorems that help us to compute the topological degree in the first sections.

\section{Preliminary results}\label{section2}

In this section we collect some classical and basic facts which are then applied in the proofs of our main results.

Let $I\subseteq {\mathbb{R}}$ be a nontrivial compact interval. Set ${\mathbb{R}}^{+}:=\mathopen[0,+\infty\mathclose[$
and let $f\colon I\times {\mathbb{R}}^{+}\to {\mathbb{R}}$ be an $L^{1}$-Carath\'{e}odory function, that is
\begin{itemize}
  \item $x\mapsto f(x,s)$ is measurable for each $s\in{\mathbb{R}}^{+}$;
  \item $s\mapsto f(x,s)$ is continuous for a.e.~$x\in I$;
  \item for each $r>0$ there is $\gamma_{r}\in L^{1}(I,{\mathbb{R}}^{+})$ such that $|f(x,s)|\leq\gamma_{r}(x)$,
    for a.e.~$x\in I$ and for all $|s|\leq r$.
\end{itemize}

Without loss of generality, we suppose $I:=\mathopen[0,L\mathclose]$ (different choices of $I$ can be made).
We study the two-point boundary value problem
\begin{equation}\label{two-pointBVP}
\begin{cases}
\, u''+f(x,u)=0 \\
\, u(0)=u(L)=0.
\end{cases}
\end{equation}
A \textit{solution} of \eqref{two-pointBVP} is an absolutely continuous function $u\colon\mathopen[0,L\mathclose]\to {\mathbb{R}}^{+}$
such that its derivative $u'(x)$ is absolutely continuous and $u(x)$ satisfies \eqref{two-pointBVP}
for a.e.~$x\in\mathopen[0,L\mathclose]$.
We look for \textit{positive solutions} of \eqref{two-pointBVP},
that is solutions $u$ such that $u(x)>0$ for every $x\in
\mathopen]0,L\mathclose[$.

Throughout the paper we suppose
\begin{equation*}
f(x,0) = 0, \quad \text{for a.e. } x\in I. \leqno{(f^{*})}
\end{equation*}
Using a standard procedure, we extend
$f(x,s)$ to a function $\tilde{f}\colon
I\times{\mathbb{R}}\to{\mathbb{R}}$ defined as
\begin{equation*}
\tilde{f}(x,s)=
\begin{cases}
\, f(x,s), & \text{if } s\geq0; \\
\, 0,  & \text{if } s\leq0;
\end{cases}
\end{equation*}
and we study the modified boundary value problem
\begin{equation}\label{BVP}
\begin{cases}
\, u''+\tilde{f}(x,u)=0 \\
\, u(0)=u(L)=0.
\end{cases}
\end{equation}
As is well known (by the maximum principle), all the possible solutions of \eqref{BVP} are non-negative and
hence solutions of \eqref{two-pointBVP}. Actually, we need the following lemma concerning the positivity of the solutions
of the BVP
\begin{equation}\label{mpBVP}
\begin{cases}
\, v''+h(x,v)=0 \\
\, v(0)=v(L)=0.
\end{cases}
\end{equation}
In the applications, we have $h=\tilde{f}$ or $h$ a suitably chosen function greater than $\tilde{f}$.

\begin{lemma}\label{Maximum principle}
Let $h\colon I\times {\mathbb{R}}\to {\mathbb{R}}$ be an $L^{1}$-Carath\'{e}odory function.
\begin{itemize}
\item [$(i)$] If
    \begin{equation*}
    h(x,s)\geq0, \quad \text{a.e. } x\in I, \text{ for all } s\leq0,
    \end{equation*}
    then any solution of \eqref{mpBVP} is non-negative on $I$.
\item [$(ii)$]
    If
    $h(x,0)\equiv0$ and
    there exist $k_{1},k_{2}\in L^{1}(I,{\mathbb{R}}^{+})$ such that
    \begin{equation*}
    \begin{aligned}
    & \liminf_{s\to0^{+}}\dfrac{h(x,s)}{s}\geq -k_{1}(x), \quad \text{uniformly a.e. } x\in I; \\
    & \limsup_{s\to0^{+}}\dfrac{h(x,s)}{s}\leq k_{2}(x), \quad \text{uniformly a.e. } x\in I;
    \end{aligned}
    \end{equation*}
    then every non-trivial non-negative solution $v$ of \eqref{mpBVP} satisfies $v(x)>0$, for all $x\in\mathopen]0,L\mathclose[$
    and, moreover, $v'(0) > 0 > v'(L)$.
\end{itemize}
\end{lemma}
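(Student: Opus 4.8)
The plan is to prove the two parts of Lemma~\ref{Maximum principle} separately, in each case exploiting the structure of the second-order equation via the integral representation of solutions.

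For part $(i)$, the strategy is to argue by contradiction using a minimum-point argument. Suppose $v$ is a solution of \eqref{mpBVP} and that $v$ attains a strictly negative value somewhere on $I$. Since $v(0)=v(L)=0$, the (global) minimum of $v$ on $\mathopen[0,L\mathclose]$ is attained at some interior point $x_{0}\in\mathopen]0,L\mathclose[$ with $v(x_{0})<0$. Let $\mathopen]\alpha,\beta\mathclose[$ be the maximal open subinterval containing $x_{0}$ on which $v<0$, so that $v(\alpha)=v(\beta)=0$. On this interval, the sign hypothesis $h(x,s)\geq0$ for $s\leq0$ gives $v''(x)=-h(x,v(x))\leq0$ a.e., so $v$ is concave there. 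A concave function that vanishes at both endpoints of $\mathopen[\alpha,\beta\mathclose]$ is non-negative on that interval, contradicting $v(x_{0})<0$. Hence $v\geq0$ throughout $I$.

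For part $(ii)$, I would first fix constants to linearize the two-sided bound near zero. The hypotheses provide $k_{1},k_{2}\in L^{1}$ controlling $h(x,s)/s$ from below and above as $s\to0^{+}$, so there is a $\delta>0$ such that, for a.e.\ $x$ and all $s\in\mathopen[0,\delta\mathclose]$, one has $-k_{1}(x)s \leq h(x,s) \leq (k_{2}(x)+1)s$ (absorbing the limit into a genuine pointwise inequality on a small interval). Let $v$ be a non-trivial non-negative solution; by part $(i)$-type reasoning it is already known to be non-negative, and the task is to upgrade this to strict positivity on the open interval together with the boundary-derivative sign conditions. The key device is to treat $v$ as a solution of the \emph{linear} problem $v''+q(x)v=0$, where $q(x):=h(x,v(x))/v(x)$ on the set $\{v>0\}$ (suitably defined where $v=0$); the lower bound $h(x,s)\geq -k_{1}(x)s$ guarantees $q(x)\geq -k_{1}(x)\in L^{1}$, which is exactly the integrability needed to invoke the strong maximum principle / Hopf-type lemma for the one-dimensional operator. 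Concretely, I would show that if $v(\bar x)=0$ for some $\bar x\in\mathopen]0,L\mathclose[$, then $\bar x$ is an interior minimum, forcing $v'(\bar x)=0$, and then a Gronwall-type uniqueness argument for the linear equation with $L^{1}$ coefficient (using $v(\bar x)=v'(\bar x)=0$) forces $v\equiv0$ on a neighborhood and hence, by connectedness, on all of $I$, contradicting non-triviality. The same Gronwall/uniqueness argument applied at the endpoints yields $v'(0)>0$ and $v'(L)<0$: if, say, $v'(0)=0$, then together with $v(0)=0$ the initial conditions would force $v\equiv0$.

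The main obstacle will be handling the low regularity carefully: because $h$ is only $L^{1}$-Carath\'eodory, $v$ is merely $\mathcal{C}^{1}$ with absolutely continuous derivative, so pointwise differential inequalities must be replaced by integrated ones, and the coefficient $q(x)=h(x,v(x))/v(x)$ is only in $L^{1}_{\mathrm{loc}}$ on the positivity set. The delicate point is to make sense of the uniqueness/Hopf argument with an $L^{1}$ (rather than $L^{\infty}$) potential and possibly vanishing denominators; the clean way around this is to avoid dividing by $v$ altogether and instead argue directly from the one-sided bound $h(x,v(x))\geq -k_{1}(x)v(x)$, which gives the differential inequality $v''\leq k_{1}(x)v$ in the weak (integrated) sense on $\{v>0\}$, and to combine this with the concavity/convexity balance and a standard comparison with the zero solution. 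Once the integrated inequalities are set up, the positivity and the strict sign of the boundary derivatives follow from the usual contradiction arguments at an interior zero or a degenerate boundary slope.
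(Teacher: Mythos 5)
Your proposal is correct, but note that the paper itself gives no proof of this lemma: it declares the proof standard and refers to Ma\v{n}\'{a}sevich--Njoku--Zanolin, so the only comparison available is with that standard argument, which yours essentially reproduces. Part $(i)$ is complete as written: the concavity of $v$ on a maximal negativity interval $\mathopen]\alpha,\beta\mathclose[$, together with $v(\alpha)=v(\beta)=0$, immediately forces $v\geq 0$ there, a contradiction. For part $(ii)$, your final formulation is the right one, and it is worth emphasizing why: the usual Gronwall uniqueness step (estimating $w=v+|v'|$ from $v(\bar x)=v'(\bar x)=0$) needs the \emph{two-sided} bound $|h(x,s)|\leq K(x)s$, which your hypotheses do provide after absorbing an $\varepsilon$ into $k_{1},k_{2}$ (your displayed inequality forgets the $+1$ on the lower bound, a cosmetic slip). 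Your alternative route -- never dividing by $v$ and using only $h(x,s)\geq -K(x)s$, i.e.\ $v''\leq K(x)v$ a.e.\ where $0\leq v\leq\delta$ -- also works, but only because $v\geq 0$: integrating twice from a point where $v=v'=0$ gives $\max_{[\bar x,\bar x+\eta]}v \leq \eta\,\bigl(\int_{\bar x}^{\bar x+\eta}K\bigr)\max_{[\bar x,\bar x+\eta]}v$, and the $L^{1}$ smallness of $\int K$ on short intervals forces $v\equiv 0$ near $\bar x$; openness and closedness of $\{v=v'=0\}$ then propagate this to all of $I$. This one-sided version is in fact slightly sharper than what the lemma asks (the $\limsup$/$k_{2}$ hypothesis is never used in it), which is consistent with -- and goes a bit beyond -- the paper's own remark that a single bound $\limsup_{s\to 0^{+}}|h(x,s)|/s\leq k(x)$ suffices. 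The only phrase in your sketch with no real content is the appeal to a ``concavity/convexity balance and a standard comparison with the zero solution''; the smallness/Gronwall iteration just described is the actual mechanism, and once it is written out the interior positivity, $v'(0)>0$, and $v'(L)<0$ all follow exactly as you indicate.
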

The standard proof is omitted (see, for instance, \cite{ManasevichNjokuZanolin}).

We remark that Lemma~\ref{Maximum principle} is stated in a form which is useful for our applications.
For example, assertion $(ii)$ can be equivalently expressed in a simpler manner:
in effect if we only suppose that
there exists $k\in L^{1}(I,{\mathbb{R}}^{+})$ such that
\begin{equation*}
\limsup_{s\to0^{+}}\dfrac{|h(x,s)|}{s}\leq k(x), \quad \text{uniformly a.e. } x\in I,
\end{equation*}
the conclusion remains valid.

In view of Lemma~\ref{Maximum principle}, from now on, we suppose
\begin{itemize}
\item[$(f_{0}^{-})$]
there exists a function $q_{-}\in L^{1}(I,{\mathbb{R}}^{+})$ such that
\begin{equation*}
\liminf_{s\to0^{+}}\dfrac{f(x,s)}{s}\geq -q_{-}(x), \quad \text{uniformly a.e. } x\in I.
\end{equation*}
\end{itemize}

For the sequel we also need to introduce a suitable notation concerning the first eigenvalue of a linear problem with
non-negative weight.

Let $J:=\mathopen[x_{1},x_{2}\mathclose]\subseteq I$ be a compact subinterval and $q\in L^{1}(J,{\mathbb{R}}^{+})$
with $q\not\equiv 0$, namely $q>0$ a.e.~on a set of positive measure.
We denote by $\mu_{1}^{J}(q)$ the first (positive) eigenvalue of
\begin{equation*}
\begin{cases}
\, \varphi''+\mu q(x)\varphi=0 \\
\, \varphi|_{\partial J}=0.
\end{cases}
\end{equation*}
Sturm theory (in the Carath\'{e}odory setting) guarantees that $\mu_{1}^{J}(q)$ is a simple eigenvalue with an associated eigenfunction
$\varphi$ which
is positive in the interior of $J$ and such that $\varphi'(x_{1}) > 0 > \varphi'(x_{2})$.
As a notational convention, when $J = I = \mathopen[0,L\mathclose]$,
we denote the first eigenvalue simply by $\mu_{1}(q)$.

\medskip

As mentioned in the Introduction, our approach to the search of positive solutions of \eqref{two-pointBVP} is based on
Leray-Schauder topological degree. Accordingly, we transform problem \eqref{BVP} into an equivalent fixed point problem for
an associated operator which is the classical one defined by means of the Green function
for the operator $u\mapsto - u''$ with the two-point boundary conditions. Namely, we define
$\Phi\colon\mathcal{C}(I)\to\mathcal{C}(I)$ by
\begin{equation}\label{operator}
(\Phi u)(x):= \int_{I} G(x,\xi)\tilde{f}(\xi,u(\xi)) ~\!d\xi.
\end{equation}
The operator $\Phi$ is completely continuous in $\mathcal{C}(I)$ endowed with the $\sup$-norm $\|\cdot\|_{\infty}$.

\medskip

Our goal is to find multiple fixed points for $\Phi$ using a degree theoretic approach. To this aim, we present now some
technical lemmas which are stated in a form that is suitable to be subsequently applied for the computation of the
topological degree via homotopy procedures.

\begin{lemma}\label{lemmar0}
Let $f(x,s)$ satisfy $(f^{*})$ and $(f_{0}^{-})$. Suppose
\begin{itemize}
\item[$(f_{0}^{+})$]
there exists a measurable function
$q_{0}\in L^{1}(I,{\mathbb{R}}^{+})$ with $q_{0}\not\equiv0$,
such that
\begin{equation*}
\limsup_{s\to 0^{+}}\dfrac{f(x,s)}{s}\leq q_{0}(x), \quad \text{uniformly a.e. } x\in I,
\end{equation*}
and
\begin{equation*}
\mu_{1}(q_{0})>1.
\end{equation*}
\end{itemize}
Then there exists $r_{0}>0$ such that
every solution $u(x)\geq 0$ of the two-point BVP
\begin{equation}\label{BVP1}
\begin{cases}
\, u''+ \vartheta f(x,u)=0, \quad 0\leq\vartheta\leq1, \\
\, u(0)=u(L)=0
\end{cases}
\end{equation}
satisfying $\max_{x\in I}u(x)\leq r_{0}$ is such that $u(x) = 0$, for all $x\in I$.
\end{lemma}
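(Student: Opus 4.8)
The plan is to argue by contradiction, rescaling a hypothetical sequence of small nontrivial solutions to unit size and then contradicting the eigenvalue condition $\mu_{1}(q_{0})>1$ via the variational characterization of the first eigenvalue. To fix the scale, I would first use that $\mu_{1}(\cdot)$ depends continuously (and monotonically) on the weight in $L^{1}(I)$: since $\mu_{1}(q_{0})>1$, I can choose $\varepsilon>0$ so small that $\mu_{1}(q_{0}+\varepsilon)>1$ as well. For this $\varepsilon$, condition $(f_{0}^{+})$ (being a uniform $\limsup$) furnishes $\delta>0$ such that $f(x,s)/s\leq q_{0}(x)+\varepsilon$ for a.e.\ $x\in I$ and all $0<s\leq\delta$.

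Next, suppose the conclusion fails. Then for each $n$ (taking $r_{0}=1/n$) there exist $\vartheta_{n}\in\mathopen[0,1\mathclose]$ and a nontrivial nonnegative solution $u_{n}$ of \eqref{BVP1} with $0<\|u_{n}\|_{\infty}\leq 1/n$. Set $v_{n}:=u_{n}/\|u_{n}\|_{\infty}$, so that $v_{n}\geq0$, $v_{n}(0)=v_{n}(L)=0$ and $\|v_{n}\|_{\infty}=1$. Defining $\beta_{n}(x):=\vartheta_{n}f(x,u_{n}(x))/u_{n}(x)$ on $\{u_{n}>0\}$ and $\beta_{n}(x):=0$ on $\{u_{n}=0\}$ (consistent with $(f^{*})$), division of the equation by $\|u_{n}\|_{\infty}$ gives $v_{n}''+\beta_{n}(x)v_{n}=0$ a.e.\ on $I$. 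As soon as $n$ is large enough that $\|u_{n}\|_{\infty}\leq\delta$, the choice of $\delta$ together with $0\leq\vartheta_{n}\leq1$ and $q_{0}+\varepsilon\geq0$ yields $\beta_{n}(x)\leq q_{0}(x)+\varepsilon$ a.e., while $(f_{0}^{-})$ gives a matching $L^{1}$ lower bound, so that $\beta_{n}v_{n}^{2}\in L^{1}(I)$.

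Finally, since each $v_{n}\in H^{1}_{0}(I)$, I would multiply $v_{n}''+\beta_{n}v_{n}=0$ by $v_{n}$ and integrate by parts: the boundary terms vanish and I obtain $\int_{I}|v_{n}'|^{2}\,dx=\int_{I}\beta_{n}v_{n}^{2}\,dx\leq\int_{I}(q_{0}+\varepsilon)v_{n}^{2}\,dx$. Because $q_{0}+\varepsilon\geq\varepsilon>0$ and $v_{n}\not\equiv0$, the function $v_{n}$ is admissible in the Rayleigh-quotient characterization of $\mu_{1}(q_{0}+\varepsilon)$, whence $\mu_{1}(q_{0}+\varepsilon)\leq\int_{I}|v_{n}'|^{2}\,/\int_{I}(q_{0}+\varepsilon)v_{n}^{2}\leq1$, contradicting the choice of $\varepsilon$. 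This contradiction proves the existence of the required $r_{0}$.

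The main obstacle, and essentially the only delicate point, is that $q_{0}$ may vanish on part of $I$, so a naive pointwise sign comparison between $\beta_{n}$ and $\mu_{1}(q_{0})q_{0}$ is unavailable. The device that resolves it is exactly the passage to the strictly positive weight $q_{0}+\varepsilon$ together with the stability $\mu_{1}(q_{0}+\varepsilon)>1$ for small $\varepsilon$; equivalently, one may keep $q_{0}$ and absorb the extra term $\varepsilon\int_{I}v_{n}^{2}$ through the Poincaré inequality $\int_{I}v_{n}^{2}\leq(L/\pi)^{2}\int_{I}|v_{n}'|^{2}$, treating the degenerate case $\int_{I}q_{0}v_{n}^{2}=0$ separately (where it immediately forces $v_{n}\equiv0$). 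A secondary routine point is the legitimacy of the integration by parts, which is clear since solutions of \eqref{BVP1} are of class $W^{2,1}$ and vanish at the endpoints.
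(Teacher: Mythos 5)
Your proof is correct, but it runs on a genuinely different engine than the paper's. Both proofs share the same opening move: pick $\varepsilon>0$ with $\mu_{1}(q_{0}+\varepsilon)>1$ and use $(f_{0}^{+})$ to get a threshold below which $\vartheta f(x,u(x))\leq (q_{0}(x)+\varepsilon)u(x)$ a.e. After that the mechanisms diverge. The paper tests the equation against the \emph{first eigenfunction} $\varphi$ of the weight $q_{0}+\varepsilon$: Green's identity $0=\int_{0}^{L}\bigl[u''\varphi-u\varphi''\bigr]$ plus the sign information $\varphi>0$, $\hat{\mu}>1$ yields a strict inequality $0>0$, a Sturm-comparison contradiction. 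You instead test the equation against the \emph{solution itself}: the energy identity $\int_{I}|v'|^{2}=\int_{I}\beta v^{2}\leq\int_{I}(q_{0}+\varepsilon)v^{2}$ combined with the Rayleigh-quotient characterization of $\mu_{1}(q_{0}+\varepsilon)$ forces $\mu_{1}(q_{0}+\varepsilon)\leq 1$. What the paper's route buys is economy of prerequisites: it needs only the existence and sign properties of the first eigenfunction (Sturm theory in the Carath\'{e}odory setting, which the paper cites), and it works with a single solution, no sequence. What your route buys is that the eigenfunction never has to be produced --- only the infimum characterization of $\mu_{1}$ is used --- and your closing remark (absorbing $\varepsilon\int v^{2}$ via Poincar\'{e} and treating $\int q_{0}v^{2}=0$ separately) shows the argument can even be run without perturbing the weight. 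Two small points: your sequence-and-rescaling scaffolding is superfluous, since the Rayleigh quotient is scale-invariant you could apply the energy argument directly to any single nontrivial solution with $\max u\leq\delta$ (i.e.\ take $r_{0}=\delta$); and you are implicitly using the variational characterization of $\mu_{1}(q)$ for a weight that is merely $L^{1}$, which is legitimate here because $H^{1}_{0}(I)\hookrightarrow C(\overline{I})$ makes the form $\int_{I}q\,w^{2}$ finite, but it deserves a word, as does the fact (which you tacitly use via $(f^{*})$) that $v''=0$ a.e.\ on the set $\{u=0\}$ so the rescaled equation $v''+\beta v=0$ really holds a.e.
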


\begin{proof}
Let $\varepsilon>0$ be such that
\begin{equation*}
\hat{\mu}:=\mu_{1}(q_{0}+\varepsilon)>1
\end{equation*}
(see \cite{DalbonoZanolin},  \cite[p.~44]{deFigueiredo} or \cite{Zettl}).
By condition $(f_{0}^{+})$, there exists $r_{0}>0$ such that
\begin{equation*}
\dfrac{f(x,s)}{s}\leq q_{0}(x)+\varepsilon, \quad \text{a.e. } x\in I, \; \forall \, 0<s\leq r_{0}.
\end{equation*}

Let $\varphi$ be a positive eigenfunction of
\begin{equation*}
\begin{cases}
\, \varphi''+ \hat{\mu}[q_{0}(x)+\varepsilon]\varphi=0 \\
\, \varphi(0)= \varphi(L)=0.
\end{cases}
\end{equation*}
Then $\varphi(x)>0$, $\forall \, x\in \mathopen]0,L\mathclose[$, and $\varphi'(0)>0>\varphi'(L)$.

In order to prove the statement of our lemma, suppose, by contradiction, that there exist $\vartheta\in\mathopen[0,1\mathclose]$
and a solution $u(x)\geq 0$
of \eqref{BVP1} such that $\max_{x\in I} u(x) = r$ for some $0 < r \leq r_{0}$.
Notice that, by the choice of $r_{0}$, we have that
\begin{equation*}
\vartheta f(x,u(x)) \leq (q_{0}(x)+\varepsilon)u(x), \quad \text{a.e. } x\in I.
\end{equation*}

Using a Sturm comparison argument, we obtain
\begin{equation*}
\begin{aligned}
0  & = \bigl{[}u'(x)\varphi(x)-u(x)\varphi'(x)\bigr{]}_{x=0}^{x=L}
\\ & = \int_{0}^{L}\dfrac{d}{dx}\Bigl{[}u'(x)\varphi(x)-u(x)\varphi'(x)\Bigr{]} ~\!dx
\\ & = \int_{0}^{L}\Bigl{[}u''(x)\varphi(x)-u(x)\varphi''(x)\Bigr{]} ~\!dx
\\ & = \int_{0}^{L}\Bigl{[}-\vartheta f(x,u(x))\varphi(x)+u(x)\hat{\mu}(q_{0}(x)+\varepsilon)\varphi(x)\Bigr{]} ~\!dx
\\ & = \int_{0}^{L}\Bigl[(q_{0}(x)+\varepsilon)u(x)-\vartheta f(x,u(x))\Bigr]\varphi(x) ~\!dx
\\ &   \quad + (\hat{\mu} -1)\int_{0}^{L}(q_{0}(x)+\varepsilon)u(x)\varphi(x) ~\!dx
\\ & > 0,
\end{aligned}
\end{equation*}
a contradiction.
\end{proof}

A direct application of Lemma~\ref{lemmar0} permits to compute the degree on small neighborhoods of the origin.
Indeed, we have

\begin{lemma}\label{lemmar}
Let $f(x,s)$ satisfy $(f^{*})$, $(f_{0}^{-})$ and $(f_{0}^{+})$.
Then there exists $r_{0}>0$ such that
\begin{equation*}
deg(Id-\Phi,B(0,r),0)=1, \quad \forall \, 0<r\leq r_{0}.
\end{equation*}
\end{lemma}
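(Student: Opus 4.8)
The plan is to take the radius $r_{0}$ furnished by Lemma~\ref{lemmar0} and to show that the linear homotopy $\vartheta \mapsto \vartheta\Phi$, $\vartheta\in\mathopen[0,1\mathclose]$, joining $\Phi$ to the null operator is admissible on $\partial B(0,r)$ for every $0<r\leq r_{0}$. Once this is established, homotopy invariance of the Leray--Schauder degree reduces the computation to that of the identity on a ball containing the origin, giving the value $1$.

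First I would unwind what a fixed point of $\vartheta\Phi$ means. By the definition \eqref{operator} of $\Phi$ through the Green function, the equation $u=\vartheta\Phi u$ is equivalent to the two-point problem $u''+\vartheta\tilde{f}(x,u)=0$, $u(0)=u(L)=0$. Since $\tilde{f}(x,s)=0$ for $s\leq 0$, the function $h(x,s):=\vartheta\tilde{f}(x,s)$ satisfies $h(x,s)\geq 0$ for a.e.~$x\in I$ and all $s\leq 0$, so Lemma~\ref{Maximum principle}$(i)$ forces every such solution to be non-negative on $I$. On the set where $u\geq 0$ one has $\tilde{f}(x,u)=f(x,u)$, and therefore any fixed point of $\vartheta\Phi$ is a non-negative solution of \eqref{BVP1}.

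Next I would verify admissibility of the homotopy. Suppose, for some $\vartheta\in\mathopen[0,1\mathclose]$, that $u\in\partial B(0,r)$ satisfies $u=\vartheta\Phi u$, so that $\|u\|_{\infty}=r$ with $0<r\leq r_{0}$. By the previous step $u$ is a non-negative solution of \eqref{BVP1} with $\max_{x\in I}u(x)=r\leq r_{0}$. Lemma~\ref{lemmar0} then yields $u(x)=0$ for all $x\in I$, contradicting $\|u\|_{\infty}=r>0$. Hence $Id-\vartheta\Phi$ does not vanish on $\partial B(0,r)$ for any $\vartheta\in\mathopen[0,1\mathclose]$, and the homotopy invariance property gives
\begin{equation*}
deg(Id-\Phi,B(0,r),0)=deg(Id,B(0,r),0)=1,
\end{equation*}
the last equality holding because $0\in B(0,r)$.

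I do not expect a genuine obstacle here: all the analytic difficulty has already been absorbed into Lemma~\ref{lemmar0}, and the present statement is essentially its degree-theoretic packaging. The only two points requiring care are, first, that one must pass through the maximum principle (Lemma~\ref{Maximum principle}$(i)$) so as to remain in the non-negative cone, where $\tilde{f}$ coincides with $f$ and Lemma~\ref{lemmar0} is applicable; and, second, that the conclusion is uniform in $r$, i.e.\ the \emph{same} $r_{0}$ works for every radius $0<r\leq r_{0}$, which is exactly how Lemma~\ref{lemmar0} is phrased.
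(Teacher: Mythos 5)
Your proposal is correct and follows essentially the same argument as the paper: both use the homotopy $\vartheta\Phi$, $\vartheta\in\mathopen[0,1\mathclose]$, invoke the maximum principle (Lemma~\ref{Maximum principle}) to reduce to non-negative solutions of \eqref{BVP1}, apply Lemma~\ref{lemmar0} to exclude fixed points on $\partial B(0,r)$ for $0<r\leq r_{0}$, and conclude by homotopy invariance that the degree equals $deg(Id,B(0,r),0)=1$. The only cosmetic difference is that the paper phrases the dichotomy as ``either $u=0$ or $u>0$ on $\mathopen]0,L\mathclose[$'' via part $(ii)$ of the maximum principle, while you use only part $(i)$ (non-negativity), which suffices since Lemma~\ref{lemmar0} requires nothing more.
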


\begin{proof}
Let $r_{0}$ be as in Lemma~\ref{lemmar0} and let us fix $r\in\mathopen]0,r_{0}\mathclose]$. If $u \in \mathcal{C}(I)$ satisfies
$u = \vartheta\Phi(u)$, for some  $0\leq \vartheta \leq 1$,
then $u$ is a solution of the equation $u'' + \vartheta\tilde{f}(x,u) =0$ with $u(0) = u(L) = 0$.
Now, either $u=0$, or (according to Lemma~\ref{Maximum principle}) $u(x) > 0$ for each $x\in\mathopen]0,L\mathclose[$
and therefore  $u$ is a solution of \eqref{BVP1}. Hence, Lemma~\ref{lemmar0} and the choice of $r$
imply that $\|u\|_{\infty} \neq r$. This, in turn, implies that
\begin{equation*}
u\neq \vartheta \Phi(u), \quad \forall \, \vartheta\in\mathopen[0,1\mathclose], \; \forall \, u\in \partial B(0,r).
\end{equation*}
By the homotopic invariance property of the topological degree, we conclude that
$deg(Id-\Phi,B(0,r),0)=deg(Id,B(0,r),0)=1$.
\end{proof}

\medskip

As a next step, we give a result which will be used to compute the degree on large balls. It follows from Lemma~\ref{lemmaRJ} below,
where we assume suitable conditions on $f(x,s)/s$ for $s > 0$ and large. Notice that our assumptions are ``local'' (in the spirit
of \cite{deFigueiredoGossezUbilla} and \cite{ObersnelOmari}), in the sense that we do not require their validity on the whole domain.

\begin{lemma}\label{lemmaRJ}
Suppose there exists a closed interval $J \subseteq I$ such that
\begin{equation*}
f(x,s)\geq 0, \quad \text{a.e. } x\in J, \; \forall \, s\geq0;
\end{equation*}
and there is a measurable function $q_{\infty}\in L^{1}(J,{\mathbb{R}}^{+})$ with $q_{\infty}\not\equiv0$, such that
\begin{equation}\label{condinfty}
\liminf_{s\to+\infty}\dfrac{f(x,s)}{s}\geq q_{\infty}(x), \quad \text{uniformly a.e. } x\in J.
\end{equation}
Suppose
\begin{equation*}
\mu_{1}^{J}(q_{\infty})<1.
\end{equation*}
Then there exists $R_{J}>0$ such that
for each Carath\'{e}odory function $g\colon I \times{\mathbb{R}}^{+}\to {\mathbb{R}}$ with
\begin{equation*}
g(x,s)\geq f(x,s), \quad \text{a.e. } x\in J, \; \forall \, s\geq0,
\end{equation*}
every solution $u(x)\geq0$ of the two-point BVP
\begin{equation}\label{BVP2}
\begin{cases}
\, u''+g(x,u)=0 \\
\, u(0)=u(L)=0
\end{cases}
\end{equation}
satisfies $\max_{x\in J}u(x)<R_{J}$.
\end{lemma}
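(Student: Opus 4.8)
The plan is to argue by contradiction using a Sturm-type comparison against the first eigenfunction on $J$, in the same spirit as Lemma~\ref{lemmar0}, but now exploiting the superlinear lower bound \eqref{condinfty} at infinity rather than the sublinear upper bound at zero. First I would fix $\varepsilon > 0$ small enough that $\mu_{1}^{J}(q_{\infty} - \varepsilon) < 1$ still holds; this is possible by the continuous dependence of the first eigenvalue on the weight (the same references cited in Lemma~\ref{lemmar0}). Let $\psi$ be the positive eigenfunction associated to $\mu_{1}^{J}(q_{\infty}-\varepsilon)$ on $J = \mathopen[x_{1},x_{2}\mathclose]$, so that $\psi''+\mu_{1}^{J}(q_{\infty}-\varepsilon)(q_{\infty}(x)-\varepsilon)\psi = 0$, with $\psi > 0$ on the interior of $J$ and $\psi'(x_{1}) > 0 > \psi'(x_{2})$. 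By \eqref{condinfty} there is a threshold $d > 0$ such that $f(x,s) \geq (q_{\infty}(x)-\varepsilon)\,s$ for a.e.\ $x \in J$ and all $s \geq d$.

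The key geometric observation is that any $u \geq 0$ solving \eqref{BVP2} is concave on $J$, since $u'' = -g(x,u) \leq -f(x,u) \leq 0$ there (using $g \geq f \geq 0$ on $J$). A concave non-negative function on $\mathopen[x_1,x_2\mathclose]$ whose maximum over $J$ is large must in fact stay above the threshold $d$ on a fixed proportion of a suitable subinterval; more precisely, if $\max_{J} u$ is large, then $u(x) \geq d$ on a central subinterval whose length is controlled from below. I would choose $R_J$ so large that, whenever $\max_{x\in J}u(x) \geq R_J$, the set $\{x \in J : u(x) \geq d\}$ contains a subinterval $J_\varepsilon = \mathopen[y_1,y_2\mathclose]$ on which $q_{\infty}-\varepsilon$ is not identically zero and on which $\psi$ may be compared with $u$. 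On this subinterval the differential inequality $u'' = -g(x,u) \leq -f(x,u) \leq -(q_{\infty}(x)-\varepsilon)u$ holds pointwise.

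Then I would run the Sturm identity on $J_\varepsilon$: integrating $\frac{d}{dx}[u'\psi - u\psi']$ and substituting the equations for $u$ and $\psi$ yields, after the same manipulation as in Lemma~\ref{lemmar0}, a strict sign contradiction coming from the factor $(1 - \mu_{1}^{J}(q_{\infty}-\varepsilon))$, together with the positivity of the boundary terms at $y_1,y_2$. Because $\psi$ and $u$ are both positive on $J_\varepsilon$ and $\psi$ has the correct boundary derivative signs on the endpoints of $J$, the boundary contributions $[u'\psi - u\psi']$ do not vanish but have a definite sign, which is incompatible with the integral being of the opposite sign.

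The main obstacle I anticipate is the bookkeeping that turns ``the maximum over $J$ is at least $R_J$'' into ``$u \geq d$ on a fixed subinterval where the comparison can be run''. This requires a quantitative concavity estimate: a concave function pinned to zero (or bounded) outside $J$ but large inside must exceed any given level $d$ on a subinterval of guaranteed length, and one must ensure this subinterval meets the support of $q_\infty$ so that $\mu_1$ of the restricted weight is finite and the eigenfunction argument applies. Making $R_J$ depend only on $d$, $\varepsilon$, $\mathrm{meas}(J)$ and $q_\infty$ (and not on the particular $u$ or $g$) is the delicate point, since the bound must be uniform over all admissible $g \geq f$.
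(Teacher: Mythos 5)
Your overall strategy (contradiction, concavity plateau, Sturm comparison against a first eigenfunction with eigenvalue below $1$) is the right one and close in spirit to the paper's proof, but the decisive step fails as written. You take $\psi$ to be the eigenfunction on the \emph{whole} interval $J=\mathopen[x_1,x_2\mathclose]$ and then integrate the Sturm identity only over the plateau $J_\varepsilon=\mathopen[y_1,y_2\mathclose]$. The boundary contribution is then
\begin{equation*}
\bigl[u'\psi-u\psi'\bigr]_{y_1}^{y_2}
= u'(y_2)\psi(y_2)-u(y_2)\psi'(y_2)-u'(y_1)\psi(y_1)+u(y_1)\psi'(y_1),
\end{equation*}
evaluated at \emph{interior} points of $J$, where $\psi>0$ and neither $\psi'$ nor $u'$ has a controlled sign; for instance $u'(y_1)$ can be huge and positive (the solution climbs steeply from the level $d$ to its very large maximum), so the term $-u'(y_1)\psi(y_1)$ can be arbitrarily negative. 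The definite sign of the boundary terms in Lemma~\ref{lemmar0} comes precisely from the eigenfunction \emph{vanishing at the endpoints of the integration interval} together with $u\geq 0$; once you integrate over a strict subinterval this structure is lost, and no contradiction follows --- at best (when $q_\infty-\varepsilon\geq 0$ there) you learn that $u'\psi-u\psi'$ is nonincreasing across $J_\varepsilon$, which is perfectly consistent. A smaller issue: $q_\infty-\varepsilon$ may be negative on part of $J$, so $\mu_1^J(q_\infty-\varepsilon)$ is not defined in the paper's framework of weights in $L^1(J,{\mathbb{R}}^+)$; you should work with the truncation $(q_\infty-\varepsilon)^+$, noting that $f\geq 0$ on $J$ gives $f(x,s)\geq (q_\infty(x)-\varepsilon)^+ s$ for $s\geq d$.

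There are two ways to close the gap. The paper's way: keep both the eigenfunction and the integration on all of $J$, so the boundary terms reduce to $-u(x_2)\varphi'(x_2)+u(x_1)\varphi'(x_1)\geq 0$; use non-negative approximating weights $q_n\to q_\infty$ with $f(x,s)\geq q_n(x)s$ for $s\geq n$ and eigenvalues $\nu_n\leq 1-\varepsilon$; and control the integrand on the bad set where $u<N$ by the Carath\'{e}odory bound $\gamma_N$, observing via the concavity estimate $u(x)\geq \frac{\max_J u}{x_2-x_1}\min\{x-x_1,x_2-x\}$ that this bad set has measure tending to $0$ along a sequence of solutions with unbounded maxima; a dominated-convergence passage to the limit then yields $0\geq \varepsilon N\int_J q_N\varphi>0$. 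Alternatively, and closer to your plan: fix $\eta,\varepsilon>0$ so small that $\mu_1^{J'}\bigl((q_\infty-\varepsilon)^+|_{J'}\bigr)<1$ for $J':=\mathopen[x_1+\eta,x_2-\eta\mathclose]$ (this uses continuity and monotonicity of the principal eigenvalue with respect to the domain and the weight), let $\phi$ be the eigenfunction of $J'$ vanishing at $\partial J'$ with $\phi'(x_1+\eta)>0>\phi'(x_2-\eta)$, and choose $R_J\geq d(x_2-x_1)/\eta$ so that, by concavity, $\max_J u\geq R_J$ forces $u\geq d$ on $J'$; running the Sturm identity on $J'$ against $\phi$ now gives boundary terms $\geq 0$ and an integral $\leq -(1-\mu_1^{J'})\,d\int_{J'}(q_\infty-\varepsilon)^+\phi<0$, a contradiction uniform in $g\geq f$. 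Either repair works; as written, the claimed positivity of the boundary terms at $y_1,y_2$ is the missing, and in fact false, step.
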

We stress that the constant $R_{J}$ does not depend on the function $g(x,s)$.
\begin{proof}
Just to fix a notation along the proof, we set $J:=\mathopen[x_{1},x_{2}\mathclose]$.
By contradiction, suppose there is not a constant $R_{J}$ with those properties. So, for all $n>0$
there exists $\tilde{u}_{n}\geq0$ solution of \eqref{BVP2} with $\max_{x\in J}\tilde{u}_{n}(x)=:\hat{R}_{n}>n$.

Let $q_{n}(x)$ be a monotone nondecreasing sequence of non-negative measurable functions such that
\begin{equation*}
f(x,s) \geq q_{n}(x) s, \quad \text{a.e. } x\in J, \; \forall \, s \geq n,
\end{equation*}
and $q_{n}\to q_{\infty}$ uniformly almost everywhere in $J$.
The existence of such a sequence comes from condition \eqref{condinfty}.

Fix $\varepsilon<(1-\mu_{1}^{J}(q_{\infty}))/2$. Hence, there exists an integer $N >0$ such that $q_{n}\not\equiv 0$ for each $n\geq N$ and, moreover,
\begin{equation*}
\nu_{n}:=\mu_{1}^{J}(q_{n}) \leq 1-\varepsilon , \quad \forall \, n\geq N.
\end{equation*}
Now we fix $N$ as above and denote by
$\varphi$ the positive eigenfunction of
\begin{equation*}
\begin{cases}
\, \varphi''+ \nu_{N} q_{N}(x)\varphi=0 \\
\, \varphi(x_{1})=\varphi(x_{2})=0 ,
\end{cases}
\end{equation*}
with $\|\varphi\|_{\infty}=1$. Then $\varphi(x)>0$, $\forall \, x\in\mathopen]x_{1},x_{2}\mathclose[$,
and $\varphi'(x_{1})>0>\varphi'(x_{2})$.

For each $n\geq N$, let $J'_{n}\subseteq J$ be the maximal closed interval, such that
\begin{equation*}
\tilde{u}_{n}(x)\geq N, \quad \forall \, x\in J'_{n}.
\end{equation*}
By the concavity of the solution in the interval $J$ and the definition of $J'_{n}$,
we also have that
\begin{equation*}
 \tilde{u}_{n}(x)\leq N, \quad \forall \, x\in J\setminus J'_{n}.
\end{equation*}
Another consequence of the concavity of $\tilde{u}_{n}$ on $J$ ensures that
\begin{equation*}
\tilde{u}_{n}(x)\geq \dfrac{\hat{R}_{n}}{x_{2}-x_{1}}\min\{x-x_{1},x_{2}-x\}, \quad \forall x\in J,
\end{equation*}
(see \cite[p.~420]{GaudenziHabetsZanolin3} for a similar estimate).
Hence, if we take $n\geq 2N$, we find that $\tilde{u}_{n}(x)\geq N$, for all $x$ in the well-defined closed interval
\begin{equation*}
A_{n}:=\biggl[x_{1}+\dfrac{N}{\hat{R}_{n}}(x_{2}-x_{1}),x_{2}-\dfrac{N}{\hat{R}_{n}}(x_{2}-x_{1})\biggr]\subseteq J'_{n}.
\end{equation*}
By construction, ${\it meas}(J\setminus J'_{n})\leq {\it meas}(J\setminus A_{n})\to 0$ as $n\to \infty$.

Using a Sturm comparison argument, for each $n\geq N$, we obtain
\begin{equation*}
\begin{aligned}
0  & \geq \tilde{u}_{n}(x_{2})\varphi'(x_{2})-\tilde{u}_{n}(x_{1})\varphi'(x_{1})
 = \Bigl{[}\tilde{u}_{n}(x)\varphi'(x)-\tilde{u}'_{n}(x)\varphi(x)\Bigr{]}_{x=x_{1}}^{x=x_{2}}
\\ & = \int_{x_{1}}^{x_{2}}\dfrac{d}{dx}\Bigl{[}\tilde{u}_{n}(x)\varphi'(x)-\tilde{u}'_{n}(x)\varphi(x)\Bigr{]} ~\!dx
\\ & = \int_{J}\Bigl{[}\tilde{u}_{n}(x)\varphi''(x)-\tilde{u}''_{n}(x)\varphi(x)\Bigr{]} ~\!dx
\\ & = \int_{J}\Bigl{[}-\tilde{u}_{n}(x)\nu_{N}q_{N}(x)\varphi(x)+g(x,\tilde{u}_{n}(x))\varphi(x)\Bigr{]} ~\!dx
\\ & =  \int_{J}\Bigl{[}g(x,\tilde{u}_{n}(x))-\nu_{N}q_{N}(x)\tilde{u}_{n}(x)\Bigr{]}\varphi(x) ~\!dx
\\ & \geq \int_{J}\Bigl{[}f(x,\tilde{u}_{n}(x))-\nu_{N}q_{N}(x)\tilde{u}_{n}(x)\Bigr{]}\varphi(x) ~\!dx
\\ & = \int_{J'_{n}}\Bigl{[}f(x,\tilde{u}_{n}(x))-q_{N}(x)\tilde{u}_{n}(x)\Bigr{]}\varphi(x) ~\!dx +
(1-\nu_{N})\int_{J'_{n}}q_{N}(x)\tilde{u}_{n}(x)\varphi(x) ~\!dx
\\ & \quad + \int_{J\setminus J'_{n}}\Bigl{[}f(x,\tilde{u}_{n}(x))-\nu_{N}q_{N}(x)\tilde{u}_{n}(x)\Bigr{]}\varphi(x) ~\!dx.
\end{aligned}
\end{equation*}
Recalling that
\begin{equation*}
f(x,s)\geq q_{N}(x) s, \quad \text{a.e. } x\in J, \; \forall \, s\geq N,
\end{equation*}
we know that
\begin{equation*}
f(x,\tilde{u}_{n}(x))-q_{N}(x)\tilde{u}_{n}(x)\geq 0, \quad \text{a.e. } x\in J'_{n}, \; \forall \, n\geq N.
\end{equation*}
Then, using the Carath\'{e}odory assumption, which implies that
\begin{equation*}
|f(x,s)|\leq \gamma_{N}(x), \quad \text{a.e. } x\in J, \; \forall \, 0\leq s \leq N,
\end{equation*}
where $\gamma_{N}$ is a suitably non-negative integrable function, we obtain
\begin{equation*}
\begin{aligned}
0  & \geq
\int_{J'_{n}}\Bigl{[}f(x,\tilde{u}_{n}(x))-q_{N}(x)\tilde{u}_{n}(x)\Bigr{]}\varphi(x) ~\!dx +
(1-\nu_{N})\int_{J'_{n}}q_{N}(x)\tilde{u}_{n}(x)\varphi(x) ~\!dx
\\ & \quad + \int_{J\setminus J'_{n}}\Bigl{[}f(x,\tilde{u}_{n}(x))-\nu_{N}q_{N}(x)\tilde{u}_{n}(x)\Bigr{]}\varphi(x) ~\!dx
\\ & \geq
\varepsilon N \int_{J'_{n}}q_{N}(x)\varphi(x) ~\!dx + \int_{J\setminus J'_{n}}\Bigl{[}-\gamma_{N}(x)-N\nu_{N}q_{N}(x)\Bigr{]} ~\!dx
\\ & =
\varepsilon N \int_{J}q_{N}(x)\varphi(x) ~\!dx -  \varepsilon N\int_{J\setminus J'_{n}} q_{N}(x)\varphi(x) ~\!dx
\\ & \quad - \int_{J\setminus J'_{n}}\Bigl{[}\gamma_{N}(x) + N\nu_{N}q_{N}(x)\Bigr{]} ~\!dx.
\end{aligned}
\end{equation*}
Passing to the limit as $n\to \infty$ and using the dominated convergence theorem, we obtain
\begin{equation*}
0 \geq \varepsilon N \int_{J}q_{N}(x)\varphi(x) ~\!dx > 0,
\end{equation*}
a contradiction.
\end{proof}

An immediate consequence of Lemma~\ref{lemmaRJ} is the following result (Lemma~\ref{lemmaR}), where we assume a
specific sign condition on the function $f(x,s)$. Such condition will play an important role in all our applications.

\begin{itemize}
\item[$(H)$] \textit{
Suppose that there exist $n\geq 1$ intervals $I_{1},\ldots,I_{n}$, closed and pairwise disjoint, such that
\begin{equation*}
\begin{aligned}
   & f(x,s) \geq 0, \quad \mbox{for a.e. }  x\in \bigcup_{i=1}^{n} I_{i} \mbox{ and for all } s\geq 0;
\\ & f(x,s) \leq 0, \quad \mbox{for a.e. }  x\in I\setminus \bigcup_{i=1}^{n} I_{i} \mbox{ and for all } s\geq 0.
\end{aligned}
\end{equation*}
}
\end{itemize}
If $n=1$, condition $(H)$ simply requires that there exists a compact subinterval $J\subseteq I$ such that for each $s\geq 0$ it holds that
$f(x,s)\geq 0$ for a.e.~$x\in J$ and $f(x,s) \leq 0$ for a.e.~$x\in I\setminus J$ (the possibility that $J = I$ is not excluded).

\begin{lemma}\label{lemmaR}
Let $f(x,s)$ satisfy $(H)$ as well as $(f^{*})$ and $(f_{0}^{-})$. Assume also
\begin{itemize}
\item[$(f_{\infty})$] for all $i=1,\ldots,n$ there exists a measurable function $q_{\infty}^{i}\in L^{1}(I_{i},{\mathbb{R}}^{+})$
with $q_{\infty}^{i}\not\equiv0$, such that
\begin{equation*}
\liminf_{s\to+\infty}\dfrac{f(x,s)}{s}\geq q_{\infty}^{i}(x), \quad \text{uniformly a.e. } x\in I_{i},
\end{equation*}
and
\begin{equation*}
\mu_{1}^{I_{i}}(q_{\infty}^{i})<1.
\end{equation*}
\end{itemize}
Then there exists $R^{*}>0$ such that
\begin{equation*}
deg(Id-\Phi,B(0,R),0)=0, \quad \forall \, R\geq R^{*}.
\end{equation*}
\end{lemma}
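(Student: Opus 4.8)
The plan is to show that a nonnegative forcing term, added along a homotopy, pushes every solution to infinity, so that the Leray--Schauder degree on a sufficiently large ball must be zero. Concretely, I would fix a function $\psi\in L^{1}(I,{\mathbb{R}}^{+})$ with $\psi\not\equiv0$ and $\operatorname{supp}\psi\subseteq I_{1}$ (for instance $\psi=\mathbf{1}_{I_{1}}$), and set $v_{0}(x):=\int_{I}G(x,\xi)\psi(\xi)\,d\xi$, so that $v_{0}\in\mathcal{C}(I)$, $v_{0}\geq0$ and $v_{0}\not\equiv0$. For $\alpha\geq0$ consider the equation $u=\Phi u+\alpha v_{0}$, which is equivalent to the two-point problem $u''+\tilde{f}(x,u)+\alpha\psi(x)=0$, $u(0)=u(L)=0$. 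Since $\tilde{f}(x,s)+\alpha\psi(x)\geq0$ for $s\leq0$, assertion $(i)$ of Lemma~\ref{Maximum principle} guarantees that every such $u$ is non-negative on $I$.

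The central step is a uniform (in $\alpha$) a priori bound. First I would bound $u$ on each positivity interval: fix $i\in\{1,\ldots,n\}$ and apply Lemma~\ref{lemmaRJ} on $J=I_{i}$ to the Carath\'{e}odory function $g(x,s):=\tilde{f}(x,s)+\alpha\psi(x)$. On $I_{i}$ we have $f(x,s)\geq0$ by $(H)$, hypothesis $(f_{\infty})$ provides $q_{\infty}^{i}$ with $\mu_{1}^{I_{i}}(q_{\infty}^{i})<1$, and for $s\geq0$ one has $g(x,s)=f(x,s)+\alpha\psi(x)\geq f(x,s)$; hence $\max_{I_{i}}u<R_{I_{i}}$, where $R_{I_{i}}$ is the constant from Lemma~\ref{lemmaRJ}, which is \emph{independent of $g$} and therefore of $\alpha$. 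Next, on every connected component of $I\setminus\bigcup_{i}I_{i}$ we have $\psi\equiv0$ and, by $(H)$ together with $u\geq0$, $u''=-f(x,u)\geq0$ there; thus $u$ is convex on each such subinterval, so its maximum is attained at an endpoint, each endpoint lying in some $I_{i}$ (where $u<R_{I_{i}}$) or being $0$ or $L$ (where $u=0$). Setting $R^{*}:=\max_{i}R_{I_{i}}$, I conclude that $\|u\|_{\infty}=\max_{I}u<R^{*}$ for \emph{every} solution of $u=\Phi u+\alpha v_{0}$ and every $\alpha\geq0$.

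Finally I would invoke the standard degree-theoretic criterion (cf.~the Appendix, Section~\ref{section6}). Take any $R\geq R^{*}$. By the bound just established, $u\neq\Phi u+\alpha v_{0}$ for all $u\in\partial B(0,R)$ and all $\alpha\geq0$, so the homotopy $(\alpha,u)\mapsto\Phi u+\alpha v_{0}$ has no fixed point on $\partial B(0,R)$ and the degree $\deg(Id-\Phi-\alpha v_{0},B(0,R),0)$ is constant in $\alpha\geq0$. Moreover, if $u=\Phi u+\alpha v_{0}$ then $\alpha\|v_{0}\|_{\infty}=\|u-\Phi u\|_{\infty}\leq R+\sup_{w\in\overline{B}(0,R)}\|\Phi w\|_{\infty}$, which is finite because $\Phi$ is completely continuous; hence for $\alpha$ large enough the map $Id-\Phi-\alpha v_{0}$ has no zero in $\overline{B}(0,R)$ and its degree is $0$. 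By homotopy invariance, $\deg(Id-\Phi,B(0,R),0)=0$ for all $R\geq R^{*}$, as claimed.

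The delicate point, and the reason the argument works, is that the local a priori bounds supplied by Lemma~\ref{lemmaRJ} must be \emph{uniform in the homotopy parameter} $\alpha$; this is precisely the $g$-independence of $R_{J}$ emphasized in that lemma. Once this is granted, the convexity of solutions on the intervals of negativity of the weight upgrades the bounds on the $I_{i}$ to a global bound on $I$, and the remainder is the classical "degree vanishes under a coercive forcing" computation.
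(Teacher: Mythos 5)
Your proposal is correct and follows essentially the same route as the paper: perturb $\Phi$ by a nonnegative forcing term $\alpha v$, get $\alpha$-uniform bounds on each $I_i$ from the $g$-independence of $R_{I_i}$ in Lemma~\ref{lemmaRJ}, upgrade them to a global bound via convexity on the negativity set, and conclude that the degree vanishes. The only (immaterial) differences are that the paper takes the forcing $\mathbbm{1}_{A}$ with $A=\bigcup_{i=1}^{n}I_{i}$ rather than supported in $I_{1}$ alone, and that it invokes the second part of Theorem~\ref{deFigueiredo} where you inline its short proof (homotopy invariance plus the coercivity bound on $\alpha$).
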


\begin{proof}
Define
\begin{equation*}
R^{*}:=\max_{i=1,\ldots,n}R_{I_{i}}>0,
\end{equation*}
with $R_{I_{i}}>0$ defined as in Lemma~\ref{lemmaRJ}. Let us also fix a radius $R\geq R^{*}$.

We denote by $\mathbbm{1}_{A}$ the characteristic function of the set $A:= \bigcup_{i=1}^{n} I_{i}$.
We set $v(x):=\int_{I}G(x,s)\mathbbm{1}_{A}(s) ~\!ds$ and we consider in $\mathcal{C}(I)$ the operator equation
\begin{equation*}
u =\Phi(u) + \alpha v,\quad \mbox{for } \alpha \geq 0.
\end{equation*}
Clearly, any nontrivial solution $u$ of the above equation is a solution of
$u'' + \tilde{f}(x,u) + \alpha \mathbbm{1}_{A}(x) = 0$ with $u(0) = u(L) = 0$. By the first part of Lemma~\ref{Maximum principle}
we know that $u(x)\geq 0$ for all $x\in I$. Hence, $u$ is a non-negative solution of
\eqref{BVP2} with
\begin{equation*}
g(x,s) = f(x,s) + \alpha \mathbbm{1}_{A}(x),
\end{equation*}
for $\alpha \geq 0$.
By definition, we have that $g(x,s) \geq f(x,s)$ for a.e.~$x\in A$ and for all $s\geq 0$, and also $g(x,s) = f(x,s) \leq 0$
for a.e.~$x\in I\setminus A$ and for all $s\geq 0$.
By the convexity of the solutions of \eqref{BVP2} in the intervals of $I\setminus A$, we obtain
\begin{equation*}
\max_{x\in I}u(x)=\max_{x\in A}u(x)
\end{equation*}
and, as an application of Lemma~\ref{lemmaRJ} on each of the intervals $I_{i}$, we conclude that
\begin{equation*}
\|u\|_{\infty} < R^{*} \leq R.
\end{equation*}
As a consequence,
\begin{equation*}
u \neq \Phi(u) + \alpha v,\quad \mbox{for all } u\in \partial B(0,R) \mbox{ and } \alpha \geq 0,
\end{equation*}
and thus the thesis follows from the second part of Theorem~\ref{deFigueiredo}.
\end{proof}

\section{Some existence results}\label{section3}

An immediate consequence of Lemma~\ref{lemmar} and Lemma~\ref{lemmaR} is the following existence result which
generalizes \cite[Theorem~4.1]{GaudenziHabetsZanolin3}.

\begin{theorem}\label{Theorem4.1GHZ}
Let $f(x,s)$ satisfy $(f^{*})$, $(f_{0}^{-})$, $(f_{0}^{+})$ and $(H)$ with $(f_{\infty})$.
Then there exists at least a positive solution of the two-point BVP \eqref{two-pointBVP}.
\end{theorem}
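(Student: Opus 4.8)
The plan is to combine the two degree computations already in hand into an annulus argument and then upgrade the resulting nontrivial fixed point to a genuinely positive solution by means of the maximum principle. First I would invoke Lemma~\ref{lemmar}, whose hypotheses $(f^{*})$, $(f_{0}^{-})$, $(f_{0}^{+})$ are exactly those assumed here, to obtain a radius $r_{0}>0$ with $deg(Id-\Phi,B(0,r),0)=1$ for every $0<r\leq r_{0}$. Next I would invoke Lemma~\ref{lemmaR}, whose hypotheses $(H)$, $(f^{*})$, $(f_{0}^{-})$ and $(f_{\infty})$ are also all assumed, to obtain $R^{*}>0$ with $deg(Id-\Phi,B(0,R),0)=0$ for every $R\geq R^{*}$.

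Fix then $r\in\mathopen]0,r_{0}\mathclose]$ and $R\geq R^{*}$ with $r<R$ (always possible since $r_{0}>0$). Both degrees above are well defined, so in particular $\Phi$ has no fixed point on $\partial B(0,r)$, and the additivity/excision property of the Leray--Schauder degree yields
\[
deg(Id-\Phi,B(0,R),0)=deg(Id-\Phi,B(0,r),0)+deg(Id-\Phi,B(0,R)\setminus\overline{B(0,r)},0),
\]
which rearranges to $deg(Id-\Phi,B(0,R)\setminus\overline{B(0,r)},0)=0-1=-1\neq0$. By the solution property of the degree there exists $u\in B(0,R)\setminus\overline{B(0,r)}$ with $u=\Phi(u)$, hence $r<\|u\|_{\infty}<R$ and in particular $u\not\equiv0$.

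It remains to translate this fixed point into a positive solution. By the definition \eqref{operator} of $\Phi$, the function $u$ solves $u''+\tilde{f}(x,u)=0$ with $u(0)=u(L)=0$. Since $\tilde{f}(x,s)=0\geq0$ for $s\leq0$, part $(i)$ of Lemma~\ref{Maximum principle} gives $u\geq0$ on $I$; and since $u$ is nontrivial, the behaviour of $f(x,s)/s$ as $s\to0^{+}$ controlled from below by $(f_{0}^{-})$ and from above by $(f_{0}^{+})$ lets me apply part $(ii)$ of the same lemma to $h=\tilde{f}$, concluding $u(x)>0$ for all $x\in\mathopen]0,L\mathclose[$. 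Being non-negative, $u$ solves the original problem \eqref{two-pointBVP}, which finishes the argument. I do not expect a serious obstacle here: the genuinely delicate estimates are already packaged inside Lemmas~\ref{lemmar} and~\ref{lemmaR}, and the only point requiring a little care is verifying that the strict-positivity hypotheses of Lemma~\ref{Maximum principle}$(ii)$ are satisfied by $\tilde{f}$, which holds precisely because $(f_{0}^{-})$ and $(f_{0}^{+})$ bracket $f(x,s)/s$ near the origin.
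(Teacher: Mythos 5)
Your proposal is correct and follows essentially the same route as the paper: the paper likewise combines Lemma~\ref{lemmar} and Lemma~\ref{lemmaR} via the additivity/excision property to get $deg(Id-\Phi,B(0,R^{*})\setminus B[0,r_{0}],0)=0-1=-1\neq0$, and then concludes positivity of the resulting nontrivial fixed point from Lemma~\ref{Maximum principle}. The only difference is expository: you spell out the verification of the hypotheses of Lemma~\ref{Maximum principle}$(ii)$ for $\tilde{f}$, which the paper leaves implicit.
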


\begin{proof}
Let us take $r_{0}$ as in Lemma~\ref{lemmar} and $R^{*}$ as in Lemma~\ref{lemmaR}. Clearly $0<r_{0}<R^{*}<+\infty$. Then
\begin{equation*}
\begin{aligned}
& deg(Id-\Phi,B(0,R^{*})\setminus B[0,r_{0}],0)=
\\ & = deg(Id-\Phi,B(0,R^{*}),0)-deg(Id-\Phi,B(0,r_{0}),0)=
\\ & =0-1=-1\neq0.
\end{aligned}
\end{equation*}
Hence a nontrivial solution exists and the claim follows from Lemma~\ref{Maximum principle}.
\end{proof}

{}From Theorem~\ref{Theorem4.1GHZ}, the next result follows.

\begin{corollary}\label{Corollary4.2GHZ}
Let $f(x,s)$ satisfy
\begin{equation*}
\lim_{s\to0^{+}}\dfrac{f(x,s)}{s}=0, \quad \text{uniformly a.e. } x\in I.
\end{equation*}
Assume $(H)$ and suppose that, for each $i\in\{1,\ldots,n\}$, there exists a compact interval $J_{i}\subseteq I_{i}$
such that
\begin{equation*}
\lim_{s\to+\infty}\dfrac{f(x,s)}{s}=+\infty, \quad \text{uniformly a.e. } x\in J_{i}.
\end{equation*}
Then there exists at least a positive solution of the two-point BVP \eqref{two-pointBVP}.
\end{corollary}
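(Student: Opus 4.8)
The plan is to deduce this from Theorem~\ref{Theorem4.1GHZ} by checking that the three asymptotic hypotheses of the corollary, together with $(H)$, entail the full list $(f^{*})$, $(f_{0}^{-})$, $(f_{0}^{+})$ and $(f_{\infty})$ required there. Most of the work amounts to choosing the auxiliary weights $q_{0}$ and $q_{\infty}^{i}$ conveniently and invoking the scaling law of the first eigenvalue, namely $\mu_{1}^{J}(cq) = \mu_{1}^{J}(q)/c$ for every $c>0$, which is immediate from the linearity of the weight in $\varphi'' + \mu q\varphi = 0$.

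First I would dispose of the conditions at zero. From $\lim_{s\to 0^{+}} f(x,s)/s = 0$ uniformly a.e., writing $f(x,s) = s\cdot(f(x,s)/s)$ gives $f(x,s)\to 0$ as $s\to 0^{+}$, and continuity of $s\mapsto f(x,s)$ yields $f(x,0)=0$, so $(f^{*})$ holds. The same limit forces $\liminf_{s\to 0^{+}} f(x,s)/s = 0 \geq 0$, so $(f_{0}^{-})$ holds with $q_{-}\equiv 0$, and also $\limsup_{s\to 0^{+}} f(x,s)/s = 0$. For $(f_{0}^{+})$ I would take $q_{0}\equiv \varepsilon$ a nonzero constant: then $\limsup_{s\to 0^{+}} f(x,s)/s = 0 \leq q_{0}(x)$ uniformly, and by the scaling law $\mu_{1}(q_{0}) = \mu_{1}(1)/\varepsilon = (\pi/L)^{2}/\varepsilon$, which exceeds $1$ as soon as $\varepsilon$ is small enough.

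The condition at infinity is handled separately on each positivity interval $I_{i}$. By $(H)$ we have $f(x,s)\geq 0$ a.e.~on $I_{i}$ for all $s\geq 0$, hence $\liminf_{s\to+\infty} f(x,s)/s \geq 0$ there; on the subinterval $J_{i}\subseteq I_{i}$ the hypothesis gives $\liminf_{s\to+\infty} f(x,s)/s = +\infty$ uniformly. I would therefore set $q_{\infty}^{i}(x) := M\,\mathbbm{1}_{J_{i}}(x)$ for a constant $M>0$; this is a nonzero element of $L^{1}(I_{i},{\mathbb{R}}^{+})$, and the two lower bounds combine to give $\liminf_{s\to+\infty} f(x,s)/s \geq q_{\infty}^{i}(x)$ uniformly a.e.~on $I_{i}$. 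Again by the scaling law, $\mu_{1}^{I_{i}}(q_{\infty}^{i}) = \mu_{1}^{I_{i}}(\mathbbm{1}_{J_{i}})/M$, which falls below $1$ once $M$ is taken large enough, a choice made independently for each $i$. Thus $(f_{\infty})$ holds, and with $(f^{*})$, $(f_{0}^{-})$, $(f_{0}^{+})$, $(H)$ and $(f_{\infty})$ all verified, Theorem~\ref{Theorem4.1GHZ} delivers a positive solution. There is essentially no obstacle here beyond bookkeeping: the only point requiring a little care is the \emph{uniformity} of the two limits, which is exactly what is needed so that the inequalities $\limsup \leq q_{0}$ and $\liminf \geq q_{\infty}^{i}$ hold in the uniform-a.e.~sense demanded by $(f_{0}^{+})$ and $(f_{\infty})$.
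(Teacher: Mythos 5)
Your proposal is correct and follows essentially the same route as the paper: both verify the hypotheses of Theorem~\ref{Theorem4.1GHZ} by taking $q_{0}$ to be a small positive constant (the paper uses $q_{0}\equiv K_{0}$ with $0<K_{0}<(\pi/L)^{2}$) and $q_{\infty}^{i}$ to be a large constant times $\mathbbm{1}_{J_{i}}$ (the paper takes $K_{\infty}>\max_{i}(\pi/{\it meas}(J_{i}))^{2}$, while you phrase the same choice through the eigenvalue scaling law $\mu_{1}^{J}(cq)=\mu_{1}^{J}(q)/c$). The only cosmetic difference is that you spell out the verification of $(f^{*})$, $(f_{0}^{-})$ and the uniformity issues that the paper dismisses as clear.
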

\begin{proof}
The assumption $f(x,s)/s\to 0$ as $s\to 0^{+}$ clearly implies $(f^{*})$, $(f_{0}^{-})$ and $(f_{0}^{+})$ with $q_{0}(x)\equiv K_{0}$,
where $0 < K_{0} < (\pi/L)^{2}$. Moreover, setting
\begin{equation*}
q_{\infty}^{i}(x) =
\begin{cases}
\, K_{\infty}, & \mbox{for }  x\in J_{i}; \\
\, 0,          & \mbox{for }  x\in I_{i}\setminus J_{i};
\end{cases}
\end{equation*}
with $K_{\infty} > \max_{i=1,\ldots,n} (\pi/{\it meas}(J_{i}))^{2}$,
we have $(f_{\infty})$ satisfied as well. The conclusion follows from Theorem~\ref{Theorem4.1GHZ}.
\end{proof}

The hypothesis $(f_{\infty})$ requires to control from below the growth of $f(x,s)/s$ at infinity, on \textit{each} of the
intervals $I_{i}$. In this context, a natural question which can be raised is whether a condition like $(f_{\infty})$
can be assumed only on \textit{one} of the intervals. As a partial answer we provide a result where
we consider a weaker condition in place of hypothesis $(f_{\infty})$,
namely we assume the condition only on a closed subinterval $J\subseteq I$, as in Lemma~\ref{lemmaRJ}.
In order to achieve an existence result, we add a supplementary condition of almost linear growth of $f(x,s)$ in $I\setminus J$.

\begin{theorem}\label{Th+LinCond}
Let $f(x,s)$ satisfy $(f^{*})$, $(f_{0}^{-})$, $(f_{0}^{+})$.
Let $J \subseteq I$ be a closed subinterval such that
\begin{equation*}
f(x,s)\geq 0, \quad \text{a.e. } x\in J, \; \forall \, s\geq0.
\end{equation*}
Assume the following conditions:
\begin{itemize}
\item [$(f_{\infty}^{J})$]
there exists a measurable function $q_{\infty}\in L^{1}(J,{\mathbb{R}}^{+})$ with $q_{\infty}\not\equiv0$, such that
\begin{equation*}
\liminf_{s\to+\infty}\dfrac{f(x,s)}{s}\geq q_{\infty}(x), \quad \text{uniformly a.e. } x\in J,
\end{equation*}
and
\begin{equation*}
\mu_{1}^{J}(q_{\infty})<1;
\end{equation*}
\item [$(G)$]
there exist $a,b\in L^{1}(I\setminus J,{\mathbb{R}}^{+})$ and $C>0$ such that
\begin{equation*}
|f(x,s)|\leq a(x)+b(x)s, \quad \text{a.e. } x\in I\setminus J, \; \forall \, s\geq C.
\end{equation*}
\end{itemize}
Then there exists at least a positive solution of the two-point BVP \eqref{two-pointBVP}.
\end{theorem}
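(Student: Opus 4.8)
The plan is to run the same degree-theoretic scheme as in Theorem~\ref{Theorem4.1GHZ}. Since $f$ satisfies $(f^{*})$, $(f_{0}^{-})$ and $(f_{0}^{+})$, Lemma~\ref{lemmar} supplies $r_{0}>0$ with $deg(Id-\Phi,B(0,r),0)=1$ for all $0<r\le r_{0}$. It then suffices to produce $R^{*}>r_{0}$ with $deg(Id-\Phi,B(0,R),0)=0$ for all $R\ge R^{*}$, because additivity gives $deg(Id-\Phi,B(0,R^{*})\setminus B[0,r_{0}],0)=-1\neq 0$, hence a nontrivial solution, which is positive by Lemma~\ref{Maximum principle}$(ii)$ (using the conditions at $0$). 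To obtain the vanishing degree I would argue as in Lemma~\ref{lemmaR}: set $v(x):=\int_{I}G(x,\xi)\mathbbm{1}_{J}(\xi)\,d\xi>0$ and study $u=\Phi(u)+\alpha v$ for $\alpha\ge 0$; by the second part of Theorem~\ref{deFigueiredo} it is enough to exhibit a uniform a priori bound $\|u\|_{\infty}<R^{*}$ for all such $u$ and all $\alpha\ge 0$. By the first part of Lemma~\ref{Maximum principle} every such $u$ is non-negative and solves $u''+f(x,u)+\alpha\mathbbm{1}_{J}(x)=0$.

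The bound on $J$ is immediate: there $g(x,s)=f(x,s)+\alpha\ge f(x,s)$, so $(f_{\infty}^{J})$ and Lemma~\ref{lemmaRJ} give $\max_{x\in J}u(x)<R_{J}$, a constant independent of $\alpha$. It remains to bound $u$ on $I\setminus J$, where $\mathbbm{1}_{J}\equiv 0$ and hence $u''+f(x,u)=0$. Writing $J=[x_{1},x_{2}]$, I would merge the Carath\'eodory estimate for $0\le s\le C$ with $(G)$ to obtain $\tilde a,\tilde b\in L^{1}(I\setminus J,\mathbb{R}^{+})$ such that $|f(x,s)|\le \tilde a(x)+\tilde b(x)s$ for a.e.\ $x\in I\setminus J$ and all $s\ge 0$. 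The task thus reduces to an a priori bound for the two-point problems solved by $u$ on the components of $I\setminus J$, say on $[0,x_{1}]$ with $u(0)=0$ and $u(x_{1})\le R_{J}$.

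I would establish this by a blow-up argument. Suppose no uniform bound exists; then there are $\alpha_{n}\ge 0$ and solutions $u_{n}$ with $M_{n}:=\|u_{n}\|_{\infty}\to+\infty$, and since $\max_{J}u_{n}<R_{J}$ the maximum is eventually attained on $I\setminus J$, say on $[0,x_{1}]$ after passing to a subsequence, so $\max_{[0,x_{1}]}u_{n}=M_{n}$. Normalizing $w_{n}:=u_{n}/M_{n}$, the estimate $|w_{n}''|\le \tilde a/M_{n}+\tilde b\,w_{n}$ bounds $w_{n}''$ in $L^{1}([0,x_{1}])$; together with $w_{n}(0)=0$ and $w_{n}(x_{1})\le R_{J}/M_{n}$ this gives a uniform Lipschitz bound, and a standard compactness argument yields a subsequence with $w_{n}\to w$ uniformly and $w_{n}'(x_{1})\to w'(x_{1})$, where $w\ge 0$, $w(0)=w(x_{1})=0$, $\max_{[0,x_{1}]}w=1$ and $|w''|\le \tilde b\,w$ a.e. The decisive point is the behaviour at $x_{1}$: on $J$ the solution $u_{n}$ is concave (there $u_{n}''=-f(x,u_{n})-\alpha_{n}\le 0$) and non-negative with $u_{n}(x_{1})\le R_{J}$, $u_{n}(x_{2})\ge 0$, so $u_{n}'(x_{1})\ge (u_{n}(x_{2})-u_{n}(x_{1}))/(x_{2}-x_{1})\ge -R_{J}/(x_{2}-x_{1})$. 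Dividing by $M_{n}$ gives $w'(x_{1})\ge 0$, while $w\ge 0=w(x_{1})$ forces $w'(x_{1})\le 0$; hence $w(x_{1})=w'(x_{1})=0$. Integrating $|w''|\le \tilde b\,w$ twice from $x_{1}$, a Gronwall/contraction argument (legitimate since $\tilde b\in L^{1}$) gives $w\equiv 0$ near $x_{1}$ and then on all of $[0,x_{1}]$, contradicting $\max_{[0,x_{1}]}w=1$. The component $[x_{2},L]$, when present, is handled symmetrically.

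I expect the a priori bound on $I\setminus J$ to be the only genuine obstacle: condition $(G)$ alone does not exclude resonant blow-up (one can have $|w''|\le \tilde b\,w$ with $w\not\equiv 0$ when $\tilde b$ is large), and the essential extra ingredient is the matching at the endpoints of $J$ — the concavity of $u$ there, coming from $f\ge 0$ on $J$, pins the normalized slope to $w'(x_{1})=0$, after which Gronwall collapses the limit profile. The remaining steps, namely the complete continuity of $\Phi$, the compactness in the normalization, and the final degree bookkeeping, are routine.
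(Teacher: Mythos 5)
Your proposal is correct, but it takes a genuinely different route from the paper's proof. The paper never establishes a bound uniform in $\alpha$: it works on the unbounded open set $\Omega_{J}=\{u\in\mathcal{C}(I)\colon |u(x)|<R_{J} \text{ on } J\}$, derives on $I\setminus J$ only an $\alpha$-dependent Gronwall bound $R_{\alpha}$ (starting from the estimate $|u'|\leq K$ on $J$, where $K$ itself grows with $\alpha$ because $|u''|\leq \gamma_{R_{J}}+\alpha$ there), and then invokes the extended degree for unbounded sets, Theorem~\ref{deFigueiredo2}; since $\alpha$-dependent bounds cannot exclude solutions for large $\alpha$, the paper needs the separate step $(iii)$, a Sturm comparison against the first eigenfunction of $\varphi''+\lambda_{J}\varphi=0$ on $J$ (its forcing term is $\varphi\mathbbm{1}_{J}$ rather than your $\mathbbm{1}_{J}$, an immaterial difference). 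You instead prove an a priori bound uniform in $\alpha\geq 0$ by blow-up: Lemma~\ref{lemmaRJ} gives $\max_{J}u<R_{J}$ independently of $\alpha$, concavity on $J$ (where $u''=-f-\alpha\leq 0$) pins $u'(x_{1})\geq -R_{J}/(x_{2}-x_{1})$, and in the normalized limit this forces the Cauchy data $w(x_{1})=w'(x_{1})=0$, after which Gronwall uniqueness for $|w''|\leq \tilde{b}\,w$ annihilates the profile; your diagnosis that the endpoint matching, not $(G)$ alone, is what excludes resonant blow-up is exactly the right point. This lets you remain within classical Leray--Schauder degree on balls via the second part of Theorem~\ref{deFigueiredo}, where the large-$\alpha$ nonexistence is automatic on a bounded set. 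What you give up is constructiveness (your $R^{*}$ arises from a compactness contradiction, whereas the paper's constants are explicit) and the fact that the paper's proof deliberately rehearses, on $\Omega_{J}$, the unbounded-set machinery that is indispensable later for the multiplicity theorem, where sets of the type $\Gamma^{\mathcal{I}}$ cannot be replaced by balls. Two details you should write out in full: the convergence $w_{n}'\to w'$ (it follows from Arzel\`{a}--Ascoli, since $w_{n}''$ has the fixed $L^{1}$ majorant $\tilde{a}+\tilde{b}$, giving equi-absolute continuity of $w_{n}'$), and the propagation of $w\equiv 0$ from a neighborhood of $x_{1}$ to all of $\mathopen[0,x_{1}\mathclose]$ (use that every subinterval of a fixed small length has $\int \tilde{b}$ small, so the contraction step advances by a uniform amount and finitely many steps suffice).
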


\begin{proof}
As in Lemma~\ref{lemmaRJ}, set $J:=\mathopen[x_{1},x_{2}\mathclose]$. We define the set
\begin{equation*}
\Omega_{J}:=\{u\in\mathcal{C}(I)\colon |u(x)|< R_{J}, \, \text{for all } x\in J\},
\end{equation*}
where $R_{J}>0$ is as in Lemma~\ref{lemmaRJ}. Note that $\Omega_{J}$ is open and not bounded
(unless we are in the trivial case $J = I$).

Define $\lambda_{J}:=\mu_{1}^{J}(1) = (\pi/{\it meas}(J))^{2}$. Along the proof, we denote by $\varphi$ the positive eigenfunction of
\begin{equation*}
\begin{cases}
\, \varphi''+\lambda_{J}\varphi=0 \\
\, \varphi(x_{1})=\varphi(x_{2})=0,
\end{cases}
\end{equation*}
with $\|\varphi\|_{\infty}=1$. Then $\varphi(x) > 0$, for all $x\in\mathopen]x_{1},x_{2}\mathclose[$, and $\varphi'(x_{2})<0<\varphi'(x_{1})$.

We denote by $\mathbbm{1}_{J}$ the characteristic function of the interval $J$. We set $v(x):=\int_{I}G(x,s)\varphi(s)\mathbbm{1}_{J}(s) ~\!ds$
and we define $F\colon \mathcal{C}(I)\times\mathopen[0,+\infty\mathclose[\to \mathcal{C}(I)$, as
\begin{equation*}
F(u,\alpha):=\Phi(u)+\alpha v.
\end{equation*}
To reach the conclusion as in Theorem~\ref{Theorem4.1GHZ}, we have to prove that
the triplet $(Id-\Phi,\Omega_{J},0)$ is admissible and
\begin{equation*}
deg(Id-\Phi,\Omega_{J},0)=0.
\end{equation*}
To this end we show
that conditions $(i)$, $(ii)$, $(iii)$ of Theorem~\ref{deFigueiredo2} are satisfied.
It is obvious that $F(u,0)=\Phi(u)$, for all $u\in\mathcal{C}(I)$. Hence $(i)$ is valid.

Preliminary to the proof of $(ii)$ and $(iii)$, we observe that
any nontrivial solution $u\in \mathcal{C}(I)$ of the operator equation
\begin{equation*}
u = \Phi(u) + \alpha v,\quad \mbox{for } \alpha \geq 0,
\end{equation*}
is a solution of
$u'' + \tilde{f}(x,u) + \alpha \varphi(x)\mathbbm{1}_{J}(x) = 0$ with $u(0) = u(L) = 0$.
By the first part of Lemma~\ref{Maximum principle}
we know that $u(x)\geq 0$ for all $x\in I$. Hence $u$ is a non-negative solution of
\eqref{BVP2} with
\begin{equation*}
g(x,s) = f(x,s) + \alpha \varphi(x)\mathbbm{1}_{J}(x).
\end{equation*}
By definition, we have that $g(x,s) \geq f(x,s)$ for a.e.~$x\in J$ and for all $s\geq 0$,
and also $g(x,s) = f(x,s)$ for a.e.~$x\in I\setminus J$ and for all $s\geq 0$.

\medskip

\noindent
\textit{Proof of $(ii)$.\,}
Fix $\alpha\geq0$. Suppose that there exist $u\in\overline{\Omega_{J}}$ and $0\leq\zeta\leq\alpha$
satisfying $u=F(u,\zeta)$. Clearly $u\in\Omega_{J}$, by the choice of $R_{J}$
and by Lemma~\ref{lemmaRJ}. We first prove that  $|u'(x)|$ is bounded on $J$.
Using the fact that
\begin{equation*}
|u''(x)|=|f(x,s)+\zeta\varphi(x)\mathbbm{1}_{J}(x)|\leq\gamma_{R_{J}}(x)+\alpha, \quad \text{a.e. } x\in J,
\end{equation*}
we obtain that for all $y_{1},y_{2}\in J$
\begin{equation*}
|u'(y_{1})-u'(y_{2})|\leq\biggl{|}\int_{y_{1}}^{y_{2}}(\gamma_{R_{J}}(\xi)+\alpha)~\!d\xi\biggr{|}
\leq\|\gamma_{R_{J}}\|_{L^{1}(J)}+\alpha(x_{2}-x_{1}).
\end{equation*}

Now we show that there exists $\hat{x}\in J$ such that $|u'(\hat{x})|\leq {R_{J}}/{(x_{2}-x_{1})}$.
By contradiction, suppose that
\begin{equation*}
|u'(x)|>\dfrac{R_{J}}{x_{2}-x_{1}}, \quad \forall \, x\in J.
\end{equation*}
Without loss of generality, suppose $u'>0$ on $J$ (the opposite case is analogous). Then
\begin{equation*}
R_{J}=\dfrac{R_{J}}{x_{2}-x_{1}}(x_{2}-x_{1})<\int_{x_{1}}^{x_{2}}u'(\xi)~\!d\xi=u(x_{2})-u(x_{1})\leq u(x_{2})\leq R_{J},
\end{equation*}
a contradiction.

Then, for all $x\in J$, we have
\begin{equation*}
|u'(x)|\leq |u'(\hat{x})|+|u'(x)-u'(\hat{x})| \leq \dfrac{R_{J}}{x_{2}-x_{1}}+\|\gamma_{R_{J}}\|_{L^{1}(I)}+\alpha(x_{2}-x_{1})=:K,
\end{equation*}
where $K$ is a constant depending on $J$, $R_{J}$ and $\alpha$. As a consequence,
\begin{equation*}
\|(u(x),u'(x))\| \leq K^{*}:= R_{J} + K, \quad \forall \, x\in J,
\end{equation*}
where we use $\|(\xi_{1},\xi_{2})\| = |\xi_{1}| + |\xi_{2}|$ as a standard norm in ${\mathbb{R}}^{2}$.

Now, recalling the Carath\'{e}odory condition on $|f(x,s)|$, we rewrite hypothesis $(G)$ in this form:
\textit{there exist $a_{1},b_{1}\in L^{1}(\mathopen[0,x_{1}\mathclose],{\mathbb{R}}^{+})$,
$a_{2},b_{2}\in L^{1}(\mathopen[x_{2},L\mathclose],{\mathbb{R}}^{+})$, such that, for all $s\geq 0$,
\begin{equation*}
\begin{aligned}
   & |f(x,s)|\leq a_{1}(x)+b_{1}(x)s, \quad \text{a.e. } x\in \mathopen[0,x_{1}\mathclose];
\\ & |f(x,s)|\leq a_{2}(x)+b_{2}(x)s, \quad \text{a.e. } x\in \mathopen[x_{2},L\mathclose].
\end{aligned}
\end{equation*}
}
Suppose $x_{2}<L$. For all $x\in\mathopen]x_{2},L\mathclose]$ we have $\mathbbm{1}_{J}(x)=0$ and then
\begin{equation*}
\begin{aligned}
& \|(u(x),u'(x))\| = |u(x)| + |u'(x)| = u(x) + |u'(x)| =
\\ &  = u(x_{2})+\int_{x_{2}}^{x}u'(\xi)~\!d\xi
       +\biggl|u'(x_{2})+\int_{x_{2}}^{x}\bigl{[}-f(\xi,u(\xi))-\zeta\varphi(\xi)\mathbbm{1}_{J}(\xi)\bigr{]}~\!d\xi\biggr{|}
\\ & \leq  \|(u(x_{2}),u'(x_{2}))\| + \int_{x_{2}}^{x}|u'(\xi)|~\!d\xi+\int_{x_{2}}^{x}|f(\xi,u(\xi))|~\!d\xi
\\ & \leq  K^{*} + \int_{x_{2}}^{x}|u'(\xi)|~\!d\xi + \int_{x_{2}}^{x}\bigr[a_{2}(\xi)+b_{2}(\xi)u(\xi)\bigr]~\!d\xi
\\ & \leq  K^{*} + \|a_{2}\|_{L^{1}(\mathopen[x_{2},L\mathclose])} +
\int_{x_{2}}^{x}(b_{2}(\xi)+1)\|(u(\xi),u'(\xi))\|~\!d\xi.
\end{aligned}
\end{equation*}
Define
\begin{equation*}
R_{\alpha}^{2}:=(K^{*}  +\|a_{2}\|_{L^{1}(\mathopen[x_{2},L\mathclose])})\,
e^{\|1+b_{2}\|_{L^{1}(\mathopen[x_{2},L\mathclose])}}
\end{equation*}
(observe that $K^{*}$ depends on $\alpha$).
By Gronwall's inequality, we have
\begin{equation*}
0\leq u(x)\leq \|(u(x),u'(x))\| \leq R_{\alpha}^{2}, \quad \forall \, x\in\mathopen[x_{2},L\mathclose].
\end{equation*}
If $x_{1}>0$, we achieve a similar upper bound (denoted by $R_{\alpha}^{1}$) for $u(x)$ on $\mathopen[0,x_{1}\mathclose]$. The proof is analogous
and therefore is omitted.
We conclude that $F$ satisfies condition $(ii)$ of Theorem~\ref{deFigueiredo2}, with $R_{\alpha}:=\max\{R_{\alpha}^{1},R_{\alpha}^{2}\}$.

\medskip

\noindent
\textit{Proof of $(iii)$.\,} Let us fix a constant $\alpha_{0}$ with
\begin{equation*}
\alpha_{0} > \dfrac{\lambda_{J}R_{J}(x_{2}-x_{1}) +\|\gamma_{R_{J}}\|_{L^{1}(J)}}{\|\varphi\|^{2}_{L^{2}(J)}}.
\end{equation*}
Suppose by contradiction that there exist $\tilde{u}\in\overline{\Omega_{J}}$ and $\tilde{\alpha}\geq\alpha_{0}$
such that $\tilde{u}=F(\tilde{u},\tilde{\alpha})$. Then, we obtain
\begin{equation*}
\begin{aligned}
0  & \geq \tilde{u}(x_{2})\varphi'(x_{2})-\tilde{u}(x_{1})\varphi'(x_{1})
     = \Bigl{[}\tilde{u}(x)\varphi'(x)-\tilde{u}'(x)\varphi(x)\Bigr{]}_{x=x_{1}}^{x=x_{2}}
\\ & = \int_{x_{1}}^{x_{2}}\dfrac{d}{dx}\Bigl{[}\tilde{u}(x)\varphi'(x)-\tilde{u}'(x)\varphi(x)\Bigr{]} ~\!dx
     = \int_{J}\Bigl{[}\tilde{u}(x)\varphi''(x)-\tilde{u}''(x)\varphi(x)\Bigr{]} ~\!dx
\\ & = \int_{J}\Bigl{[}-\lambda_{J}\tilde{u}(x)\varphi(x)+f(x,\tilde{u}(x))\varphi(x)+\tilde{\alpha}(\varphi(x))^{2}\Bigr{]} ~\!dx
\\ & \geq -\lambda_{J}R_{J}(x_{2}-x_{1})-\|\gamma_{R_{J}}\|_{L^{1}(J)} + \tilde{\alpha}\|\varphi\|^{2}_{L^{2}(J)}
> 0,
\end{aligned}
\end{equation*}
a contradiction. Hence $F$ satisfies $(iii)$.

We have thus verified all the conditions in Theorem~\ref{deFigueiredo2}. This concludes the proof.
\end{proof}

Up to now, we have essentially reconsidered in an explicit topological degree setting the existence results obtained
in \cite{GaudenziHabetsZanolin3} by means of lower and upper solutions techniques.
Our next goal is to produce multiplicity results by taking advantage of the previous lemmas
and exploiting the excision property of the Leray-Schauder degree
in order to provide more precise information about the localization of the solutions.

\section{Multiplicity results}\label{section4}

In this section we propose an approach, based on the additivity property of the Leray-Schauder degree, in order to provide
sharp multiplicity results for positive solutions of
\begin{equation}\label{two-pointBVP4}
\begin{cases}
\, u''+f(x,u)=0 \\
\, u(0)=u(L)=0.
\end{cases}
\end{equation}
Throughout the section we suppose that $f(x,s)$ is a Carath\'{e}odory function satisfying $(f^{*})$, $(f_{0}^{-})$, $(f_{0}^{+})$,
as well as $(H)$ with $(f_{\infty})$. Recall also that, in view of the discussion in Section~\ref{section2}, the positive solutions
of \eqref{two-pointBVP4} are the nontrivial fixed points of the completely continuous operator $\Phi\colon\mathcal{C}(I)\to\mathcal{C}(I)$
defined in \eqref{operator}.

We introduce now some notation. Let $\mathcal{I}\subseteq\{1,\ldots,n\}$ be a subset of indices (possibly empty) and let
$r,R$ be two fixed constants with
\begin{equation*}
0 < r \leq r_{0} < R^{*} \leq R,
\end{equation*}
where $r_{0}$ and $R^{*}$ are the constants obtained in Lemma~\ref{lemmar0} and Lemma~\ref{lemmaR}, respectively.
We define two families of open and possibly unbounded sets
\begin{equation*}
\begin{aligned}
\Omega^{\mathcal{I}}:=
\biggl\{\,u\in\mathcal{C}(I)\colon
   & \max_{x\in I_{i}}|u(x)|<R, \, i\in\mathcal{I};                             &
\\ & \max_{x\in I_{i}}|u(x)|<r, \, i\in\{1,\ldots,n\}\setminus\mathcal{I}       & \biggr\}
\end{aligned}
\end{equation*}
and
\begin{equation*}
\begin{aligned}
\Lambda^{\mathcal{I}}:=
\biggl\{\,u\in\mathcal{C}(I)\colon
   & r<\max_{x\in I_{i}}|u(x)|<R, \, i\in\mathcal{I};                           &
\\ &   \max_{x\in I_{i}}|u(x)|<r, \, i\in\{1,\ldots,n\}\setminus\mathcal{I}     & \biggr\}.
\end{aligned}
\end{equation*}
We note that, for each $\mathcal{I}\subseteq \{1,\ldots,n\}$, we have
\begin{equation*}
\Omega^{\mathcal{I}}=\bigcup_{\mathcal{J}\subseteq\mathcal{I}}\Lambda^{\mathcal{J}},
\end{equation*}
and the union is disjoint, since $\Lambda^{\mathcal{J}'} \cap \Lambda^{\mathcal{J}''} = \emptyset$, for $\mathcal{J}'\neq\mathcal{J}''$.

\noindent
We also observe that for ${\mathcal{I}} = \emptyset$, we have
\begin{equation*}
\Lambda^{\emptyset} = \Omega^{\emptyset} =
\biggl\{u\in\mathcal{C}(I)\colon
\max_{x\in I_{i}}|u(x)|<r, \, i\in\{1,\ldots,n\} \biggr\} \supseteq B(0,r).
\end{equation*}
By the maximum principle (Lemma~\ref{Maximum principle}) any solution $u\in {\text{cl}}(\Lambda^{\emptyset})$
of the operator equation $u = \Phi(u)$ is a (non-negative) solution
of \eqref{two-pointBVP4} such that $0\leq u(x) \leq r$, for all $x\in\bigcup_{i =1}^{n} I_{i}$.
On the other hand, we know that $u(x)$ is convex in each interval contained in $I\setminus\bigcup_{i =1}^{n} I_{i}$ and thus we conclude that
$0 \leq u(x) \leq r$, $\forall \, x\in I$, so that $u\in B[0,r]$. Lemma~\ref{lemmar0}, Lemma~\ref{lemmar}
and the choice of $r\in\mathopen]0,r_{0}\mathclose]$ then imply
\begin{equation}\label{deg1}
deg(Id-\Phi,\Lambda^{\emptyset},0)=deg(Id-\Phi,B(0,r),0)=1.
\end{equation}
The above relation shows that even if $\Lambda^{\mathcal{I}}$ is an unbounded open set, then, at least for ${\mathcal{I}} = \emptyset$,
the topological degree is well defined. The next result is the key lemma to provide the existence
of nontrivial fixed points (and hence multiplicity results) whenever the topological degree is defined on the sets $\Lambda^{\mathcal{I}}$ and
$\Omega^{\mathcal{I}}$.

\begin{lemma}\label{lemma-1}
Let $\mathcal{I}\subseteq\{1,\ldots,n\}$ be a set of indices.
Suppose that for all ${\mathcal{J}}\subseteq {\mathcal{I}}$,
the triplets $(Id-\Phi,\Lambda^{\mathcal{J}},0)$ and
$(Id-\Phi,\Omega^{\mathcal{J}},0)$ are admissible  with
\begin{equation}\label{deg2}
deg(Id-\Phi,\Omega^{\mathcal{J}},0)=0, \quad \forall \, \emptyset\neq\mathcal{J}\subseteq {\mathcal{I}}.
\end{equation}
Then
\begin{equation}\label{deg3}
deg(Id-\Phi,\Lambda^{\mathcal{I}},0)=(-1)^{\#\mathcal{I}}.
\end{equation}
\end{lemma}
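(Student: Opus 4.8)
The plan is to compute $deg(Id-\Phi,\Lambda^{\mathcal{I}},0)$ by feeding the disjoint decomposition $\Omega^{\mathcal{I}}=\bigcup_{\mathcal{J}\subseteq\mathcal{I}}\Lambda^{\mathcal{J}}$ into the additivity (excision) property of the Leray--Schauder degree, and then solving the resulting recurrence by induction on $\#\mathcal{I}$. Writing $d_{\mathcal{J}}:=deg(Id-\Phi,\Lambda^{\mathcal{J}},0)$, the degrees are all well defined because every triplet $(Id-\Phi,\Lambda^{\mathcal{J}},0)$ and $(Id-\Phi,\Omega^{\mathcal{J}},0)$ is admissible by hypothesis (so the compactness of the solution set makes the extended degree on these unbounded sets meaningful). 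Once I know that every fixed point of $\Phi$ lying in $\Omega^{\mathcal{I}}$ in fact belongs to $\bigcup_{\mathcal{J}\subseteq\mathcal{I}}\Lambda^{\mathcal{J}}$, additivity gives
\begin{equation*}
deg(Id-\Phi,\Omega^{\mathcal{I}},0)=\sum_{\mathcal{J}\subseteq\mathcal{I}}d_{\mathcal{J}},
\end{equation*}
and I read off the left-hand side from \eqref{deg1} (value $1$) when $\mathcal{I}=\emptyset$ and from \eqref{deg2} (value $0$) when $\mathcal{I}\neq\emptyset$.

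The delicate point, and the step I expect to be the main obstacle, is justifying this additivity formula: the stated equality $\Omega^{\mathcal{I}}=\bigcup_{\mathcal{J}\subseteq\mathcal{I}}\Lambda^{\mathcal{J}}$ is not literal, since the ``threshold'' functions with $\max_{x\in I_{i_{0}}}|u(x)|=r$ for some $i_{0}\in\mathcal{I}$ lie in $\Omega^{\mathcal{I}}$ but in none of the $\Lambda^{\mathcal{J}}$. I must therefore check that $Id-\Phi$ has no zeros in $\overline{\Omega^{\mathcal{I}}}\setminus\bigcup_{\mathcal{J}\subseteq\mathcal{I}}\Lambda^{\mathcal{J}}$. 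Zeros on $\partial\Omega^{\mathcal{I}}$ are excluded by the admissibility of $(Id-\Phi,\Omega^{\mathcal{I}},0)$. For a fixed point $u=\Phi(u)$ interior to $\Omega^{\mathcal{I}}$ but outside the union there is $i_{0}\in\mathcal{I}$ with $\max_{x\in I_{i_{0}}}|u(x)|=r$; setting $\mathcal{J}:=\{i\in\mathcal{I}\colon\max_{x\in I_{i}}|u(x)|>r\}\subseteq\mathcal{I}$, I would show $u\in\partial\Lambda^{\mathcal{J}}$. Indeed $u\notin\Lambda^{\mathcal{J}}$ because $i_{0}\in\mathcal{I}\setminus\mathcal{J}$ makes the required strict bound $\max_{I_{i_{0}}}|u|<r$ fail, while the contractions $(1-\varepsilon)u$ lie in $\Lambda^{\mathcal{J}}$ for all small $\varepsilon>0$ (a small contraction keeps $\max_{I_{i}}|u|>r$ for $i\in\mathcal{J}$, turns $\max_{I_{i}}|u|\le r$ into a strict bound for $i\in\mathcal{I}\setminus\mathcal{J}$, and preserves $\max_{I_{i}}|u|<r$ for $i\notin\mathcal{I}$ and $\max_{I_{i}}|u|<R$ for $i\in\mathcal{I}$), so $u\in\overline{\Lambda^{\mathcal{J}}}$. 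Thus $u$ would be a zero of $Id-\Phi$ on $\partial\Lambda^{\mathcal{J}}$ with $\mathcal{J}\subseteq\mathcal{I}$, contradicting the assumed admissibility of $(Id-\Phi,\Lambda^{\mathcal{J}},0)$. Hence no such $u$ exists and the additivity formula is valid.

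It then remains to solve the recurrence, which I would do by induction on $m:=\#\mathcal{I}$. The base case $m=0$ is exactly \eqref{deg1}, giving $d_{\emptyset}=1=(-1)^{0}$. For the inductive step, assume $d_{\mathcal{J}}=(-1)^{\#\mathcal{J}}$ for every proper subset $\mathcal{J}\subsetneq\mathcal{I}$. Since $\mathcal{I}\neq\emptyset$, the left-hand side of the additivity formula vanishes by \eqref{deg2}, so
\begin{equation*}
0=d_{\mathcal{I}}+\sum_{\mathcal{J}\subsetneq\mathcal{I}}(-1)^{\#\mathcal{J}}
=d_{\mathcal{I}}+\sum_{k=0}^{m-1}\binom{m}{k}(-1)^{k}
=d_{\mathcal{I}}-(-1)^{m},
\end{equation*}
where the last equality isolates the top term $k=m$ in the identity $\sum_{k=0}^{m}\binom{m}{k}(-1)^{k}=0$. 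Therefore $d_{\mathcal{I}}=(-1)^{m}=(-1)^{\#\mathcal{I}}$, which is precisely \eqref{deg3}. The only genuinely nontrivial ingredient is the boundary analysis of the second paragraph; the remaining computations are the routine additivity bookkeeping and the elementary binomial cancellation.
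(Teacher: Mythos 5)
Your proof is correct and follows essentially the same route as the paper: decompose $\Omega^{\mathcal{J}}$ as the disjoint union of the sets $\Lambda^{\mathcal{K}}$, $\mathcal{K}\subseteq\mathcal{J}$, apply the additivity property of the degree together with \eqref{deg1} and \eqref{deg2}, and close the induction on $\#\mathcal{I}$ with the cancellation $\sum_{\mathcal{K}\subseteq\mathcal{J}}(-1)^{\#\mathcal{K}}=0$. The only place where you go beyond the paper is the boundary analysis in your second paragraph: the paper simply asserts the set equality $\Omega^{\mathcal{I}}=\bigcup_{\mathcal{J}\subseteq\mathcal{I}}\Lambda^{\mathcal{J}}$ (which, as you observe, is not literal because of the threshold functions) and invokes additivity without comment, whereas you explicitly verify that no fixed point of $\Phi$ can occur at the level $\max_{I_{i_0}}|u|=r$, using admissibility of $(Id-\Phi,\Lambda^{\mathcal{J}},0)$ to rule out zeros on $\partial\Lambda^{\mathcal{J}}$ exactly as the paper itself uses admissibility elsewhere (e.g.\ in the proof of Lemma~\ref{lemma0}), so your write-up is, if anything, more complete than the original.
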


\begin{proof}
First of all, we notice that, in view of \eqref{deg1}, the conclusion is trivially satisfied when ${\mathcal{I}} = \emptyset$.
Suppose now that $m:=\#{\mathcal{I}} \geq 1$.
We are going to prove our claim by using an inductive argument.
More precisely, for every integer $k$ with $0\leq k\leq m$, we introduce the property ${\mathscr{P}(k)}$
which reads as follows:
\textit{the formula
\begin{equation*}
deg(Id-\Phi,\Lambda^{\mathcal{J}},0)=(-1)^{\#\mathcal{J}}
\end{equation*}
holds for each subset ${\mathcal{J}}$ of ${\mathcal{I}}$ having at most $k$ elements.}
In this manner, if we are able to prove ${\mathscr{P}}(m)$, then \eqref{deg3} follows.

\noindent
\textit{Verification of ${\mathscr{P}}(0)$.} See \eqref{deg1}.

\noindent
\textit{Verification of ${\mathscr{P}}(1)$.}
For ${\mathcal{J}} =\emptyset$ the result is already proved in \eqref{deg1}.
For ${\mathcal{J}}=\{j\}$, with $j\in {\mathcal{I}}$, we have
\begin{equation*}
\begin{aligned}
deg(Id-\Phi,\Lambda^{\mathcal{J}},0)
    & = deg(Id-\Phi,\Lambda^{\{j\}},0)=deg(Id-\Phi,\Omega^{\{j\}}\setminus \Lambda^{\emptyset},0)
\\  & = 0-1=-1  =(-1)^{\#\mathcal{J}}.
\end{aligned}
\end{equation*}

\noindent
\textit{Verification of ${\mathscr{P}}(k-1)\Rightarrow{\mathscr{P}}(k)$, for $1\leq k\leq m$.}
Assuming the validity of ${\mathscr{P}}(k-1)$ we have that the formula is true for every subset of ${\mathcal{I}}$
having at most $k-1$ elements. Therefore, in order to prove ${\mathscr{P}}(k)$, we have only to check that the formula is true for an
arbitrary subset $\mathcal{J}$ of $\mathcal{I}$ with $\#{\mathcal{J}} = k$.
Writing $\Omega^{\mathcal{J}}$ as the disjoint union
\begin{equation*}
\Omega^{\mathcal{J}} = \Lambda^{\mathcal{J}} \cup \bigcup_{\mathcal{K}\subsetneq\mathcal{J}}\Lambda^{\mathcal{K}},
\end{equation*}
by the inductive hypothesis, we obtain
\begin{equation*}
\begin{aligned}
&       deg(Id-\Phi,\Lambda^{\mathcal{J}},0)
      = deg(Id-\Phi,\Omega^{\mathcal{J}},0)-\sum_{\mathcal{K}\subsetneq\mathcal{J}}deg(Id-\Phi,\Lambda^{\mathcal{K}},0) =
\\  & = 0-\sum_{\mathcal{K}\subsetneq\mathcal{J}}(-1)^{\#\mathcal{K}}
      = -\sum_{\mathcal{K}\subseteq\mathcal{J}}(-1)^{\#\mathcal{K}}+(-1)^{\#\mathcal{J}}.
\end{aligned}
\end{equation*}
Observe now that
\begin{equation*}
\sum_{\mathcal{K}\subseteq\mathcal{J}}(-1)^{\#\mathcal{K}} = 0,
\end{equation*}
due to the fact that in a finite set there are so many subsets of even cardinality how many subsets of odd cardinality.
Thus we conclude that
\begin{equation*}
deg(Id-\Phi,\Lambda^{\mathcal{J}},0) = (-1)^{\#\mathcal{J}}
\end{equation*}
and ${\mathscr{P}}(k)$ is proved.
\end{proof}

In order to apply Lemma~\ref{lemma-1} we have to check assumption \eqref{deg2}. The next result provides sufficient conditions for
\eqref{deg2}. To this aim, we introduce a third family of unbounded sets, defined as follows
\begin{equation*}
\Gamma^{\mathcal{I}}:=
\Bigl\{u\in\mathcal{C}(I)\colon \max_{x\in I_{i}}|u(x)|<r, \, i\in\{1,\ldots,n\}\setminus\mathcal{I} \Bigr\},
\end{equation*}
where $\mathcal{I}\subseteq\{1,\ldots,n\}$. It is also convenient to consider Carath\'{e}odory functions
$g\colon I\times{\mathbb{R}}^{+}\to{\mathbb{R}}$ such that
\begin{equation}\label{cond_g}
\begin{aligned}
   &  g(x,s) \geq f(x,s), \quad \text{a.e. } x\in \bigcup_{i\in {\mathcal{I}}} I_{i}, \; \forall \, s\geq 0;
\\ &  g(x,s) = f(x,s),    \quad \text{a.e. } x\in I \setminus \bigcup_{i\in {\mathcal{I}}} I_{i}, \; \forall \, s\geq 0.
\end{aligned}
\end{equation}
Using a standard procedure, for any given $g(x,s)$ as above, we
define a completely continuous operator $\Psi_{g}\colon\mathcal{C}(I)\to\mathcal{C}(I)$ as
\begin{equation*}
(\Psi_{g} u)(x):= \int_{I} G(x,\xi)\tilde{g}(\xi,u(\xi))~\!d\xi,
\end{equation*}
where
\begin{equation*}
\tilde{g}(x,s)=
\begin{cases}
\, g(x,s), & \text{if } s\geq0; \\
\, g(x,0),  & \text{if } s\leq0.
\end{cases}
\end{equation*}
Notice that $g(x,0) = 0$, for a.e.~$x\notin\bigcup_{i\in {\mathcal{I}}} I_{i}$, while $g(x,0) \geq 0$, for a.e.
$x\in\bigcup_{i\in {\mathcal{I}}} I_{i}$. In this manner, the first part of Lemma~\ref{Maximum principle} applies for
$h(x,s) := \tilde{g}(x,s)$.

\begin{lemma}\label{lemma0}
Let $\mathcal{I}\subseteq\{1,\ldots,n\}$, with $\mathcal{I}\neq\emptyset$.
Suppose that the triplet $(Id-\Psi_{g},\Gamma^{\mathcal{I}},0)$ is admissible
for every Carath\'{e}odory function $g\colon I\times{\mathbb{R}}^{+}\to{\mathbb{R}}$ satisfying \eqref{cond_g}.
Then
\begin{equation*}
deg(Id-\Phi,\Omega^{\mathcal{I}},0)=0.
\end{equation*}
\end{lemma}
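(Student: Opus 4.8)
The idea is to compare $\Phi$ with the operators $\Psi_g$ through a homotopy on the domain $\Omega^{\mathcal{I}}$, and to force the degree to vanish by adding a large forcing term supported on $\bigcup_{i\in\mathcal{I}}I_i$ that pushes solutions out of $\Omega^{\mathcal{I}}$. First I would observe that on $\Omega^{\mathcal{I}}$ the bound $\max_{x\in I_i}|u(x)|<R$ is imposed exactly on the indices $i\in\mathcal{I}$, while on the remaining intervals one has $\max_{x\in I_i}|u(x)|<r$; since $\Gamma^{\mathcal{I}}$ only retains the latter constraints, $\Omega^{\mathcal{I}}\subseteq\Gamma^{\mathcal{I}}$. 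The key point is that on $\Gamma^{\mathcal{I}}$, by the very definition of $R^{*}$ in Lemma~\ref{lemmaR} (applied through Lemma~\ref{lemmaRJ} on each $I_i$ with $i\in\mathcal{I}$), any non-negative solution of the relevant BVP automatically satisfies $\max_{x\in I_i}u(x)<R$ for $i\in\mathcal{I}$, which is why one expects no solutions to escape onto the part of $\partial\Omega^{\mathcal{I}}$ coming from the $R$-constraints. This is the mechanism that makes the additivity/excision bookkeeping in Lemma~\ref{lemma-1} work.

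Concretely, I would introduce a forcing function $v(x):=\int_I G(x,\xi)\,\mathbbm{1}_{A_{\mathcal{I}}}(\xi)\,d\xi$, where $A_{\mathcal{I}}:=\bigcup_{i\in\mathcal{I}}I_i$, and consider the admissible homotopy $u=\Phi(u)+\alpha v$ for $\alpha\geq 0$. Any nontrivial solution solves $u''+\tilde f(x,u)+\alpha\,\mathbbm{1}_{A_{\mathcal{I}}}(x)=0$ with the Dirichlet conditions, hence is a non-negative solution of \eqref{BVP2} with $g(x,s)=f(x,s)+\alpha\,\mathbbm{1}_{A_{\mathcal{I}}}(x)$, which is precisely of the form \eqref{cond_g}. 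By the hypothesis that every such triplet $(Id-\Psi_g,\Gamma^{\mathcal{I}},0)$ is admissible, together with the a priori bound furnished by Lemma~\ref{lemmaRJ}, no solution lies on the $R$-boundary of $\Omega^{\mathcal{I}}$; and the $r$-constraints are inherited from $\Gamma^{\mathcal{I}}$, so the homotopy stays admissible on $\partial\Omega^{\mathcal{I}}$ for all $\alpha\geq 0$. This gives
\begin{equation*}
deg(Id-\Phi,\Omega^{\mathcal{I}},0)=deg(Id-\Phi-\alpha v,\Omega^{\mathcal{I}},0),\qquad\forall\,\alpha\geq 0.
\end{equation*}

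The final and decisive step is to show that for $\alpha$ large enough the equation $u=\Phi(u)+\alpha v$ has \emph{no} solution in $\overline{\Omega^{\mathcal{I}}}$ at all, so that the degree on the right-hand side is $0$; this is exactly the content I would extract from the second assertion of Theorem~\ref{deFigueiredo2}, whose hypotheses are verified by the two facts above (admissibility of $F(u,\alpha)=\Phi(u)+\alpha v$ on the boundary, plus the no-solution-for-large-$\alpha$ property). The no-solution estimate is obtained by the same Sturm-type comparison used in Lemma~\ref{lemmaRJ} and in the proof of $(iii)$ in Theorem~\ref{Th+LinCond}: pairing the equation against a fixed positive eigenfunction on some $I_i$ with $i\in\mathcal{I}$ forces a lower bound that grows linearly in $\alpha$, contradicting the a priori upper bound $R$ on $I_i$ valid throughout $\Omega^{\mathcal{I}}$. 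The main obstacle I anticipate is handling the \emph{unboundedness} of $\Omega^{\mathcal{I}}$ correctly: since the set is not bounded, ordinary Leray--Schauder degree is not directly applicable, and one must work inside the extended degree framework of the Appendix (Theorem~\ref{deFigueiredo2}), checking carefully that the forcing term $\alpha v$ does not create spurious solutions on the unbounded directions of the boundary—this is precisely where the convexity of solutions on $I\setminus A_{\mathcal{I}}$ and the a priori bounds of Lemma~\ref{lemmaRJ} must be combined to control $u$ on all of $I$, not merely on $A_{\mathcal{I}}$.
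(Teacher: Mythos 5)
Your proposal is correct and takes essentially the same route as the paper: the paper also sets $F(u,\alpha)=\Phi(u)+\alpha v$ with $v$ the Green operator applied to a non-negative function supported on $\bigcup_{i\in\mathcal{I}}I_{i}$ (it uses $\sum_{i\in\mathcal{I}}\varphi_{i}\mathbbm{1}_{I_{i}}$ instead of $\mathbbm{1}_{A_{\mathcal{I}}}$, an inessential difference), and verifies conditions $(i)$, $(ii)$, $(iii)$ of Theorem~\ref{deFigueiredo2} exactly as you outline. The only imprecisions are cosmetic: the vanishing-for-large-$\alpha$ step comes from condition $(iii)$ of Theorem~\ref{deFigueiredo2} (not a ``second assertion'', which belongs to Theorem~\ref{deFigueiredo}), and convexity holds on $I\setminus\bigcup_{i=1}^{n}I_{i}$ while on the intervals $I_{i}$ with $i\notin\mathcal{I}$ the control comes from the $r$-constraint guaranteed by the admissibility hypothesis.
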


\begin{proof}
The proof combines some arguments previously developed along the proofs of Lemma~\ref{lemmaR} and Theorem~\ref{Th+LinCond}.
In order to simplify the notation we set $A:=\bigcup_{i\in\mathcal{I}} I_{i}$.

For each index $i\in\mathcal{I}$, we define $\lambda_{I_{i}}:= \mu_{1}^{I_{i}}(1) = (\pi/{\it meas}(I_{i}))^{2}$ and
we denote by $\varphi_{i}$ the positive eigenfunction of
\begin{equation*}
\begin{cases}
\, \varphi''+\lambda_{I_{i}}\varphi=0 \\
\, \varphi|_{\partial I_{i}}=0,
\end{cases}
\end{equation*}
with $\|\varphi\|_{\infty}=1$.
Then $\varphi_{i}(x)\geq 0$, for all $x\in I_{i}$.
We denote by $\mathbbm{1}_{I_{i}}$ the characteristic function of the interval $I_{i}$.

Next, we define
\begin{equation*}
v(x):= \int_{I}G(x,s)\biggl{(}\sum_{i\in\mathcal{I}}\varphi_{i}(s)\mathbbm{1}_{I_{i}}(s)\biggr{)} ~\!ds
\end{equation*}
and introduce the operator
$F\colon \mathcal{C}(I)\times\mathopen[0,+\infty\mathclose[\to \mathcal{C}(I)$, as
\begin{equation*}
F(u,\alpha):=\Phi(u)+\alpha v.
\end{equation*}

To prove our claim, we check $(ii)$ and $(iii)$ of Theorem~\ref{deFigueiredo2}
(clearly, $F(u,0) = \Phi(u)$, so that $(i)$ is trivially satisfied).

\medskip

\noindent
\textit{Proof of $(ii)$.\,} Fix $\alpha \geq 0$.
By the definition of $v(x)$ and the first part of Lemma~\ref{Maximum principle}, any nontrivial solution $u$ of
\begin{equation*}
u = F(u,\zeta),\quad \mbox{with } u\in {\text{cl}}(\Omega^{\mathcal{I}}), \; \zeta\in\mathopen[0,\alpha\mathclose],
\end{equation*}
is a non-negative solution of
\begin{equation}\label{BVP4.6}
\begin{cases}
\, u''+g(x,u)=0 \\
\, u(0)=u(L)=0
\end{cases}
\end{equation}
with
\begin{equation*}
g(x,s):= f(x,s) + \zeta \sum_{i\in\mathcal{I}}\varphi_{i}(x)\mathbbm{1}_{I_{i}}(x).
\end{equation*}
The hypothesis $u\in{\text{cl}}(\Omega^{\mathcal{I}})$ implies that $0 \leq u(x) \leq r$ for all $x\in I_{i}$,
if $i\notin\mathcal{I}$, and $0 \leq u(x) \leq R$ for all $x\in I_{i}$, if  $i\in\mathcal{I}$.

We first note that $0\leq u(x)<R$, for every $x\in A$, by the choice of $R \geq R^{*}$. Moreover the
admissibility of the triplets $(Id-\Psi_{g},\Gamma^{\mathcal{I}},0)$ implies that any $\Psi_{g}$ has no fixed points on $\partial\Gamma^{\mathcal{I}}$.
Then each non-negative solution of \eqref{BVP4.6} satisfies $0\leq u(x)<r$, for all $x\in\bigcup_{i\notin\mathcal{I}}I_{i}$.
We deduce that $u\in\Omega^{\mathcal{I}}$.

Since $g(x,s) = f(x,s) \leq 0$ for a.e.~$x\in I\setminus \bigcup_{i=1}^{n}I_{i}$ and for all $s\geq 0$,
by convexity, we conclude that
\begin{equation*}
\|u\|_{\infty} = \max_{x\in I} u(x) \leq R.
\end{equation*}
Then $(ii)$ is proved with $R_{\alpha} := R$.

\medskip

\noindent
\textit{Proof of $(iii)$.\,} Let us fix a constant $\alpha_{0}$ with
\begin{equation*}
\alpha_{0} > \max_{i\in\mathcal{I}} \dfrac{\lambda_{I_{i}}R L +\|\gamma_{R}\|_{L^{1}(I)}}{\|\varphi_{i}\|^{2}_{L^{2}(I_{i})}}.
\end{equation*}
Suppose, by contradiction, that there exist $\tilde{u}\in{\text{cl}}(\Omega^{\mathcal{I}})$ and $\tilde{\alpha} \geq \alpha_{0}$
such that $\tilde{u} = F(\tilde{u},\tilde{\alpha}) =\Phi(\tilde{u}) + \tilde{\alpha} v$.
Since $\Phi(0) = 0$, we have $\tilde{u}\not\equiv0$ and, as in the previous step, $\tilde{u}(x)$
is a non-negative solution of \eqref{BVP4.6} for
\begin{equation*}
g(x,s):= f(x,s) + \tilde{\alpha} \sum_{i\in\mathcal{I}}\varphi_{i}(x)\mathbbm{1}_{I_{i}}(x).
\end{equation*}

By assumption, $\mathcal{I}\neq\emptyset$. So, let us fix an index $k\in\mathcal{I}$ and set
$I_{k}:=\mathopen[x_{1},x_{2}\mathclose]$. Arguing as in the proof of Theorem~\ref{Th+LinCond}, we obtain
\begin{equation*}
\begin{aligned}
0  & \geq \tilde{u}(x_{2})\varphi'_{k}(x_{2})-\tilde{u}(x_{1})\varphi'_{k}(x_{1})
     = \Bigl{[}\tilde{u}(x)\varphi'_{k}(x)-\tilde{u}'(x)\varphi_{k}(x)\Bigr{]}_{x=x_{1}}^{x=x_{2}}
\\ & = \int_{I_{k}}\dfrac{d}{dx}\Bigl{[}\tilde{u}(x)\varphi'_{k}(x)-\tilde{u}'(x)\varphi_{k}(x)\Bigr{]} ~\!dx
     = \int_{I_{k}}\Bigl{[}\tilde{u}(x)\varphi''_{k}(x)-\tilde{u}''(x)\varphi_{k}(x)\Bigr{]} ~\!dx
\\ & = \int_{I_{k}}\Bigl{[}-\lambda_{I_{k}}\tilde{u}(x)\varphi_{k}(x)+f(x,\tilde{u}(x))\varphi_{k}(x) + \tilde{\alpha}(\varphi_{k}(x))^{2}\Bigr{]} ~\!dx
\\ & \geq -\lambda_{I_{k}}R(x_{2}-x_{1})-\|\gamma_{R}\|_{L^{1}(I_{k})} + \tilde{\alpha}\|\varphi_{k}\|^{2}_{L^{2}(I_{k})}
> 0,
\end{aligned}
\end{equation*}
a contradiction. Hence $F$ satisfies $(iii)$.
\end{proof}

Putting together Lemma~\ref{lemma-1} and Lemma~\ref{lemma0}
we can obtain results of multiplicity of positive solutions provided that we are able to show
that the topological degree on certain open sets is well defined. With this respect, observe that from
Lemma~\ref{lemmaRJ} we know that there are no positive solutions $u(x)$ with $\max_{x\in I_{i}} u(x) \geq R$. Thus,
we only have to show that the level $r$ is not achieved by the solutions $u(x)$ of $u = \Psi_{g}(u)$
for $x$ in some of the intervals $I_{i}$.

\begin{theorem}\label{Th-Multiplicity}
Let $f\colon\mathopen[0,L\mathclose]\times{\mathbb{R}}^{+}\to {\mathbb{R}}$ be an $L^{1}$-Carath\'{e}odory function
satisfying $(f^{*})$, $(f_{0}^{-})$, $(f_{0}^{+})$ and $(H)$ with $(f_{\infty})$.
Suppose that for every Carath\'{e}odory function $g\colon I\times{\mathbb{R}}^{+}\to{\mathbb{R}}$ satisfying \eqref{cond_g} and
for every $\emptyset\neq\mathcal{I}\subseteq\{1,\ldots,n\}$
the triplet $(Id-\Psi_{g},\Gamma^{\mathcal{I}},0)$ is admissible.
Then there exist at least $2^{n}-1$ positive solutions of the two-point BVP \eqref{two-pointBVP}.
\end{theorem}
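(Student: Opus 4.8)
The plan is to fix the full index set $\mathcal{I}=\{1,\ldots,n\}$ and to compute the Leray--Schauder degree of $Id-\Phi$ on each set $\Lambda^{\mathcal{J}}$, $\mathcal{J}\subseteq\{1,\ldots,n\}$, by feeding the hypotheses into Lemma~\ref{lemma0} and then into Lemma~\ref{lemma-1}. Once it is known that $deg(Id-\Phi,\Lambda^{\mathcal{J}},0)=(-1)^{\#\mathcal{J}}$ for every $\mathcal{J}$, a nontrivial fixed point of $\Phi$ must exist in each of the $2^{n}-1$ pairwise disjoint sets $\Lambda^{\mathcal{J}}$ with $\mathcal{J}\neq\emptyset$, and the maximum principle (Lemma~\ref{Maximum principle}) upgrades these to positive solutions of \eqref{two-pointBVP4}. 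Thus the whole statement reduces to verifying the hypotheses of Lemma~\ref{lemma-1}: the admissibility of the triplets $(Id-\Phi,\Lambda^{\mathcal{J}},0)$ and $(Id-\Phi,\Omega^{\mathcal{J}},0)$, together with $deg(Id-\Phi,\Omega^{\mathcal{J}},0)=0$ for $\mathcal{J}\neq\emptyset$.

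The degree vanishing and the admissibility of the $\Omega^{\mathcal{J}}$'s come essentially for free. For $\mathcal{J}\neq\emptyset$ this is exactly Lemma~\ref{lemma0}, whose hypothesis is the admissibility of $(Id-\Psi_{g},\Gamma^{\mathcal{J}},0)$ for all $g$ as in \eqref{cond_g}, which is precisely what we assume; while for $\mathcal{J}=\emptyset$ the triplet $(Id-\Phi,\Omega^{\emptyset},0)=(Id-\Phi,\Lambda^{\emptyset},0)$ is handled by \eqref{deg1}, i.e. it has degree $1$ by Lemma~\ref{lemmar}.

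The step that requires real work, and which I expect to be the main obstacle, is the admissibility of the $\Lambda^{\mathcal{J}}$'s: I must show that $\Phi$ has no fixed point on $\partial\Lambda^{\mathcal{J}}$ and that the fixed-point set in ${\text{cl}}(\Lambda^{\mathcal{J}})$ is bounded, so that the degree on this unbounded set is well defined (cf.~Appendix~\ref{section6}). A fixed point $u$ of $\Phi$ is a non-negative solution of \eqref{two-pointBVP4}; applying Lemma~\ref{lemmaRJ} with $g=f$ on each $I_{i}$ and using $R\geq R^{*}$, one gets $\max_{x\in I_{i}}u(x)<R$ for every $i$, so no boundary point at the level $R$ can be a fixed point, and, by the convexity of $u$ on the components of $I\setminus\bigcup_{i}I_{i}$ together with the Dirichlet conditions, $\|u\|_{\infty}=\max_{i}\max_{x\in I_{i}}u(x)\leq R$, which bounds the fixed-point set. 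It then remains to exclude fixed points at the level $r$: if $\max_{x\in I_{i}}u(x)=r$ for some $i$, set $\mathcal{K}:=\{\,j:\max_{x\in I_{j}}u(x)>r\,\}$, so that $i\notin\mathcal{K}$ and $u\in\partial\Gamma^{\mathcal{K}}$. If $\mathcal{K}\neq\emptyset$, then choosing $g=f$ (which satisfies \eqref{cond_g} and gives $\Psi_{f}=\Phi$) contradicts the assumed admissibility of $(Id-\Phi,\Gamma^{\mathcal{K}},0)$; if $\mathcal{K}=\emptyset$, then $\max_{x\in I_{j}}u(x)\leq r\leq r_{0}$ for all $j$, hence $\|u\|_{\infty}\leq r_{0}$ by convexity, and Lemma~\ref{lemmar0} forces $u\equiv0$, against $\max_{x\in I_{i}}u(x)=r>0$. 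This proves that $(Id-\Phi,\Lambda^{\mathcal{J}},0)$ is admissible for every $\mathcal{J}$.

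With all admissibility conditions and \eqref{deg2} in hand, Lemma~\ref{lemma-1} applied with $\mathcal{I}=\{1,\ldots,n\}$ yields $deg(Id-\Phi,\Lambda^{\mathcal{J}},0)=(-1)^{\#\mathcal{J}}$ for each $\mathcal{J}\subseteq\{1,\ldots,n\}$. For every nonempty $\mathcal{J}$ this degree is nonzero, so $\Phi$ has a fixed point $u_{\mathcal{J}}\in\Lambda^{\mathcal{J}}$; since the sets $\Lambda^{\mathcal{J}}$ are pairwise disjoint, the solutions obtained for the $2^{n}-1$ distinct nonempty $\mathcal{J}$ are pairwise distinct, and each is nontrivial because $\max_{x\in I_{i}}u_{\mathcal{J}}(x)>r>0$ for $i\in\mathcal{J}$. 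Finally, $(f_{0}^{-})$, $(f_{0}^{+})$ and Lemma~\ref{Maximum principle}$(ii)$ guarantee that each such $u_{\mathcal{J}}$ is strictly positive on $\mathopen]0,L\mathclose[$, which gives the desired $2^{n}-1$ positive solutions.
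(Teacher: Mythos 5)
Your proposal is correct and follows essentially the same route as the paper's own proof: you compute $deg(Id-\Phi,\Omega^{\mathcal{J}},0)$ via Lemma~\ref{lemma0} and \eqref{deg1}, feed this into the inductive Lemma~\ref{lemma-1}, establish admissibility of the $\Lambda^{\mathcal{J}}$'s through the a priori bound $R$ from Lemma~\ref{lemmaRJ} together with the exclusion of fixed points at level $r$ (using the assumed admissibility of the triplets $(Id-\Psi_{g},\Gamma^{\mathcal{K}},0)$ with $g=f$, and Lemma~\ref{lemmar0} for the case $\mathcal{K}=\emptyset$), and conclude by disjointness of the $\Lambda^{\mathcal{J}}$'s and the maximum principle. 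Your explicit case analysis at the level $r$ simply spells out what the paper compresses into the single sentence ``they can not achieve the radius $r$ by virtue of the admissibility of $(Id-\Psi_{g},\Gamma^{\mathcal{I}},0)$''.
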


\begin{proof}
First of all, we claim that the triplet $(Id-\Phi,\Lambda^{\mathcal{I}},0)$ is admissible
for all $\mathcal{I}\subseteq\{1,\ldots,n\}$. Indeed,
if $\mathcal{I}=\emptyset$ this is clear from \eqref{deg1}.
If $\mathcal{I}\neq\emptyset$, the claim follows since all possible fixed points of $\Phi$ are contained in $B(0,R)$
(as already observed) and they can not achieve the radius $r$ by virtue of the admissibility of $(Id-\Psi_{g},\Gamma^{\mathcal{I}},0)$
for all $\emptyset\neq\mathcal{I}\subseteq\{1,\ldots,n\}$.

{}From Lemma~\ref{lemma-1} and Lemma~\ref{lemma0} it follows that
\begin{equation*}
deg(Id-\Phi,\Lambda^{\mathcal{I}},0)\neq0, \quad \text{for all } \, \mathcal{I}\subseteq\{1,\ldots,n\}.
\end{equation*}
We obtain the claim noting that $0\notin\Lambda^{\mathcal{I}}$, for all $\emptyset\neq\mathcal{I}\subseteq\{1,\ldots,n\}$,
and using the fact that the number of nonempty subsets of a set with $n$ elements is $2^{n}-1$.
\end{proof}

\section{A special case}\label{section5}

In this section we provide an application of the existence and multiplicity results
obtained in Section~\ref{section3} and Section~\ref{section4} to the search of positive solutions
for a two-point BVP of the form
\begin{equation}\label{two-pointBVPag}
\begin{cases}
\, u''+a(x)g(u)=0 \\
\, u(0)=u(L)=0,
\end{cases}
\end{equation}
where we suppose that
$g\colon{\mathbb{R}}^{+}\to{\mathbb{R}}^{+}$ is a continuous function such
that
\begin{equation}\label{eq-g}
g(0)=0 \quad \mbox{and} \quad g(s)>0, \; \forall \, s>0.
\end{equation}
With the aim of providing a simplified exposition of our main result, we suppose that the weight function
$a\colon I\to{\mathbb{R}}$ is \textit{continuous}.
The more general case of an $L^{1}$-weight function can be treated as well with
minor modifications in the statements of the theorems (this will be briefly discussed in a
final remark). Since we are looking for \textit{positive solutions} of
\eqref{two-pointBVPag}, in order to avoid trivial situations, we suppose that
\begin{equation}\label{eq-a+}
a^{+}(x):= \max\{a(x),0\} \not\equiv 0.
\end{equation}
In the context of continuous functions  this is just the same as to assume $a(x_{0}) > 0$ for some $x_{0} \in I$.
As usual, we also set
$a^{-}(x):= \max\{-a(x),0\}$, so that $a(x) = a^{+}(x) - a^{-}(x)$.

In order to enter in the general setting of the previous sections for
\begin{equation}\label{eq-f=ag}
f(x,s):= a(x) g(s),
\end{equation}
we suppose that
\begin{equation*}
g_{0}:=\limsup_{s\to 0^{+}} \dfrac{g(s)}{s} < +\infty
\quad \mbox{and} \quad
g_{\infty}:=\liminf_{s\to +\infty} \dfrac{g(s)}{s} > 0.
\end{equation*}
In such a situation, we can extend $g(s)$ to the negative real line, by setting $g(s)=0$, $\forall \, s\leq 0$.
Then Lemma~\ref{Maximum principle} ensures that any nontrivial solution $u(x)$ of \eqref{two-pointBVPag}
satisfies $u(x) > 0$ for all $x\in \mathopen{]}0,L\mathclose{[}$ with $u'(0) > 0 > u'(L)$. Notice also that
for $f(x,s)$ defined as in \eqref{eq-f=ag} it follows that $(f_{0}^{-})$ is satisfied
for $q_{-}(x) := g_{0}\, a^{-}(x)$.

Now, we translate condition $(f_{0}^{+})$ in the new setting. From
\begin{equation*}
\limsup_{s\to 0^{+}} \dfrac{f(x,s)}{s} =
\limsup_{s\to 0^{+}} \dfrac{a(x) g(s)}{s} \leq g_{0}\, a^{+}(x), \quad \mbox{uniformly for all } x \in I,
\end{equation*}
we immediately conclude that $(f_{0}^{+})$ holds if and only if
\begin{equation*}
g_{0} < \lambda_{0},
\end{equation*}
where $\lambda_{0} > 0$ is the first eigenvalue of the eigenvalue problem
\begin{equation*}
\varphi'' + \lambda a^{+}(x) \varphi =0, \quad \varphi(0) = \varphi(L) = 0.
\end{equation*}

As a next step, we look for an equivalent formulation of conditions $(H)$ and $(f_{\infty})$ for $f(x,s)$ as in \eqref{eq-f=ag}.
Accordingly, we consider the following hypothesis on the weight function.
\begin{itemize}
\item[$(H')$] \textit{
Suppose there exists a finite sequence of $2n+2$ points in $\mathopen[0,L\mathclose]$ (possibly coincident)
\begin{equation*}
0=\tau_{0}\leq\sigma_{1}<\tau_{1}<\sigma_{2}<\tau_{2}<\ldots<\sigma_{n}<\tau_{n}\leq\sigma_{n+1}=L
\end{equation*}
such that
\begin{itemize}
\item[$\bullet\;$] $a(x)\geq0$, for all  $x\in\mathopen[\sigma_{i},\tau_{i}\mathclose]$, $i=1,\ldots,n$;
\item[$\bullet\;$] $a(x)\leq0$, for all $x\in\mathopen]\tau_{i},\sigma_{i+1}\mathclose[$, $i=0,1,\ldots,n$.
\end{itemize}
}
\end{itemize}
By assuming $(H')$ we implicitly suppose that $a(x)$ vanishes at the points
$x = \tau_{1}, \sigma_{2}, \ldots, \tau_{n-1}, \sigma_{n}$.
With a usual convention, if $\tau_{0} = \sigma_{1}$ (or $\tau_{n}= \sigma_{n+1}$)
the assumption $a(x) \leq 0$ on the first open interval (or on the last one, respectively) is vacuously satisfied.

\begin{remark}\label{rem-4.1}
The sign condition on the weight function allows the possibility that $a(x)$ may identically vanish in some subintervals
of $I$ (even infinitely many). Figure~\ref{fig-02} below shows a possible graph which is in agreement with assumption $(H')$.
$\hfill\lhd$
\end{remark}

\begin{figure}[h!]
\centering
\includegraphics[width=0.95\textwidth]{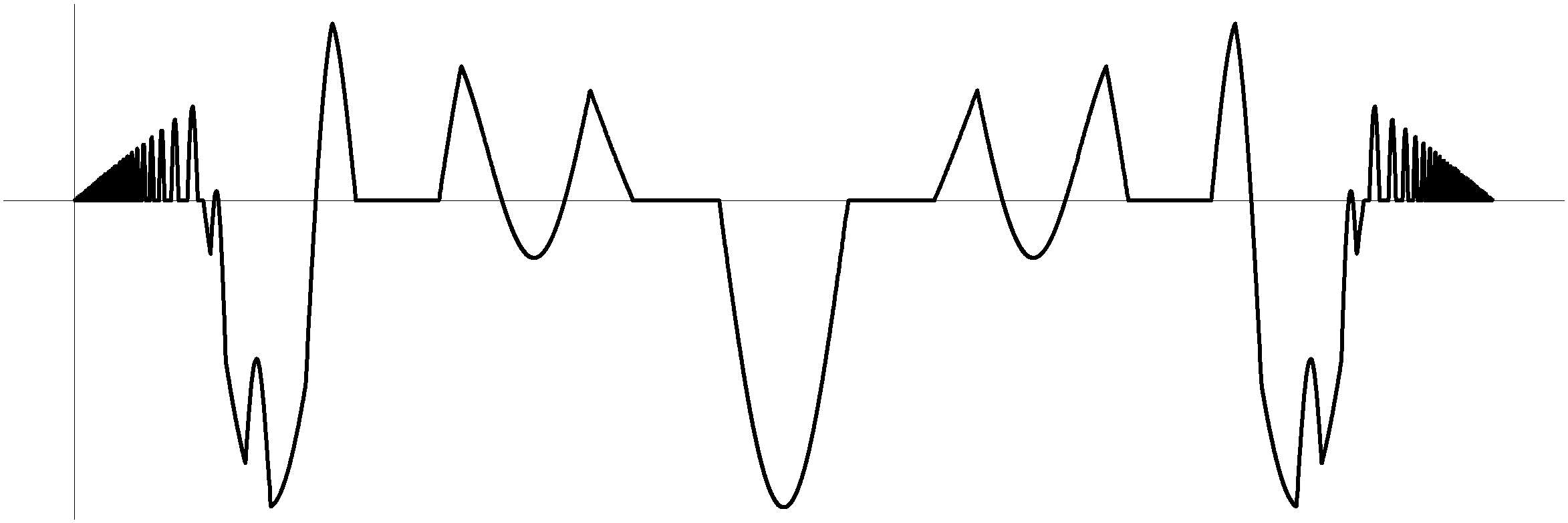}
\caption{\small{The figure shows the graph of the continuous function $a(x):=\max\{0,x(1-x)\sin(3/x(1-x))\}-\max\{0,-\sin(11x\pi)/4\}$
on $\mathopen{]}0,1\mathclose{[}$ and defined as $0$ at the endpoints. This is an example of weight function that satisfies
$(H')$ for an obvious choice of the points $\sigma_{i}$ and $\tau_{i}$
and, moreover, it has infinitely many humps.}}
\label{fig-02}
\end{figure}

Given any $a(x)$ satisfying $(H')$, consistently with the notation introduced in Section~\ref{section2}, we set
\begin{equation*}
I_{i}:=\mathopen[\sigma_{i},\tau_{i}\mathclose], \qquad i=1,\ldots,n.
\end{equation*}
For such a choice of the weight function $a(x)$, we have that $(H)$ is satisfied for $f(x,s)$ as in \eqref{eq-f=ag}.
Moreover, for every $i=1,\ldots,n$, we obtain
\begin{equation*}
\liminf_{s\to +\infty} \dfrac{f(x,s)}{s} = \liminf_{s\to +\infty} \dfrac{a(x) g(s)}{s} \geq g_{\infty} \, a(x),
\quad \mbox{uniformly for all } x \in I_{i}.
\end{equation*}
Thus we conclude that $(f_{\infty})$ holds provided that
\begin{equation*}
a(x)\not\equiv 0 \; \mbox{ on } I_{i} \quad \mbox{and} \quad g_{\infty} > \lambda_{1}^{i}, \quad \forall \, i=1,\ldots,n,
\end{equation*}
where $\lambda_{1}^{i} > 0$ is the first eigenvalue of the eigenvalue problem
\begin{equation*}
\varphi'' + \lambda a(x) \varphi =0, \quad \varphi|_{\partial I_{i}} = 0.
\end{equation*}
Notice that, as a consequence of Sturm theory (see for instance \cite{CoddingtonLevinson, Zettl}), we know that
\begin{equation*}
\lambda_{0} \leq \lambda_{1}^{i}, \quad \forall \, i=1,\ldots,n.
\end{equation*}

\subsection{Existence results}\label{subsec-5.1}

Now we are in a position to present some corollaries of the existence results in Section~\ref{section3} for problem
\eqref{two-pointBVPag}. In this context, Theorem~\ref{Theorem4.1GHZ} implies the following

\begin{theorem} \label{th-5.1}
For $g(s)$ and $a(x)$ as above, suppose that
\begin{equation*}
g_{0} < \lambda_{0}
\end{equation*}
and $a(x)\not\equiv0$ on $I_{i}$, for each $i = 1,\ldots,n$, with
\begin{equation*}
g_{\infty}  > \max_{i=1,\ldots,n} \lambda_{1}^{i}.
\end{equation*}
Then problem \eqref{two-pointBVPag} has at least one positive solution.
\end{theorem}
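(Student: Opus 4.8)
This statement is presented as a corollary of Theorem~\ref{Theorem4.1GHZ}, so my plan is to specialize that theorem to the nonlinearity $f(x,s) = a(x)g(s)$ by checking its four hypotheses $(f^{*})$, $(f_{0}^{-})$, $(f_{0}^{+})$ and $(H)$ with $(f_{\infty})$. Much of this verification is already recorded in the discussion preceding the statement, so the proof will mostly consist of assembling those remarks into a single argument.

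I would begin with the conditions at the origin. For $(f^{*})$ I would simply observe that $g(0) = 0$ forces $f(x,0) = a(x)g(0) = 0$ for all $x \in I$. For $(f_{0}^{-})$ I would take $q_{-}(x) := g_{0}\,a^{-}(x)$, as already indicated, and for $(f_{0}^{+})$ I would take $q_{0}(x) := g_{0}\,a^{+}(x)$, using $\limsup_{s\to 0^{+}} f(x,s)/s \leq g_{0}\,a^{+}(x)$ uniformly in $x$. The point to check here is that the hypothesis $g_{0} < \lambda_{0}$ translates into the required $\mu_{1}(q_{0}) > 1$: since $\lambda_{0} = \mu_{1}(a^{+})$, the homogeneity of the principal eigenvalue gives $\mu_{1}(g_{0}\,a^{+}) = \lambda_{0}/g_{0}$, and $\mu_{1}(q_{0}) > 1$ is then equivalent to $g_{0} < \lambda_{0}$.

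Next I would address the conditions at infinity. The sign structure $(H)$ comes for free from hypothesis $(H')$ on the weight, with $I_{i} = \mathopen[\sigma_{i},\tau_{i}\mathclose]$. For $(f_{\infty})$ I would set $q_{\infty}^{i}(x) := g_{\infty}\,a(x)$ on $I_{i}$; this is non-negative because $a \geq 0$ there, and it is not identically zero precisely by the assumption $a \not\equiv 0$ on $I_{i}$, while $\liminf_{s\to +\infty} f(x,s)/s \geq g_{\infty}\,a(x)$ supplies the required lower bound. Once again the eigenvalue condition $\mu_{1}^{I_{i}}(q_{\infty}^{i}) < 1$ reduces, via the scaling identity $\mu_{1}^{I_{i}}(g_{\infty}\,a) = \lambda_{1}^{i}/g_{\infty}$, to $g_{\infty} > \lambda_{1}^{i}$, which holds for every $i$ because $g_{\infty} > \max_{i} \lambda_{1}^{i}$.

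Having verified all the hypotheses, I would invoke Theorem~\ref{Theorem4.1GHZ} to conclude the existence of at least one positive solution of \eqref{two-pointBVPag}. There is no genuine obstacle in this argument; the only delicate ingredient is the eigenvalue homogeneity $\mu_{1}^{J}(c\,q) = \mu_{1}^{J}(q)/c$ for $c > 0$, which is exactly what converts the two growth thresholds on $g$ into the eigenvalue inequalities demanded by Lemma~\ref{lemmar} and Lemma~\ref{lemmaR}.
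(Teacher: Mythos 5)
Your proof is correct and is essentially the paper's own argument: the paper states Theorem~\ref{th-5.1} as a direct consequence of Theorem~\ref{Theorem4.1GHZ}, and the verification of $(f^{*})$, $(f_{0}^{-})$, $(f_{0}^{+})$, $(H)$ and $(f_{\infty})$ for $f(x,s)=a(x)g(s)$ --- including the eigenvalue homogeneity $\mu_{1}^{J}(c\,q)=\mu_{1}^{J}(q)/c$ that converts $g_{0}<\lambda_{0}$ and $g_{\infty}>\max_{i}\lambda_{1}^{i}$ into $\mu_{1}(q_{0})>1$ and $\mu_{1}^{I_{i}}(q_{\infty}^{i})<1$ --- is exactly the discussion that precedes the statement in Section~\ref{section5}. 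The only caveat, shared with the paper's own exposition, is that in the degenerate cases $g_{0}=0$ or $g_{\infty}=+\infty$ the choices $q_{0}=g_{0}\,a^{+}$ or $q_{\infty}^{i}=g_{\infty}\,a$ must be replaced by $\varepsilon\, a^{+}$ (with small $\varepsilon>0$) or $C\, a$ restricted to $I_{i}$ (with finite $C>\max_{i}\lambda_{1}^{i}$), respectively, so that the functions are admissible and the eigenvalue inequalities still hold.
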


As an obvious corollary of Theorem~\ref{th-5.1}, we have that if $g_{0} = 0$ and $g_{\infty} = +\infty$, then a positive solution
always exists, provided that $a(x)\not\equiv0$ on $I_{i}$ (see Corollary~\ref{Corollary4.2GHZ} and also
compare to \cite[Corollary~4.2]{GaudenziHabetsZanolin3}).

\begin{remark}\label{rem-5.1}
First of all, we observe that Theorem~\ref{th-5.1} (as well as the more general Theorem~\ref{Theorem4.1GHZ}) applies
in a trivial manner if $a(x)\geq 0$ (and $a(x)\not\equiv0$) on $I$. Indeed, as already remarked
after the introduction of condition $(H)$, such hypothesis is satisfied also when $f(x,s)\geq 0$ for a.e.~$x\in I$
and for each $s \geq 0$.

In the case of a sign-changing weight function $a(x)$, namely when $a^{+}(x)\not\equiv 0$ and also
$a^{-}(x)\not\equiv 0$, the choice of the intervals $I_{i}$ is mandatory when the set $a^{-1}(0)=\{x\in I \colon a(x) = 0\}$
is made by a finite number of simple zeros. In such a situation, $a(x) > 0$ on $\mathopen]\sigma_{i},\tau_{i}\mathclose[$
and  $a(x) < 0$ on $\mathopen]\tau_{i},\sigma_{i+1}\mathclose[$. The choice of the intervals $I_{i}$ is also determined
if $a^{-1}(0)$ is finite.
However, generally speaking, there is some arbitrariness in the choice of
the way in which we separate the intervals of non-negativity to the intervals of non-positivity of $a(x)$.
This happens, for instance, when $a^{-1}(0)$ contains an interval. In such a situation, the manner in which
we define the intervals $I_{i}$ affects the computation of the eigenvalues $\lambda_{i}$ and hence the lower bound for $g_{\infty}$.

With this respect we exhibit a simple example.
Let us consider the following weight function
\begin{equation*}
a_{\varepsilon}(x) =
\begin{cases}
\, 1,  & \text{if }  x\in \Bigl{[}0,\dfrac{\pi}{2} -\varepsilon\Bigr{]}\cup \Bigl{[}\dfrac{\pi}{2} + \varepsilon, \pi\Bigr{]}; \vspace*{1pt} \\
\, 0,  & \text{if }  x\in \Bigl{]}\dfrac{\pi}{2} -\varepsilon,\dfrac{\pi}{2} + \varepsilon \Bigr{[}; \\
\, -1,     & \text{if }  x\in \mathopen]\pi,2\pi\mathclose];
\end{cases}
\end{equation*}
where $0 < \varepsilon < \pi/2$ is fixed. For convenience, we have chosen for our example a (discontinuous) step function,
however, our argument can be adapted in the continuous case via a smoothing procedure on $a_{\varepsilon}(x)$.
For this weight function we can take
$I_{1} = \mathopen{[}\sigma_{1},\tau_{1}\mathclose{]} = \mathopen{[}0,\frac{\pi}{2} -\varepsilon\mathclose{]}$
and $I_{2} = \mathopen{[}\sigma_{2},\tau_{2}\mathclose{]} = \mathopen{[}\frac{\pi}{2} + \varepsilon, \pi\mathclose{]}$.
In this situation, we have $a_{\varepsilon}(x) = 0$ on $\mathopen]\tau_{1},\sigma_{2}\mathclose[ =
\mathopen]\frac{\pi}{2} -\varepsilon,\frac{\pi}{2} + \varepsilon\mathclose[$ and $a_{\varepsilon}(x) < 0$ on
$\mathopen]\tau_{2},\sigma_{3}\mathclose] = \mathopen]\pi,2\pi\mathclose]$. Moreover,
\begin{equation*}
\lambda_{1}^{1} = \lambda_{1}^{2} = \left(\dfrac{2\pi}{\pi - 2\varepsilon}\right)^{\!2} > 4.
\end{equation*}
On the other hand, for the same weight function, we can also take
$I_{1} = \mathopen[\sigma_{1},\tau_{1}\mathclose] = \mathopen[0,\pi\mathclose]$ as unique interval of non-negativity.
To compute $\lambda_{1}^{1}$, we have to determine the first eigenvalue of
$\varphi'' + \lambda a_{\varepsilon}(x)\varphi = 0$ with $\varphi(0) = \varphi(\pi) = 0$.
For $\varepsilon > 0$ very close to zero, we find that $\lambda_{1}^{1} $ is close to $1$ (and for sure less than $4$).
As a consequence,  with this second choice of the interval, we provide a better lower bound for $g_{\infty}$.

The above example shows that Theorem~\ref{th-5.1} is a slightly more general version of \cite[Theorem~4.1]{GaudenziHabetsZanolin3},
in the sense that we can improve the lower bound on $g_{\infty}$ (at least for some particular weight functions
which vanish on their intervals of non-negativity).
$\hfill\lhd$
\end{remark}

Another way to improve the lower bound on $g_{\infty}$ of Theorem~\ref{th-5.1} is feasible by applying Theorem~\ref{Th+LinCond}.
However, this requires  to impose a further growth assumption on $g(s)$.

\begin{theorem} \label{th-5.2}
For $g(s)$ and $a(x)$ as above, suppose that
\begin{equation*}
g_{0} < \lambda_{0}
\end{equation*}
and $a(x)\not\equiv0$ on $I_{i}$, for each $i=1,\ldots,n$, with
\begin{equation*}
g_{\infty}  > \min_{i=1,\ldots,n} \lambda_{1}^{i}
\quad \mbox{and} \quad
\limsup_{s\to +\infty}\dfrac{g(s)}{s} < +\infty.
\end{equation*}
Then problem \eqref{two-pointBVPag} has at least one positive solution.
\end{theorem}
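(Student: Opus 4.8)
The plan is to obtain Theorem~\ref{th-5.2} as a direct application of Theorem~\ref{Th+LinCond}, the existence result that requires the crossing condition on a single subinterval at the cost of an at-most-linear growth assumption elsewhere. First I would record that, exactly as in the discussion preceding Theorem~\ref{th-5.1}, the choice $f(x,s) = a(x)g(s)$ satisfies $(f^{*})$ since $f(x,0) = a(x)g(0) = 0$, satisfies $(f_{0}^{-})$ with $q_{-}(x) := g_{0}\, a^{-}(x)$, and satisfies $(f_{0}^{+})$ precisely because we assume $g_{0} < \lambda_{0}$.

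The decisive step is the selection of the subinterval $J$. Since $g_{\infty} > \min_{i=1,\ldots,n}\lambda_{1}^{i}$, I may fix an index $i_{0}$ with $g_{\infty} > \lambda_{1}^{i_{0}}$ and set $J := I_{i_{0}} = \mathopen[\sigma_{i_{0}},\tau_{i_{0}}\mathclose]$. On $J$ one has $a(x) \geq 0$, hence $f(x,s) = a(x)g(s) \geq 0$ for all $s \geq 0$, as Theorem~\ref{Th+LinCond} demands. Taking $q_{\infty}(x) := g_{\infty}\, a(x)$, the computation already carried out in this subsection gives $\liminf_{s\to+\infty} f(x,s)/s \geq g_{\infty}\, a(x) = q_{\infty}(x)$ uniformly a.e.\ on $J$. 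To check $\mu_{1}^{J}(q_{\infty}) < 1$ I would use the scaling identity for the first eigenvalue: since $\varphi'' + \mu\, g_{\infty} a(x)\varphi = 0$ is the same equation as $\varphi'' + (\mu g_{\infty})\, a(x)\varphi = 0$, one has $\mu_{1}^{J}(g_{\infty} a) = \lambda_{1}^{i_{0}}/g_{\infty}$, and the strict inequality $g_{\infty} > \lambda_{1}^{i_{0}}$ yields exactly $\mu_{1}^{J}(q_{\infty}) = \lambda_{1}^{i_{0}}/g_{\infty} < 1$, so $(f_{\infty}^{J})$ holds.

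It remains to verify the growth condition $(G)$ on $I \setminus J$. From $\limsup_{s\to+\infty} g(s)/s < +\infty$ there are constants $M, C > 0$ with $g(s) \leq M s$ for every $s \geq C$, so that $|f(x,s)| = |a(x)|\, g(s) \leq M\bigl(\max_{x\in I}|a(x)|\bigr)\, s$ for a.e.\ $x \in I \setminus J$ and all $s \geq C$; thus $(G)$ holds with $a_{G} \equiv 0$ and $b_{G}(x) := M|a(x)|$, which lies in $L^{1}(I \setminus J, {\mathbb{R}}^{+})$ because $a$ is continuous on the compact interval $I$. All hypotheses of Theorem~\ref{Th+LinCond} being in force, a positive solution of \eqref{two-pointBVPag} exists.

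I do not anticipate a serious technical obstacle: the whole point of the statement is that the superlinear-type crossing is needed on only one interval of positivity---this is why $\min$ replaces $\max$---while the contribution of the nonlinearity on all remaining intervals is tamed by the linear bound entering $(G)$. The one place where care is required is the eigenvalue scaling $\mu_{1}^{J}(g_{\infty} a) = \lambda_{1}^{i_{0}}/g_{\infty}$, which is what converts the hypothesis on $g_{\infty}$ into the inequality $\mu_{1}^{J}(q_{\infty}) < 1$ required by Theorem~\ref{Th+LinCond}.
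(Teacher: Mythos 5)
Your proposal is correct and takes essentially the same route as the paper: Theorem~\ref{th-5.2} is stated there precisely as an application of Theorem~\ref{Th+LinCond} (see the sentence preceding its statement and Remark~\ref{rem-5.2}), with $J$ taken to be one interval of positivity $I_{i_{0}}$ on which $g_{\infty} > \lambda_{1}^{i_{0}}$. Your verification of $(f_{\infty}^{J})$ via the eigenvalue rescaling $\mu_{1}^{J}(g_{\infty}a) = \lambda_{1}^{i_{0}}/g_{\infty} < 1$ and of $(G)$ via the global linear bound $g(s)\leq Ms$ for $s\geq C$ is exactly the intended argument.
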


\begin{remark}\label{rem-5.2}
We have stated Theorem~\ref{th-5.2} in a form which is suitable for a comparison with Theorem~\ref{th-5.1}. Actually,
the result holds even we do not assume $(H')$, but we just suppose that there exists an interval $J\subseteq I$
where $a(x)\geq 0$ and $a(x)\not\equiv 0$, and $g_{\infty} > \lambda_{1}^{J}$, where $\lambda_{1}^{J}$ is the first eigenvalue
of the eigenvalue problem $\varphi'' + \lambda a(x) \varphi =0$, $\varphi|_{\partial J} = 0$.
$\hfill\lhd$
\end{remark}

A simple corollary of Theorem~\ref{th-5.1} can be obtained when $g_{0} = 0$ and $g_{\infty} > 0$
for the problem
\begin{equation*}
\begin{cases}
\, u''+ \nu a(x)g(u)=0 \\
\, u(0)=u(L)=0.
\end{cases}
\end{equation*}
Indeed, in such a case, we have the existence of at least one positive solution for each $\nu > 0$ sufficiently large.

\subsection{Multiplicity of positive solutions}\label{subsec-5.2}

Now we show how the main results of Section~\ref{section4} can be applied when $f(x,s) = a(x) g(s)$. To this aim,
besides \eqref{eq-a+}, we also suppose
\begin{equation}\label{eq-a-}
a^{-}(x)\not\equiv 0.
\end{equation}
Consistently with assumption $(H')$,
we select, without loss of generality, the endpoints of the intervals $I_{i}$ in such a manner that
$a(x)\not\equiv0$ on each of the subintervals $\mathopen[\sigma_{i},\tau_{i}\mathclose]$ and, moreover,
$a(x)\not\equiv0$ on all left neighborhoods of $\sigma_{i}$ and on all right neighborhoods of $\tau_{i}$.

In order to explain the rule that we have decided to follow so as to determine the endpoints of the intervals,
let us consider the following weight function on the interval $I = \mathopen[0,7\pi\mathclose]$
\begin{equation*}
a(x) =
\begin{cases}
\, \sin x,     & \text{if }  x\in \mathopen[0,\pi\mathclose]\cup \mathopen[3\pi,4\pi\mathclose] \cup \mathopen[6\pi,7\pi\mathclose];\\
\, 0,          & \text{if }  x \in \mathopen[\pi,3\pi\mathclose] \cup \mathopen[4\pi,6\pi\mathclose].
\end{cases}
\end{equation*}
Among the various possibilities that one could adopt to choose the endpoints of the intervals according to condition $(H')$,
the following choice would fit with the above convention:
$\sigma_{1} = 0$, $\tau_{1} = 3\pi$, $\sigma_{2} = 4\pi$, $\tau_{2} = 7\pi$.

\noindent
To discuss another example, let us consider a
function with a graph as that of Figure~\ref{fig-02}.
It is clear that it satisfies \eqref{eq-a+} and \eqref{eq-a-},
provided that we adopt a suitable choice of the points $\sigma_{i}$ and $\tau_{i}$.
Typically, we shall proceed in the following manner: if there is an interval where $a(x)\equiv 0$ between
an interval where $a(x) > 0$ and an interval where $a(x) < 0$, we choose $\sigma_{i}$ and $\tau_{i}$ in such a way
that $a(x) < 0$ on $\mathopen]\tau_{i},\sigma_{i+1}\mathclose[$ and we merge the interval where $a(x)\equiv 0$
to an adjacent interval where $a(x)\geq 0$.

\medskip

We need also to introduce a further notation. For any weight function $a(x)$ satisfying $(H')$ (with the endpoints
$\sigma_{i}, \tau_{i}$ chosen as described above), we set
\begin{equation*}
a_{\mu}(x) := a^{+}(x) - \mu a^{-}(x),
\end{equation*}
where $\mu > 0$ is a parameter. Notice that $a(x) = a_{\mu}(x)$ for $\mu =1$ and, moreover, for every
$\mu > 0$, it holds that $a_{\mu}(x)$ satisfies $(H')$ with the same $\sigma_{i}$ and $\tau_{i}$ chosen for $a(x)$.

The introduction of the parameter $\mu$ is made only with the purpose to clarify the role of the negative humps
of $a(x)$ in order to produce multiplicity results. In other words, when we require that $\mu > 0$ is sufficiently large,
we have a more precise manner to express the intuitive fact that the negative humps of $a(x)$ are great enough.

Now we are in a position to present our main multiplicity result for the boundary value problem
\begin{equation}\label{two-pointBVPmu}
\begin{cases}
\, u''+a_{\mu}(x)g(u)=0 \\
\, u(0)=u(L)=0.
\end{cases}
\end{equation}
Recall that we are assuming that $a\colon I\to {\mathbb{R}}$
is a continuous weight function satisfying \eqref{eq-a+}, \eqref{eq-a-} and $(H')$ (with the above convention)
and $g(s)$ is continuous and satisfying \eqref{eq-g}.

\begin{theorem}\label{th-5.3}
Suppose that
\begin{equation*}
g_{0} < \lambda_{0}
\quad\mbox{and} \quad
a(x)\not\equiv 0 \mbox{ on each } I_{i} \mbox{ with }
g_{\infty} > \max_{i=1,\ldots,n} \lambda_{1}^{i}.
\end{equation*}
Then there exists $\mu^{*}>0$ such that, for each $\mu > \mu^{*}$,
problem \eqref{two-pointBVPmu} has at least $2^{n} -1$ positive solutions.
\end{theorem}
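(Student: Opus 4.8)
The plan is to obtain Theorem~\ref{th-5.3} as a corollary of the abstract multiplicity statement Theorem~\ref{Th-Multiplicity}, applied to $f(x,s):=a_{\mu}(x)g(s)$. First I would record that, for every fixed $\mu>0$, this $f$ is $L^{1}$-Carath\'eodory and satisfies the standing hypotheses of Theorem~\ref{Th-Multiplicity}, using the translations already carried out in the preamble of Section~\ref{section5}. Indeed $f(x,0)=a_{\mu}(x)g(0)=0$ gives $(f^{*})$; the bound $q_{-}(x)=g_{0}\mu a^{-}(x)\in L^{1}$ gives $(f_{0}^{-})$; and since $a_{\mu}^{+}=a^{+}$ is independent of $\mu$, the eigenvalue $\lambda_{0}$ is unchanged and $g_{0}<\lambda_{0}$ yields $(f_{0}^{+})$. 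The sign structure $(H')$ of $a_{\mu}$ (identical to that of $a$, with $I_{i}:=[\sigma_{i},\tau_{i}]$) gives $(H)$. Finally, on each positive hump $I_{i}$ we have $a_{\mu}\equiv a$, so the eigenvalues $\lambda_{1}^{i}$ do not depend on $\mu$, and the hypothesis $a\not\equiv0$ on $I_{i}$ together with $g_{\infty}>\max_{i}\lambda_{1}^{i}\geq\lambda_{1}^{i}$ furnishes $(f_{\infty})$ via $\liminf_{s\to+\infty}f(x,s)/s\geq g_{\infty}a(x)$. Crucially, \emph{all} of these verifications are uniform in $\mu$, so the only place where ``$\mu$ large'' can and must enter is the remaining admissibility requirement of Theorem~\ref{Th-Multiplicity}.

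Thus the whole content of the theorem reduces to proving the following claim: there exists $\mu^{*}>0$ such that for every $\mu>\mu^{*}$, every nonempty $\mathcal{I}\subseteq\{1,\dots,n\}$, and every Carath\'eodory $g$ satisfying \eqref{cond_g}, the triplet $(Id-\Psi_{g},\Gamma^{\mathcal{I}},0)$ is admissible. Concretely, a fixed point $u=\Psi_{g}(u)$ on $\partial\Gamma^{\mathcal{I}}$ is, by Lemma~\ref{Maximum principle}, a non-negative solution of $u''+\tilde g(x,u)=0$, $u(0)=u(L)=0$, which satisfies $\max_{x\in I_{i}}u(x)\leq r$ for all $i\notin\mathcal{I}$ with equality $\max_{x\in I_{j}}u(x)=r$ for some $j\notin\mathcal{I}$. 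Since $g\geq f$ only on the humps of $\mathcal{I}$, Lemma~\ref{lemmaRJ} already forces $\max_{x\in I_{i}}u<R$ there \emph{uniformly in $g$}, and convexity of $u$ on the negative humps then yields the global bound $\|u\|_{\infty}\leq R$. So the claim is exactly the assertion that, for $\mu$ large, the intermediate level $r$ cannot be realized as a hump-maximum on an index outside $\mathcal{I}$.

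I would establish this by a contradiction-and-compactness argument. Assuming it fails, one gets $\mu_{m}\to+\infty$, a fixed index set $\mathcal{I}$ and index $j\notin\mathcal{I}$ (finitely many choices), admissible functions $g_{m}$, and solutions $u_{m}$ with $\max_{I_{j}}u_{m}=r$ and $\|u_{m}\|_{\infty}\leq R$. On $I_{j}$ the equation reads $u_{m}''=-a(x)g(u_{m})$ with no enlargement and $0\leq u_{m}\leq r$, so $\|u_{m}''\|_{L^{\infty}(I_{j})}$ is bounded independently of $m$; by Ascoli--Arzel\`a a subsequence converges in $C^{1}(I_{j})$ to a solution $u_{*}$ of $u_{*}''=-a(x)g(u_{*})$ with $0\leq u_{*}\leq r$ and $\max_{I_{j}}u_{*}=r$. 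The heart of the proof is a \emph{squeezing estimate} on the negative humps adjacent to $I_{j}$: there $u_{m}$ is convex and enters/leaves $I_{j}$ with $C^{1}$-data bounded independently of $\mu_{m}$, while $u_{m}''=\mu_{m}a^{-}(x)g(u_{m})$ blows up the curvature wherever $u_{m}$ stays bounded below; integrating this against the bounded boundary slopes forces $u_{m}(\sigma_{j}),u_{m}(\tau_{j})\to0$. Consequently $u_{*}$ is a \emph{nontrivial} solution of the Dirichlet problem on $I_{j}$ (where $a=a^{+}$ and, by domain monotonicity of eigenvalues, $g_{0}<\lambda_{0}\leq\lambda_{0}^{I_{j}}$) with $\max_{I_{j}}u_{*}=r\leq r_{0}$; this contradicts the rigidity of Lemma~\ref{lemmar0} applied on $I_{j}$, which forbids nonzero sub-threshold solutions.

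The hard part will be making this squeezing estimate rigorous and, above all, \emph{uniform over every admissible enlargement $g$}: on the humps belonging to $\mathcal{I}$ the nonlinearity $g$ is completely unconstrained above $f$, so a na\"ive ``large slope propagates and violates the $R$-bound'' argument fails, since a large $g$ can absorb arbitrarily steep slopes while keeping $u_{m}<R$. The estimate must therefore be localised to $I_{j}$ and its two neighbouring negative humps, exploiting only the $\mu$-independent $C^{1}$-data at $\partial I_{j}$, the convexity on the negative humps, and the fact that $a^{-}\not\equiv0$ on each side of $I_{j}$ (guaranteed by the endpoint convention fixed before \eqref{two-pointBVPmu}); the non-monotonicity of $g$ is handled by replacing $g$ with $\min_{[\delta,R]}g>0$ on the region where $u_{m}\geq\delta$. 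Once the claim is proved, Theorem~\ref{Th-Multiplicity} yields $2^{n}-1$ positive solutions of \eqref{two-pointBVPmu} for all $\mu>\mu^{*}$, completing the argument.
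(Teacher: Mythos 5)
Your proposal is correct, and it shares the paper's skeleton: the reduction to Theorem~\ref{Th-Multiplicity}, the observation that only the admissibility of the triplets $(Id-\Psi_{g},\Gamma^{\mathcal{I}},0)$ can depend on $\mu$, the uniform bound $\|u\|_{\infty}<R$ (Lemma~\ref{lemmaRJ} plus convexity on the negativity intervals, uniformly in the enlargement), the uniform slope bound on $\partial I_{j}$, and the convexity-plus-$\mu a^{-}$ blow-up mechanism on the adjacent negativity intervals. Where you genuinely diverge is in how the contradiction is closed. The paper is direct and quantitative: its \textit{Step 2} is a Sturm comparison on $I_{k}$ which keeps the nonzero boundary terms and yields $u(\sigma_{k})+u(\tau_{k})\geq c_{k}r$ for every such solution and every $\mu$; \textit{Step 3} then converts this positive boundary datum into an explicit threshold $\mu_{k}^{\pm}=(R+M_{k}L)/(\nu_{k}A_{k}^{\pm})$, so $\mu^{*}$ is computable. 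You argue instead by contradiction along a sequence $\mu_{m}\to+\infty$: the same blow-up mechanism, used in contrapositive form, forces $u_{m}(\sigma_{j}),u_{m}(\tau_{j})\to 0$, and Ascoli--Arzel\`a produces a limit $u_{*}$ solving the Dirichlet problem on $I_{j}$ with $\max_{I_{j}}u_{*}=r>0$, contradicting small-solution rigidity. Your route is softer---it recycles the rigidity of Lemma~\ref{lemmar0} instead of redoing a comparison with nonzero boundary terms---but it loses any explicit $\mu^{*}$ and requires subsequence bookkeeping (pigeonholing over the finitely many pairs $(\mathcal{I},j)$, and discarding the enlargements by localization, which you correctly identify as the crux of uniformity). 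One step must be made explicit in a write-up: Lemma~\ref{lemmar0} is stated on $I$, so ``applied on $I_{j}$'' means re-running its proof on $I_{j}$ for $f=a^{+}g$; this does work \emph{with the same radius} $r_{0}\geq r$, because the smallness condition $f(x,s)\leq(q_{0}(x)+\varepsilon)s$ for $0<s\leq r_{0}$ is inherited on subintervals, and domain monotonicity gives $\mu_{1}^{I_{j}}(q_{0}+\varepsilon)\geq\mu_{1}(q_{0}+\varepsilon)>1$. Your inequality $\lambda_{0}\leq\lambda_{1}^{j}$ is exactly this point, but without noting that the rigidity radius does not shrink in passing from $I$ to $I_{j}$, the contradiction with $\max_{I_{j}}u_{*}=r\leq r_{0}$ is not immediate.
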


\begin{proof} From $g_{0}<\lambda_{0}$, we can choose $\delta>0$ such that $g_{0}  <\lambda_{0} -\delta$.
Let $r_{0}>0$ be as in Lemma~\ref{lemmar} and fix $0<r \leq r_{0}$ such that
\begin{equation}\label{eq-delta}
\dfrac{g(s)}{s}<\lambda_{0}-\delta, \quad \forall \, 0<s\leq r.
\end{equation}
Let $R^{*}>0$ as in Lemma~\ref{lemmaR} and fix $R\geq R^{*}$.

Let $\mathcal{I}\subseteq\{1,\ldots,n\}$. Using the same notation as in Section~\ref{section4},
consider the open and unbounded set $\Gamma^{\mathcal{I}}$.
Moreover, consider an arbitrary Carath\'{e}odory function $h\colon I\times{\mathbb{R}}^{+}\to{\mathbb{R}}$ such that
\begin{equation}\label{cond_h}
\begin{aligned}
   &  h(x,s) \geq a(x)g(s), \quad \text{a.e. } x\in \bigcup_{i\in {\mathcal{I}}} I_{i}, \; \forall \, s\geq 0;
\\ &  h(x,s) = a(x)g(s),    \quad \text{a.e. } x\in I \setminus \bigcup_{i\in {\mathcal{I}}} I_{i}, \; \forall \, s\geq 0;
\end{aligned}
\end{equation}
and, as usual, define the completely continuous operator $\Psi_{h}\colon\mathcal{C}(I)\to\mathcal{C}(I)$,
\begin{equation*}
(\Psi_{h} u)(x):=\int_{I}G(x,\xi)\tilde{h}(\xi,u(\xi))~\!d\xi,
\end{equation*}
where
\begin{equation*}
\tilde{h}(x,s):=
\begin{cases}
\, h(x,s), & \text{if } s\geq0; \\
\, h(x,0), & \text{if } s\leq0.
\end{cases}
\end{equation*}
We know that every fixed point of $\Psi_{h}$ is a non-negative solution of
\begin{equation}\label{two-pointBVPh}
\begin{cases}
\, u''+h(x,u)=0 \\
\, u(0)=u(L)=0.
\end{cases}
\end{equation}

To prove the claim, we use Theorem~\ref{Th-Multiplicity}. In particular we have to show that
the triplet $(I-\Psi_{h},\Gamma^{\mathcal{I}},0)$ is admissible
for each Carath\'{e}odory function $h$ satisfying \eqref{cond_h}
and for each $\emptyset\neq\mathcal{I}\subseteq\{1,\ldots,n\}$.

By the choice of $R \geq R^{*}$ and by the convexity of the solution of \eqref{two-pointBVPh} on
each interval contained in $I \setminus \bigcup_{i=1}^{n} I_{i}$,
we know that every fixed point $u$ of $\Psi_{h}$ is contained in the open ball $B(0,R)$,
with $R > 0$ independent of the particular choice of $h(x,s)$ (see Lemma~\ref{lemmaRJ}).
Consequently it is sufficient to prove that $\Psi_{h}$ has no fixed points in $\partial(\Gamma^{\mathcal{I}}\cap B(0,R))$,
for $\mu$ sufficiently large.

First we note that if $\mathcal{I}=\{1,\ldots,n\}$, there is nothing to prove,
since all fixed points in $\Gamma^{\mathcal{I}}=\mathcal{C}(I)$ are contained in $B(0,R)$.
Then fix $\emptyset\neq\mathcal{I}\subsetneq\{1,\ldots,n\}$.
By contradiction, suppose that there is a fixed point $u$ of $\Psi_{h}$ in \mbox{$\partial(\Gamma^{\mathcal{I}}\cap B(0,R))$}.
Due to what we have just remarked, this is equivalent to assuming
the existence of a solution $u$ of \eqref{two-pointBVPh} with
\begin{equation*}
\max_{x\in I_{k}} u(x) = r, \quad \text{ for some index } \, k\in\{1,\ldots,n\}\setminus\mathcal{I},
\end{equation*}
and such that $\max_{x\in I} u(x) < R$. Clearly $u\not\equiv0$
and, moreover, by the concavity of $u(x)$ in $I_{k}$, we also have
\begin{equation}\label{eq-conc}
u(x) \geq \dfrac{r}{\tau_{k}-\sigma_{k}} \min\{x-\sigma_{k},\tau_{k}-x\}, \quad \forall \, x\in I_{k}.
\end{equation}

In order to prove that our assumption is contradictory and hence that the topological degree is well defined,
we split our argument into three steps.

\medskip

\noindent
\textit{Step 1: A priori bounds for $|u'(x)|$ on $I_{k}$.} This part of the proof follows by adapting a similar estimate obtained in
Theorem~\ref{Th+LinCond}. Notice that $h(x,u(x)) = a(x) g(u(x)) = a^{+}(x) g(u(x))$, for a.e.~$x\in I_{k}$. Hence
\begin{equation*}
|u''(x)| \leq \gamma_r(x):= a^{+}(x)\max_{0\leq s\leq r}g(s), \quad \text{a.e. } x\in I_{k},
\end{equation*}
and, therefore
\begin{equation*}
|u'(y_{1}) - u'(y_{2})| \leq \|\gamma_r\|_{L^{1}(I_{k})}, \quad \forall \, y_{1}, y_{2} \in I_{k}.
\end{equation*}
Let $\hat{x} \in I_{k} = \mathopen{[}\sigma_{k},\tau_{k}\mathclose{]}$ be such that
$|u'(\hat{x})| \leq r/(\tau_{k} - \sigma_{k})$ (otherwise we have an easy contradiction like in the proof of Theorem~\ref{Th+LinCond}).
Hence for all $x\in I_{k}$
\begin{equation}\label{eq-MK}
|u'(x)| \leq |u'(\hat{x})| + |u'(x) - u'(\hat{x})| \leq \dfrac{r}{\tau_{k} - \sigma_{k}} + \|\gamma_r\|_{L^{1}(I_{k})} =: M_{k}.
\end{equation}

\medskip

\noindent
\textit{Step 2: Lower bounds for $u(x)$ on the boundary of $I_{k}$.}
Let $\varphi_{k}$, with $\|\varphi_{k}\|_{\infty}=1$, be the positive eigenfunction on $I_{k}=\mathopen[\sigma_{k},\tau_{k}\mathclose]$ of
\begin{equation*}
\begin{cases}
\, \varphi''+\lambda_{1}^{k}a^{+}(x)\varphi=0 \\
\, \varphi|_{\partial_{I_{k}}}=0,
\end{cases}
\end{equation*}
where $\lambda_{1}^{k}$ is the first eigenvalue.
Then $\varphi_{k}(x)\geq0$, for all $x\in I_{k}$, $\varphi_{k}(x)>0$, for all $x\in\mathopen]\sigma_{k},\tau_{k}\mathclose[$,
and $\varphi_{k}'(\sigma_{k})>0>\varphi_{k}'(\tau_{k})$ (hence $\|\varphi_{k}'\|_{\infty}>0$).

By \eqref{eq-delta} and $\lambda_{0} \leq \lambda_{1}^{k}$, we know that
\begin{equation*}
g(s)<(\lambda_{1}^{k}-\delta) s, \quad \forall \, 0<s\leq r.
\end{equation*}
Then, by \eqref{eq-conc}, we have
\begin{equation*}
\begin{aligned}
   & \|\varphi_{k}'\|_{\infty}(u(\sigma_{k})+u(\tau_{k})) \geq
\\ & \geq u(\sigma_{k})|\varphi_{k}'(\sigma_{k})|+u(\tau_{k})|\varphi_{k}'(\tau_{k})|
= u(\sigma_{k})\varphi_{k}'(\sigma_{k})-u(\tau_{k})\varphi_{k}'(\tau_{k})
\\ & = \Bigl{[}u'(x)\varphi_{k}(x)-u(x)\varphi_{k}'(x)\Bigr{]}_{x=\sigma_{k}}^{x=\tau_{k}}
\\ & = \int_{\sigma_{k}}^{\tau_{k}}\dfrac{d}{dx}\Bigl{[}u'(x)\varphi_{k}(x)-u(x)\varphi_{k}'(x)\Bigr{]} ~\!dx
\\ & = \int_{I_{k}}\Bigl{[}u''(x)\varphi_{k}(x)-u(x)\varphi_{k}''(x)\Bigr{]} ~\!dx
\\ & = \int_{I_{k}}\Bigl[-h(x,u(x))\varphi_{k}(x)+u(x)\lambda_{1}^{k}a^{+}(x)\varphi_{k}(x)\Bigr] ~\!dx
\\ & = \int_{I_{k}}\Bigl[\lambda_{1}^{k}u(x)-g(u(x))\Bigr]a^{+}(x)\varphi_{k}(x) ~\!dx
\\ & > \int_{I_{k}}\delta \, \biggl(\dfrac{r}{\tau_{k}-\sigma_{k}} \min\{x-\sigma_{k},\tau_{k}-x\}\biggr)a^{+}(x)\varphi_{k}(x) ~\!dx
\\ & = r\, \biggl[\dfrac{\delta}{\tau_{k}-\sigma_{k}}\int_{I_{k}}\min\{x-\sigma_{k},\tau_{k}-x\}a^{+}(x)\varphi_{k}(x) ~\!dx\biggr].
\end{aligned}
\end{equation*}

Hence, from the above inequality, we conclude that there exists a constant $c_{k}>0$, depending on $\delta$,
$I_{k}$ and $a^{+}(x)$, but independent of $u(x)$ and $r$, such that
\begin{equation*}
u(\sigma_{k}) + u(\tau_{k}) \geq {c}_{k}r > 0.
\end{equation*}
As a consequence of the above inequality, we have that at least one of the two inequalities
\begin{equation}\label{eq-ineq}
0 < {c}_{k}r/2 \leq u(\tau_{k}) \leq r, \qquad  0 < {c}_{k}r/2 \leq u(\sigma_{k}) \leq r,
\end{equation}
holds.

\medskip

\noindent
\textit{Step 3: Contradiction on an adjacent interval for $\mu$ large.}
Just to fix a case for the rest of the proof, suppose that the first inequality in \eqref{eq-ineq} is true.
In such a situation, we necessarily have $\tau_{k} < L$ (as $u(L) = 0$).
Now we focus our attention on the right-adjacent interval $\mathopen{[}\tau_{k},\sigma_{k+1}\mathclose{]}$, where $a(x)\leq 0$.
Recall also that, by the convention
we have adopted in defining the intervals $I_{i}$, we have that $a(x)$ is not identically zero on all right neighborhoods of $\tau_{k}$.

Since $g(s) > 0$ for all $s > 0$, we can introduce the positive constant
\begin{equation*}
\nu_{k}:=\min_{\frac{{c}_{k}r}{4}\leq s\leq R}g(s)>0
\end{equation*}
and define
\begin{equation*}
\delta^{+}_{k}:= \min\biggl\{\sigma_{k+1}-\tau_{k},\dfrac{{c}_{k}r}{4M_{k}}\biggr\}>0,
\end{equation*}
where $M_{k}>0$ is the bound for $|u'|$ obtained in \eqref{eq-MK} of \textit{Step 1}.
Then, by the convexity of $u(x)$ on $\mathopen{[}\tau_{k},\sigma_{k+1}\mathclose{]}$, we have that $u(x)$
is bounded from below by the tangent line at $(\tau_{k},u(\tau_{k}))$, with slope $u'(\tau_{k}) \geq -M_{k}$. Therefore,
\begin{equation*}
\dfrac{{c}_{k}r}{4}\leq u(x)\leq R, \quad \forall \, x\in\mathopen[\tau_{k},\tau_{k}+\delta^{+}_{k} \mathclose].
\end{equation*}

We prove that for $\mu>0$ sufficiently large $\max_{x\in\mathopen[\tau_{k},\sigma_{k+1}\mathclose]}u(x)>R$
(which is a contradiction to the upper bound for $u(x)$).

Consider the interval $\mathopen[\tau_{k},\tau_{k}+\delta^{+}_{k}\mathclose]\subseteq\mathopen[\tau_{k},\sigma_{k+1}\mathclose]$.
For all $x\in \mathopen[\tau_{k},\tau_{k}+\delta^{+}_{k}\mathclose]$ we have
\begin{equation*}
u'(x) = u'(\tau_{k})+\int_{\tau_{k}}^{x}\mu a^{-}(\xi)g(u(\xi))~\!d\xi
      \geq -M_{k}+\mu\nu_{k}\int_{\tau_{k}}^{x} a^{-}(\xi)~\!d\xi,
\end{equation*}
then
\begin{equation*}
u(x) =     u(\tau_{k})+\int_{\tau_{k}}^{x}u'(\xi)~\!d\xi
     \geq \dfrac{{c}_{k}r}{2}-M_{k}(x-\tau_{k})+\mu\nu_{k}\int_{\tau_{k}}^{x}\biggr(\int_{\tau_{k}}^{s} a^{-}(\xi)~\!d\xi\biggr) ~\!ds.
\end{equation*}
Hence, for $x=\tau_{k}+\delta^{+}_{k} $,
\begin{equation*}
R \geq  u(\tau_{k}+\delta^{+}_{k} ) \geq
\dfrac{{c}_{k}r}{2}-M_{k}\delta^{+}_{k}
+ \mu\nu_{k}\int_{\tau_{k}}^{\tau_{k}+\delta^{+}_{k} }\biggl(\int_{\tau_{k}}^{s} a^{-}(\xi)~\!d\xi \biggr) ~\!ds.
\end{equation*}
This gives a contradiction if $\mu$ is sufficiently large, say
\begin{equation*}
\mu > \mu_{k}^{+} := \dfrac{R + M_{k} L}{\nu_{k}{A}^{+}_{k}},
\end{equation*}
where we have set
\begin{equation*}
{A}^{+}_{k}:= \int_{\tau_{k}}^{\tau_{k}+\delta^{+}_{k} }\biggl(\int_{\tau_{k}}^{s} a^{-}(\xi)~\!d\xi \biggr) ~\!ds>0,
\end{equation*}
recalling that $\int_{\tau_{k}}^x  a^{-}(\xi) ~\!d\xi > 0$ for each $x\in \mathopen]\tau_{k},\sigma_{k+1}\mathclose]$.

A similar argument (with obvious modifications) applies if the second inequality in \eqref{eq-ineq} is true
(in such a case, we must have $\sigma_{k} > 0$, as $u(0) = 0$). This time we focus our attention
on the left-adjacent interval $\mathopen{[}\tau_{k-1},\sigma_{k}\mathclose{]}$ where $a(x)\leq 0$. Recall also that, by the convention
we have adopted in defining the intervals $I_{i}$, we have that $a(x)$ is not identically zero on all left neighborhoods of $\sigma_{k}$.

\noindent
If we define
\begin{equation*}
\delta^{-}_{k}  := \min\biggl\{\sigma_{k}-\tau_{k-1},\dfrac{{c}_{k}r}{4M_{k}}\biggr\}>0,
\end{equation*}
we obtain the same contradiction for
\begin{equation*}
\mu > \mu_{k}^{-} := \dfrac{R + M_{k} L}{\nu_{k}{A}^{-}_{k}},
\end{equation*}
where we have set
\begin{equation*}
{A}^{-}_{k}:= \int_{\sigma_{k}-\delta^{-}_{k} }^{\sigma_{k}}\biggl(\int_{s}^{\sigma_{k}} a^{-}(\xi) ~\!d\xi \biggr) ~\!ds.
\end{equation*}

At the end, we define
\begin{equation*}
\mu^{*}:= \max_{k=1,\ldots,n} \mu^{\pm}_{k}
\end{equation*}
and we apply Theorem~\ref{Th-Multiplicity} with $\mu > \mu^{*}$.
The proof is complete.
\end{proof}

An immediate consequence of Theorem~\ref{th-5.3} is the following result which generalizes \cite[Theorem~2.1]{GaudenziHabetsZanolin4}.

\begin{corollary}\label{cor-5.3}
Let $g\colon {\mathbb{R}}^{+} \to {\mathbb{R}}^{+}$ be a continuous function such that
$g(0) = 0$ and $g(s) > 0$ for all $s > 0$.
Suppose also that
\begin{equation*}
\lim_{s\to 0^{+}}\dfrac{g(s)}{s} = 0 \quad \text{ and } \quad \lim_{s\to +\infty}\dfrac{g(s)}{s} = +\infty.
\end{equation*}
Let $a^{\pm}\colon\mathopen{[}0,L\mathclose{]}\to{\mathbb{R}}^{+}$ be continuous functions such that for some
$0 = \sigma_{1} < \tau_{1} < \sigma_{2} < \tau_{2} < \ldots <\sigma_{n} < \tau_{n} = L$ it holds
\begin{equation*}
\begin{aligned}
& a^{+}(x)\not\equiv 0, \; a^{-}(x) \equiv 0, \text{ on } \; \mathopen{[}\sigma_{i},\tau_{i}\mathclose{]}, \; i=1,\ldots, n;\\
& a^{-}(x)\not\equiv 0, \; a^{+}(x) \equiv 0, \text{ on } \; \mathopen{[}\tau_{i},\sigma_{i+1}\mathclose{]}, \; i=1,\ldots, n-1.
\end{aligned}
\end{equation*}
Then there exists $\mu^{*}>0$ such that, for each $\mu > \mu^{*}$,
problem \eqref{two-pointBVPmu} has at least $2^{n} -1$ positive solutions.
\end{corollary}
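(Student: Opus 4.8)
The plan is to deduce the corollary directly from Theorem~\ref{th-5.3}, the only work being to check that the weight $a(x) := a^{+}(x) - a^{-}(x)$ and the nonlinearity $g(s)$ satisfy all of its hypotheses. First I would observe that $a = a^{+} - a^{-}$ is continuous, since $a^{\pm}$ are, and read off the sign structure imposed by the assumptions: on each $I_{i} := \mathopen[\sigma_{i},\tau_{i}\mathclose]$ one has $a^{-} \equiv 0$, so that $a = a^{+} \geq 0$ with $a \not\equiv 0$ (because $a^{+} \not\equiv 0$ there); on each $\mathopen[\tau_{i},\sigma_{i+1}\mathclose]$ one has $a^{+} \equiv 0$, so that $a = -a^{-} \leq 0$ with $a \not\equiv 0$. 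This is precisely the sign pattern required by $(H')$, and it simultaneously yields \eqref{eq-a+} and \eqref{eq-a-}.

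Next I would ensure that the given points $\sigma_{i},\tau_{i}$ can be taken to meet the endpoint normalization preceding Theorem~\ref{th-5.3}, namely that $a \not\equiv 0$ on every left neighbourhood of $\sigma_{i}$ and every right neighbourhood of $\tau_{i}$. If $a^{-}$ happens to vanish on a one-sided neighbourhood of some $\tau_{i}$, then $a \equiv 0$ there (since $a^{+} \equiv 0$ on $\mathopen[\tau_{i},\sigma_{i+1}\mathclose]$ too), and I would simply enlarge the adjacent interval of non-negativity by moving $\tau_{i}$ rightward — into the region where $a \equiv 0$ — until $a^{-}\not\equiv 0$ on every right neighbourhood of the new endpoint; the symmetric adjustment handles the left neighbourhoods of $\sigma_{i}$. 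Such a shift keeps $a \geq 0$ on the enlarged $I_{i}$ (the added part carries $a \equiv 0$), preserves $a\not\equiv 0$ on $I_{i}$ and the number $n$ of positive humps, and guarantees $\int_{\tau_{k}}^{x} a^{-} > 0$ for $x$ just to the right of $\tau_{k}$, exactly as used in \emph{Step 3} of the proof of Theorem~\ref{th-5.3}. For $n=1$ one has $a^{-}\equiv 0$, so \eqref{eq-a-} fails and Theorem~\ref{th-5.3} does not apply; but then $2^{1}-1=1$ and the single positive solution is provided instead by Theorem~\ref{th-5.1}.

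Finally I would translate the limit conditions on $g$. From $\lim_{s\to 0^{+}} g(s)/s = 0$ we get $g_{0} = 0 < \lambda_{0}$, and from $\lim_{s\to +\infty} g(s)/s = +\infty$ we get $g_{\infty} = +\infty$, which trivially exceeds the finite number $\max_{i=1,\ldots,n}\lambda_{1}^{i}$ (each $\lambda_{1}^{i}$ being the first eigenvalue of $\varphi'' + \lambda a(x)\varphi = 0$ on $I_{i}$, finite because $a = a^{+}\not\equiv 0$ on $I_{i}$). Since \eqref{eq-g} holds by hypothesis, all assumptions of Theorem~\ref{th-5.3} are verified, and I would conclude the existence of $\mu^{*} > 0$ such that \eqref{two-pointBVPmu} has at least $2^{n}-1$ positive solutions for every $\mu > \mu^{*}$. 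I expect the only genuinely delicate point to be the endpoint normalization just described; the remaining verifications are immediate substitutions into Theorem~\ref{th-5.3}.
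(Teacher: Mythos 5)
Your proposal is correct and follows essentially the same route as the paper, which presents Corollary~\ref{cor-5.3} as an immediate consequence of Theorem~\ref{th-5.3} after noting that the hypotheses on $a^{\pm}$ give exactly condition $(H')$ (with $\tau_{0}=\sigma_{1}=0$, $\sigma_{n+1}=\tau_{n}=L$) and that $g_{0}=0<\lambda_{0}$, $g_{\infty}=+\infty>\max_{i}\lambda_{1}^{i}$. Your two additional verifications --- adjusting the endpoints to meet the normalization convention preceding Theorem~\ref{th-5.3}, and routing the $n=1$ case (where \eqref{eq-a-} fails) through Theorem~\ref{th-5.1} --- are sound and in fact make explicit details that the paper leaves implicit in its ``immediate consequence'' claim.
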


Clearly, under the assumptions of Corollary~\ref{cor-5.3}, condition $(H')$ holds with $0 = \tau_{0} = \sigma_{1}$
and $\tau_{n} = \sigma_{n+1} = L$. Variants of the result can be easily stated if $0 < \sigma_{1}$ or $\tau_{n} < L$.
In any case, the number of positive solutions is at least $2^{n}-1$, where $n$ is the number of positive humps.

\medskip

A simple consequence of Theorem~\ref{th-5.3}  can be obtained when $g_{0} = 0$ and $g_{\infty} > 0$
(not necessarily $g_{\infty} = + \infty$ as in Corollary~\ref{cor-5.3})
for the problem
\begin{equation*}
\begin{cases}
\, u''+ (\nu a^{+}(x) - \mu a^{-}(x))g(u)=0 \\
\, u(0)=u(L)=0.
\end{cases}
\end{equation*}
Indeed, in such a case, we have that there exists $\nu^{*} > 0$ such that for each
$\nu > \nu^{*}$ there exists $\mu^{*} = \mu^{*}(\nu) > 0$ so that
there are at least $2^{n}-1$ positive solutions for each $\mu > \mu^{*}$.

\subsection{Radially symmetric solutions}\label{subsec-5.3}

Let $\|\cdot\|$ be the Euclidean norm in ${\mathbb{R}}^{N}$ (for $N \geq 2$) and let
\begin{equation*}
\Omega:= B(0,R_{2})\setminus B[0,R_{1}] = \{x\in {\mathbb{R}}^{N} \colon R_{1} < \|x\| < R_{2}\}
\end{equation*}
be an open annular domain, with $0 < R_{1} < R_{2}$.
Let ${\mathcal{A}}_{\mu}\colon \mathopen{[}R_{1},R_{2}\mathclose{]}\to {\mathbb{R}}$ be a continuous function defined as
\begin{equation*}
{\mathcal{A}}_{\mu}(r):= {\mathcal{A}}^{+}(r) - \mu {\mathcal{A}}^{-}(r), \quad \mu > 0.
\end{equation*}

We consider the problem of existence of positive solutions for the Dirichlet boundary value problem
\begin{equation}\label{eq-dir}
\begin{cases}
\, -\Delta \,{\mathcal{U}} = {\mathcal{A}}_{\mu}(\|x\|)\,g({\mathcal{U}}) & \text{ in } \Omega\\
\, {\mathcal{U}} = 0                                                      & \text{ on } \partial\Omega,
\end{cases}
\end{equation}
namely classical solutions such that ${\mathcal{U}}(x) > 0$ for all $x\in \Omega$.
If we look for radially symmetric solutions of \eqref{eq-dir}, we are led to the study of the two-point boundary value problem
\begin{equation}\label{eq-rad}
v''(r) + \dfrac{N-1}{r}v'(r) + {\mathcal{A}}_{\mu}(r) g(v(r)) = 0, \quad v(R_{1}) = v(R_{2}) = 0.
\end{equation}
Indeed, if $v(r)$ is any solution of \eqref{eq-rad}, then ${\mathcal{U}}(x):= v(\|x\|)$ is a solution of \eqref{eq-dir}.
Using the standard change of variable
\begin{equation*}
t = h(r):= \int_{R_{1}}^r \xi^{1-N} ~\!d\xi,
\end{equation*}
it is possible to
transform \eqref{eq-rad} into the equivalent problem
\begin{equation}\label{eq-rad1}
u''(t) +  r(t)^{2(N-1)}{\mathcal{A}}_{\mu}(r(t)) g(u(t)) = 0, \quad u(0) = u(L) = 0,
\end{equation}
for
\begin{equation*}
u(t)=v(r(t)),
\end{equation*}
with the positions
\begin{equation*}
L:= \int_{R_{1}}^{R_{2}} \xi^{1-N} ~\!d\xi \quad \mbox{ and } \quad r(t):= h^{-1}(t).
\end{equation*}
Clearly, problem \eqref{eq-rad1} is of the same form of \eqref{two-pointBVPmu} with
\begin{equation*}
a_{\mu}(t):= r(t)^{2(N-1)} {\mathcal{A}}_{\mu}(r(t)).
\end{equation*}
Then, the following results hold, for every continuous function
$g \colon {\mathbb{R}}^{+} \to {\mathbb{R}}^{+}$ such that
$g(0) = 0$ and $g(s) > 0$ for all $s > 0$ and satisfying
\begin{equation*}
\lim_{s\to 0^{+}}\dfrac{g(s)}{s} = 0 \quad \mbox{ and } \quad \lim_{s\to +\infty}\dfrac{g(s)}{s} = +\infty.
\end{equation*}

\begin{theorem}\label{th-radial1}
Suppose that ${\mathcal{A}}_{\mu}(r)$ changes its sign in $\mathopen{[}R_{1},R_{2}\mathclose{]}$ at most a finite number of times
and ${\mathcal{A}}^{+}(r)\not\equiv 0$.
Then, for every $\mu \geq 0$,
problem \eqref{eq-dir} has at least a positive solution.
\end{theorem}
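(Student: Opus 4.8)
The plan is to reduce the Dirichlet problem \eqref{eq-dir} to the ODE boundary value problem already analysed in this section and then to invoke the existence result Theorem~\ref{th-5.1}. First I would recall that, through the change of variable $t = h(r)$ and the position $u(t) = v(r(t))$, a radially symmetric solution $\mathcal{U}(x) = v(\|x\|)$ of \eqref{eq-dir} corresponds exactly to a solution of \eqref{eq-rad1}, which is problem \eqref{two-pointBVPmu} with transformed weight $a_{\mu}(t) := r(t)^{2(N-1)}\mathcal{A}_{\mu}(r(t))$. Since $R_{1} > 0$, the function $r(t) = h^{-1}(t)$ stays in $\mathopen[R_{1},R_{2}\mathclose]$ and the factor $r(t)^{2(N-1)}$ is strictly positive; hence $a_{\mu}(t)$ and $\mathcal{A}_{\mu}(r(t))$ have the same sign at every point, and because $t \mapsto r(t)$ is an increasing homeomorphism of $\mathopen[0,L\mathclose]$ onto $\mathopen[R_{1},R_{2}\mathclose]$, the sign pattern of $\mathcal{A}_{\mu}$ is transported unchanged to $a_{\mu}$.

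Next I would verify the structural hypothesis $(H')$. For $\mu > 0$ one has $\mathcal{A}_{\mu} = \mathcal{A}^{+} - \mu \mathcal{A}^{-}$, so $a_{\mu}(t)$ is positive, negative, or zero exactly where $\mathcal{A}$ is; since $\mathcal{A}_{\mu}$ changes sign only finitely many times, the interval $\mathopen[0,L\mathclose]$ splits into finitely many maximal intervals of non-negativity and of non-positivity. Following the convention of Subsection~\ref{subsec-5.2} (merging any interval where $a_{\mu} \equiv 0$ into an adjacent non-negativity interval), I would select intervals $I_{1},\ldots,I_{n}$ so that $(H')$ holds and $a_{\mu} \not\equiv 0$ on each $I_{i}$; the latter is guaranteed by $\mathcal{A}^{+} \not\equiv 0$ together with the strict positivity of $r(t)^{2(N-1)}$. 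In the degenerate case $\mu = 0$, the weight $a_{0}(t) = r(t)^{2(N-1)}\mathcal{A}^{+}(r(t))$ is non-negative and not identically zero, and one may simply take the single interval $I_{1} = I = \mathopen[0,L\mathclose]$, which satisfies $(H')$ trivially (cf.~Remark~\ref{rem-5.1}).

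It then remains to check the growth conditions on $g$ required by Theorem~\ref{th-5.1}. The hypothesis $\lim_{s\to 0^{+}} g(s)/s = 0$ gives $g_{0} = 0 < \lambda_{0}$, where $\lambda_{0} > 0$ is the first eigenvalue associated with $a_{\mu}^{+}$ (positive because $\mathcal{A}^{+} \not\equiv 0$); and the hypothesis $\lim_{s\to +\infty} g(s)/s = +\infty$ gives $g_{\infty} = +\infty$, which trivially exceeds the finite quantity $\max_{i=1,\ldots,n} \lambda_{1}^{i}$. Consequently all assumptions of Theorem~\ref{th-5.1} are satisfied, and that theorem yields a positive solution $u$ of \eqref{eq-rad1}. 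Undoing the change of variable, $v(r) = u(h(r))$ solves \eqref{eq-rad}, and $\mathcal{U}(x) = v(\|x\|)$ is the sought positive radial solution of \eqref{eq-dir}, for every $\mu \geq 0$.

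Since the analytic substance has already been absorbed into Theorem~\ref{th-5.1}, the only points demanding care are essentially bookkeeping: confirming that the weight factor $r(t)^{2(N-1)}$ neither destroys the sign structure nor the nontriviality on the $I_{i}$, correctly grouping the finitely many sign intervals into a configuration compatible with $(H')$, and treating $\mu = 0$ — where $\mathcal{A}_{\mu}$ no longer changes sign — as the separate non-negative-weight case. I expect this last bookkeeping, rather than any estimate, to be the main thing to get right.
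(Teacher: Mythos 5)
Your proposal is correct and follows exactly the route the paper intends: the change of variable $t=h(r)$ turns \eqref{eq-dir} into a problem of the form \eqref{two-pointBVPmu} with weight $a_{\mu}(t)=r(t)^{2(N-1)}\mathcal{A}_{\mu}(r(t))$, whose sign structure and nontriviality are preserved because $r(t)\geq R_{1}>0$, so Theorem~\ref{th-5.1} (trivially applicable in the non-negative-weight case $\mu=0$, cf.~Remark~\ref{rem-5.1}) yields the positive solution, with $g_{0}=0<\lambda_{0}$ and $g_{\infty}=+\infty>\max_{i}\lambda_{1}^{i}$. The bookkeeping you flag (finite sign pattern giving $(H')$, merging zero intervals, the separate treatment of $\mu=0$) is precisely what the paper leaves implicit, and you have handled it correctly.
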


\begin{theorem}\label{th-radial2}
Suppose that for some
$R_{1} = \sigma_{1} < \tau_{1} < \sigma_{2} < \tau_{2} < \ldots <\sigma_{n} < \tau_{n} = R_{2}$ it holds:
\begin{equation*}
\begin{aligned}
& {\mathcal{A}}^{+}(x)\not\equiv 0, \; {\mathcal{A}}^{-}(x) \equiv 0,   \text{ on } \; \mathopen{[}\sigma_{i},\tau_{i}\mathclose{]}, \; i=1,\ldots, n;\\
& {\mathcal{A}}^{-}(x)\not\equiv 0, \; {\mathcal{A}}^{+}(x) \equiv 0,   \text{ on } \; \mathopen{[}\tau_{i},\sigma_{i+1}\mathclose{]}, \; i=1,\ldots, n-1.\
\end{aligned}
\end{equation*}
Then there exists $\mu^{*}>0$ such that, for each $\mu > \mu^{*}$,
problem \eqref{eq-dir} has at least $2^{n} -1$ positive solutions.
\end{theorem}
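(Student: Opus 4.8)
The plan is to reduce the Dirichlet problem \eqref{eq-dir} to the one-dimensional problem \eqref{eq-rad1} by radial symmetry and then to invoke the multiplicity result Theorem~\ref{th-5.3}. As recalled in the discussion preceding the statement, through the change of variable $t = h(r) = \int_{R_{1}}^{r} \xi^{1-N}\,d\xi$ and the position $u(t) = v(r(t))$ with $r = h^{-1}$, every radial solution $\mathcal{U}(x) = v(\|x\|)$ of \eqref{eq-dir} corresponds to a solution of the two-point boundary value problem \eqref{eq-rad1}, which is exactly of the form \eqref{two-pointBVPmu} with the transformed weight
\[
a_{\mu}(t) := r(t)^{2(N-1)} {\mathcal{A}}_{\mu}(r(t)).
\]
Since $h$ is a strictly increasing $C^{1}$-diffeomorphism of $\mathopen[R_{1},R_{2}\mathclose]$ onto $\mathopen[0,L\mathclose]$ and the annular geometry keeps $r$ bounded away from $0$, this correspondence is a bijection between positive radial solutions of \eqref{eq-dir} and positive solutions of \eqref{eq-rad1}; distinct solutions $u$ yield distinct functions $\mathcal{U}$, so the count $2^{n}-1$ is preserved.

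First I would verify that $a_{\mu}(t)$ meets all the hypotheses of Theorem~\ref{th-5.3}. Writing $\tilde{a}^{\pm}(t) := r(t)^{2(N-1)} {\mathcal{A}}^{\pm}(r(t))$, one has $a_{\mu}(t) = \tilde{a}^{+}(t) - \mu\tilde{a}^{-}(t)$, and because the factor $r(t)^{2(N-1)}$ is continuous and strictly positive, the sign of $a_{\mu}$ agrees pointwise with that of ${\mathcal{A}}_{\mu}\circ r$; moreover the disjointness of the supports of ${\mathcal{A}}^{+}$ and ${\mathcal{A}}^{-}$ makes $\tilde{a}^{+}$ and $\tilde{a}^{-}$ the positive and negative parts of the base weight. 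Setting $s_{i} := h(\sigma_{i})$ and $t_{i} := h(\tau_{i})$, the assumed ordering and sign structure transfer verbatim: $0 = s_{1} < t_{1} < \ldots < s_{n} < t_{n} = L$, with $a_{\mu} \geq 0$, $a_{\mu}\not\equiv 0$ on each $\mathopen[s_{i},t_{i}\mathclose]$ and $a_{\mu} \leq 0$, $a_{\mu}\not\equiv 0$ on each $\mathopen[t_{i},s_{i+1}\mathclose]$ for $\mu > 0$. Hence $(H')$ holds for $a_{\mu}$ with the required $n$ positive humps separated by $n-1$ negative ones, and both $\tilde{a}^{+}\not\equiv 0$ and $\tilde{a}^{-}\not\equiv 0$, matching \eqref{eq-a+} and \eqref{eq-a-}.

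Next I would check the conditions on $g$: the hypotheses $\lim_{s\to 0^{+}} g(s)/s = 0$ and $\lim_{s\to +\infty} g(s)/s = +\infty$ give $g_{0} = 0 < \lambda_{0}$ and $g_{\infty} = +\infty > \max_{i} \lambda_{1}^{i}$, where $\lambda_{0}$ and $\lambda_{1}^{i}$ are the first eigenvalues associated with $\tilde{a}^{+}$ on $\mathopen[0,L\mathclose]$ and on $\mathopen[s_{i},t_{i}\mathclose]$. With all hypotheses verified, Theorem~\ref{th-5.3} furnishes $\mu^{*} > 0$ such that for every $\mu > \mu^{*}$ problem \eqref{eq-rad1} admits at least $2^{n}-1$ positive solutions; transporting these back through $\mathcal{U}(x) = u(h(\|x\|))$, with the maximum principle (Lemma~\ref{Maximum principle}) guaranteeing $\mathcal{U} > 0$ in $\Omega$, yields the assertion. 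The argument is essentially a transcription, so there is no deep obstacle; the only points demanding care are the regularity and genuine positivity of $\mathcal{U}$ up to the inner and outer radii (handled by the annular geometry, which avoids the coordinate singularity at the origin) and the bookkeeping ensuring that the eigenvalues controlling $(f_{0}^{+})$ and $(f_{\infty})$ are computed for the transformed weight rather than the original ${\mathcal{A}}_{\mu}$.
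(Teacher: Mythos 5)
Your proposal is correct and follows essentially the same route as the paper: the paper also reduces \eqref{eq-dir} to \eqref{eq-rad1} via the change of variable $t = h(r)$ and then invokes the multiplicity result of Section~\ref{subsec-5.2} (Theorem~\ref{th-5.3}, in the form of Corollary~\ref{cor-5.3}) for the transformed weight $a_{\mu}(t) = r(t)^{2(N-1)}{\mathcal{A}}_{\mu}(r(t))$. Your explicit verification that the positive factor $r(t)^{2(N-1)}$ preserves the sign structure, that $(H')$ transfers to the points $h(\sigma_{i}), h(\tau_{i})$, and that the solution count is preserved under the bijection $u \mapsto {\mathcal{U}}$ simply fills in details the paper leaves implicit.
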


Theorem~\ref{th-radial1} and Theorem~\ref{th-radial2} can be seen as an extension of the classical existence result of Bandle, Coffman and Marcus
\cite{BandleCoffmanMarcus}
to the case of a general sign-changing weight. It could be interesting to investigate under which supplementary assumptions
the above results are sharp (that is, providing exactly one positive solution or exactly $2^{n}-1$ positive solutions, respectively).

As a comment about the sign conditions on ${\mathcal{A}}_{\mu}(r)$, we observe that our results
apply to weight functions which may vanish in some sub-intervals of $\mathopen{[}R_{1},R_{2}\mathclose{]}$ (even in infinitely many sub-intervals),
see Remark~\ref{rem-4.1}.
Concerning the continuous nonlinearity $g(s)$, we notice that, besides the positivity and the conditions for $s\to 0^{+}$ and for $s\to +\infty$,
no other assumptions (like smoothness, monotonicity or homogeneity) are required.

\subsection{Final remarks}\label{subsec-5.4}

For the study of problem \eqref{two-pointBVPag} we have confined ourselves to the case of a continuous weight function $a(x)$.
Since the general results for problem \eqref{two-pointBVP} have been obtained under general Carath\'{e}odory assumptions on $f(x,s)$,
we can deal with the case of $a \in L^{1}(I)$, too.
With this respect, Theorem~\ref{th-5.1} and Theorem~\ref{th-5.2} are still valid provided that the assumption $a(x)\not\equiv0$ on $I_{i}$
is meant in the sense that $a(x)\geq 0$ for a.e.~$x\in I_{i}$ and $\int_{I_{i}} a(x) ~\!dx > 0$. Concerning the variant of Theorem~\ref{th-5.3}
for $a_{\mu}(x) = a^{+}(x) - \mu a^{-}(x)$, with $a^{\pm} \in L^{1}(I)$ and $a^{\pm}\geq 0$ almost everywhere, we claim that our result still holds
provided that the endpoints of the intervals are selected so that $\int_{\tau_{k}}^x a^{-}(\xi) ~\!d\xi > 0$ for all
$x\in \mathopen]\tau_{k},\sigma_{k+1}\mathclose]$ and $\int_x^{\sigma_{k}} a^{-}(\xi) ~\!d\xi > 0$ for all
$x\in \mathopen[\tau_{k-1},\sigma_{k}\mathclose[$ (for each $k=1,\ldots,n$). In this manner, the constants ${A}^{\pm}_{k}$
in \textit{Step 3} of the proof of Theorem~\ref{th-5.3} are all strictly positive. All the other parts of the proof are exactly the same.

In \cite{GaudenziHabetsZanolin3} a class of measurable weight functions which are possibly singular at the endpoints of the interval $I$
is considered. More precisely, therein one can consider a function $a\in L^{1}_{\text{loc}}(I)$ such that $\int_{I}x(L-x)|a(x)| ~\!dx < +\infty$.
The possibility of dealing with weight functions which are not in $L^{1}(I)$ depends by the method of proof in \cite{GaudenziHabetsZanolin3}
based on the search of fixed points for the operator associated with the Green function. Since in this paper we follow exactly the
same approach, we can also deal with such a wider class of weight functions.

The approach that we have followed in the present work can be adapted to the study of different boundary value problems.
For instance, like in \cite{LanWebb}, one can consider mixed boundary conditions like
$u'(0) = u(L) = 0$ or $u(0) = u'(L) = 0$.

\appendix
\section{Appendix}\label{section6}

In this appendix we present a general version of the Leray-Schauder degree. For more details we refer to \cite{Mawhin,Nussbaum2,Nussbaum3}
and the references therein.

Let $X$ be a normed linear space, $\Omega\subseteq X$ an open subset and $z\in X$.
Consider a continuous map $\phi\colon \Omega\to X$ such that $S_{z}:=\left\{x\in\Omega\colon x-\phi(x)=z\right\}$ is a compact set (possibly empty)
and such that there exists an open neighborhood $V$ of $S_{z}$ with $\overline{V}\subseteq\Omega$ such that $\phi|_{\overline{V}}$ is compact.
If all the previous assumptions are satisfied, the triplet $(Id-\phi,\Omega,z)$ is called \textit{admissible}.

\noindent
To the admissible triplet $(Id-\phi,\Omega,z)$ we associate the integer
\begin{equation*}
deg(Id-\phi,\Omega,z),
\end{equation*}
called the \textit{Leray-Schauder degree of $Id-\phi$ on $\Omega$ in $z$}, satisfying the following three axioms.
\begin{enumerate}
    \item [(LS1)] \textit{Additivity}.
            If $\Omega_{1},\Omega_{2}\subseteq \Omega$ are open and disjoint subsets and $S_{z}\subseteq \Omega_{1}\cup\Omega_{2}$, then
            \begin{equation*}
            deg(Id-\phi,\Omega,z)=deg(Id-\phi,\Omega_{1},z)+deg(Id-\phi,\Omega_{2},z).
            \end{equation*}
    \item [(LS2)] \textit{Homotopy invariance}.
            Let $U\subseteq X\times\mathopen{[}a,b\mathclose{]}$ be an open subset
            (typically $U=\Omega\times\mathopen{[}a,b\mathclose{]}$, with $\Omega\subseteq X$).
            Let $h\colon U\to X$ be a continuous map.
            Define $h_{\lambda}(x):=h(x,\lambda)$ and $U_{\lambda}:=\left\{x\in X\colon (x,\lambda)\in U\right\}$.
            Suppose that the set $\Sigma:=\left\{(x,\lambda)\in U\colon x-h_{\lambda}(x)=z\right\}$
            is compact (possibly empty). Assume that there exists an open neighborhood $W$ of $\Sigma$
            such that $h|_{\overline{W}}$ is a compact map.
            Then $deg(Id-h_{\lambda},U_{\lambda},z)$ is constant with respect to $\lambda\in\mathopen{[}a,b\mathclose{]}$.
    \item [(LS3)] \textit{Normalization}.
            \begin{equation*}
            deg(Id,\Omega,z):=\begin{cases}
            \, 1, & \text{if } z\in \Omega;\\
            \, 0, & \text{if } z\in X\setminus\overline{\Omega}.
            \end{cases}
            \end{equation*}
\end{enumerate}
Using the axioms, one can easily prove that if $deg(Id-\phi,\Omega,z)\neq0$,
then there exists $\hat{x}\in\Omega$ such that $\hat{x}-\phi(\hat{x})=z$.

In the framework of this paper we need a simpler version of the general topological degree described above.
Namely, in our applications we consider a completely continuous operator $\phi\colon X\to X$ and an open set $\Omega\subseteq X$.
Since we focus on the existence of fixed points of a map $\phi$, we take $z=0$ and we are interested in studying the integer
\begin{equation*}
deg(Id-\phi,\Omega,0).
\end{equation*}
To prove the admissibility of $(Id-\phi,\Omega,0)$ it is sufficient either to establish that
\begin{equation*}
S_{0} = \left\{x\in\Omega\colon x-\phi(x)=0\right\}
\end{equation*}
is compact
or, equivalently, to show that the set of all possible fixed points of $\phi$ in the whole space $X$ is contained in an open and bounded set $W$
satisfying $x-\phi(x)\neq 0$, for all $x\in\partial(\Omega\cap W)$.

\medskip

Now we state two theorems which are useful for our applications.

\begin{theorem}\label{deFigueiredo}
Let $X$ be a normed linear space and $\Omega\subseteq X$ an open and bounded subset. Let $\phi\colon\overline{\Omega}\to X$ be a compact map.
If $F\colon\overline{\Omega}\times\mathopen[0,+\infty\mathclose[\to X$ is a compact map such that
\begin{itemize}
\item [(i)] $F(x,0)=\phi(x)$, for all $x\in\partial\Omega$;
\item [(ii)] $F(x,\alpha)\neq x$, for all $x\in\partial\Omega$ and $\alpha\geq0$;
\item [(iii)] there exists $\alpha_{0}\geq0$ such that $F(x,\alpha)\neq x$, for all $x\in\overline{\Omega}$ and $\alpha\geq \alpha_{0}$;
\end{itemize}
then
\begin{equation*}
deg(Id-\phi,\Omega,0)=0.
\end{equation*}
Moreover, if there exists $v\in X\setminus\{0\}$ such that $x\neq\phi(x)+\alpha v$,
for all $x\in\partial\Omega$ and $\alpha\geq0$, then conditions $(i)$, $(ii)$, $(iii)$ are satisfied.
\end{theorem}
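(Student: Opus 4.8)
The plan is to compute $deg(Id-\phi,\Omega,0)$ by deforming $\phi$ through the homotopy $F$ and then annihilating the degree via the absence of fixed points for large $\alpha$. First I would settle admissibility: taking $\alpha=0$ in $(ii)$ and using $(i)$ gives $\phi(x)\neq x$ on $\partial\Omega$, so $deg(Id-\phi,\Omega,0)$ is well defined, and since $\Omega$ is bounded we remain in the classical Leray--Schauder framework throughout.

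The heart of the argument is the chain
\begin{equation*}
deg(Id-\phi,\Omega,0)=deg(Id-F(\cdot,0),\Omega,0)=deg(Id-F(\cdot,\alpha_{0}),\Omega,0)=0.
\end{equation*}
The first equality expresses that the degree depends only on boundary values: by $(i)$ the maps $\phi$ and $F(\cdot,0)$ agree on $\partial\Omega$, so the convex homotopy $(1-t)\phi+tF(\cdot,0)$ equals $\phi$ on $\partial\Omega$, has no fixed point there, and (LS2) applies (both maps being compact). The second equality is (LS2) for the admissible homotopy $\alpha\mapsto F(\cdot,\alpha)$ on $\overline{\Omega}\times\mathopen[0,\alpha_{0}\mathclose]$, where $(ii)$ rules out fixed points on $\partial\Omega$ for every $\alpha$. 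The last equality follows from $(iii)$: the solution set $\{x\in\overline{\Omega}\colon x=F(x,\alpha_{0})\}$ is empty, so the solvability property recorded after the axioms forces the degree to vanish (equivalently, apply (LS1) with two empty sets together with (LS3)). If $\alpha_{0}=0$ the conclusion is immediate from $(iii)$.

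For the ``moreover'' part I would take the explicit homotopy $F(x,\alpha):=\phi(x)+\alpha v$ and verify the three conditions. Then $(i)$ reads $F(x,0)=\phi(x)$ and $(ii)$ is exactly the standing hypothesis $x\neq\phi(x)+\alpha v$ on $\partial\Omega$. For $(iii)$ I would use an a priori bound on the parameter: because $\Omega$ is bounded and $\phi$ is compact, the set $\{x-\phi(x)\colon x\in\overline{\Omega}\}$ is bounded, say by $M$; any fixed point $x=\phi(x)+\alpha v$ then satisfies $\alpha\|v\|=\|x-\phi(x)\|\leq M$, so choosing $\alpha_{0}>M/\|v\|$ (legitimate since $v\neq0$) excludes fixed points on all of $\overline{\Omega}$ for $\alpha\geq\alpha_{0}$.

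I expect the main obstacle to be a compactness technicality rather than the degree computation, which is routine once the axioms are in hand. On the full domain $\overline{\Omega}\times\mathopen[0,+\infty\mathclose[$ the map $\phi(x)+\alpha v$ is \emph{not} compact, its image being unbounded in $\alpha$. The point is that every invocation of $F$ above is confined to the $\alpha$-bounded slab $\overline{\Omega}\times\mathopen[0,\alpha_{0}\mathclose]$, on which the image lies in the relatively compact set $\phi(\overline{\Omega})+\mathopen[0,\alpha_{0}\mathclose]v$; making this restriction explicit, so that (LS2) is legitimately applicable, is the step I would treat most carefully.
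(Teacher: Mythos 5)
Your proof is correct. Note that the paper itself does not prove this theorem: it explicitly omits the proof, citing de Figueiredo (where the statement is established for an open ball), so there is no in-paper argument to compare against; your proposal supplies exactly the expected standard argument. Two points you handled that are genuinely the crux and are easy to get wrong: first, hypothesis $(i)$ only asserts $F(\cdot,0)=\phi$ on $\partial\Omega$, not on all of $\overline{\Omega}$, so the preliminary equality $deg(Id-\phi,\Omega,0)=deg(Id-F(\cdot,0),\Omega,0)$ via the convex homotopy $(1-t)\phi+tF(\cdot,0)$ (which coincides with $\phi$ on $\partial\Omega$) is indispensable and you supplied it; second, in the ``moreover'' part the a priori bound $\alpha\|v\|=\|x-\phi(x)\|\leq M$ on $\overline{\Omega}$, giving $(iii)$ with any $\alpha_{0}>M/\|v\|$, is precisely the right way to exploit boundedness of $\Omega$ and compactness of $\phi$. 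Your closing remark on restricting all homotopies to the slab $\overline{\Omega}\times\mathopen[0,\alpha_{0}\mathclose]$ to keep every map compact is also the correct resolution of the only technical subtlety.
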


We omitted the easy proof. In \cite[pp.~67-68]{deFigueiredo} the author proves the statement for an open ball.

Now we consider open and possibly unbounded sets, as in the context of our applications.

\begin{theorem}\label{deFigueiredo2}
Let $X$ be a normed linear space and $\Omega\subseteq X$ an open subset. Let $\phi\colon X\to X$ be a continuous map
and $F\colon X\times\mathopen[0,+\infty\mathclose[\to X$ a completely continuous map.
Suppose that
\begin{itemize}
 \item [(i)] $F(x,0)=\phi(x)$, for all $x\in X$;
 \item [(ii)] for all $\alpha\geq0$ there exists $R_{\alpha}>0$ such that
         if there exist $x\in\overline{\Omega}$ and $\zeta\in[0,\alpha]$ such that $x=F(x,\zeta)$, then $\|x\|\leq R_{\alpha}$ and $x\in\Omega$;
 \item [(iii)] there exists $\alpha_{0}\geq0$ such that $x\neq F(x,\alpha)$, for all $x\in\overline{\Omega}$ and $\alpha\geq\alpha_{0}$.
\end{itemize}
Then the triplet $(Id-\phi,\Omega,0)$ is admissible and $deg(Id-\phi,\Omega,0)=0$.
\end{theorem}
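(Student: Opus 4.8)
The plan is to imitate the proof of Theorem~\ref{deFigueiredo}, but to replace the bounded open set appearing there by the truncation $\Omega\cap B(0,R)$ for a conveniently large radius $R$. In this way the degree on the (possibly unbounded) set $\Omega$ reduces to a degree on a bounded open set, where the classical axioms (LS1)--(LS3) apply directly, and the homotopy $F(\cdot,\alpha)$ can be run between $\phi$ and a fixed-point-free map.

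First I would settle admissibility. Applying hypothesis $(ii)$ with $\alpha=\alpha_{0}$ (which in particular covers the value $\zeta=0$), every solution of $x=\phi(x)=F(x,0)$ with $x\in\overline{\Omega}$ satisfies $\|x\|\le R_{\alpha_{0}}$ and $x\in\Omega$. Hence the fixed-point set $S_{0}=\{x\in\Omega:x=\phi(x)\}$ is bounded and stays off $\partial\Omega$. Since $\phi$ is completely continuous, $S_{0}=\phi(S_{0})$ is contained in the compact set $\overline{\phi(S_{0})}$; being also closed (as the zero set of the continuous map $Id-\phi$), $S_{0}$ is compact. Taking any bounded open neighbourhood $V$ of $S_{0}$ with $\overline{V}\subseteq\Omega$, the restriction $\phi|_{\overline{V}}$ is compact because $\overline{V}$ is bounded, so the triplet $(Id-\phi,\Omega,0)$ is admissible.

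Next I would fix $R>R_{\alpha_{0}}$ and work on the bounded open set $\Omega\cap B(0,R)$. Because $S_{0}\subseteq\Omega\cap B(0,R_{\alpha_{0}})$, there are no solutions of $x=\phi(x)$ on $\partial(\Omega\cap B(0,R))\subseteq\partial\Omega\cup\partial B(0,R)$, so the definition of the degree on a possibly unbounded set (equivalently, excision) gives $deg(Id-\phi,\Omega,0)=deg(Id-\phi,\Omega\cap B(0,R),0)$. I then use the homotopy $H(x,\zeta):=F(x,\zeta)$, $\zeta\in[0,\alpha_{0}]$, which is completely continuous by assumption. Hypothesis $(ii)$ with $\alpha=\alpha_{0}$ guarantees that every solution $x\in\overline{\Omega}$ of $x=F(x,\zeta)$ with $\zeta\in[0,\alpha_{0}]$ obeys $\|x\|\le R_{\alpha_{0}}<R$ and $x\in\Omega$; in particular no such solution lies on $\partial(\Omega\cap B(0,R))$. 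Homotopy invariance (LS2) on the bounded set then yields $deg(Id-F(\cdot,0),\Omega\cap B(0,R),0)=deg(Id-F(\cdot,\alpha_{0}),\Omega\cap B(0,R),0)$. The left-hand side equals $deg(Id-\phi,\Omega\cap B(0,R),0)$ by $(i)$, while on the right-hand side hypothesis $(iii)$ forces $x\ne F(x,\alpha_{0})$ for every $x\in\overline{\Omega}$, so the equation $x=F(x,\alpha_{0})$ has no solution in $\overline{\Omega\cap B(0,R)}$ and its degree vanishes. Chaining these equalities gives $deg(Id-\phi,\Omega,0)=0$.

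The delicate point, and the only place where the unboundedness of $\Omega$ really intervenes, is the bookkeeping on the boundary of the truncated set: intersecting with $B(0,R)$ introduces the extra boundary piece $\Omega\cap\partial B(0,R)$, and one must be sure the homotopy never has a fixed point there. This is exactly what the uniform-in-$\zeta$ a priori bound $R_{\alpha_{0}}$ furnished by $(ii)$ provides, so once $R$ is chosen larger than $R_{\alpha_{0}}$ the argument goes through; the remaining steps are routine invocations of (LS1)--(LS3).
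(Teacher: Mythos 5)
Your proof is correct and follows essentially the same route as the paper's: truncate $\Omega$ by a large ball whose radius exceeds the a priori bound $R_{\alpha_{0}}$ from $(ii)$, run the homotopy $F(\cdot,\zeta)$, $\zeta\in[0,\alpha_{0}]$, on the bounded set, and conclude by $(iii)$ that the degree vanishes (the paper uses $A:=B(0,R_{\alpha_{0}+1})\cap\Omega$ after a harmless monotonicity normalization of the $R_{\alpha}$, which plays exactly the role of your $R>R_{\alpha_{0}}$). Your more detailed verification of admissibility (compactness of $S_{0}$, using that $\phi=F(\cdot,0)$ is completely continuous by $(i)$) is a welcome elaboration of a step the paper leaves implicit.
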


\begin{proof}
Without loss of generality, we can assume that $R_{\alpha'}<R_{\alpha''}$, if $\alpha'<\alpha''$.
The set $A:=B(0,R_{\alpha_{0}+1})\cap\Omega$ is open and bounded,
and, by conditions $(ii)$ and $(iii)$, it contains all possible fixed points of $F(\cdot,\alpha)$ in $\overline{\Omega}$.
Using also $(i)$, we have that
\begin{equation*}
deg(Id-\phi,\Omega,0)=deg(Id-F(\cdot,0),\Omega,0)=deg(Id-F(\cdot,0),A,0).
\end{equation*}
Taking $h_{\alpha}:=F(\cdot,\alpha)$, $\alpha\in\mathopen[0,\alpha_{0}\mathclose]$, as admissible homotopy, by $(ii)$ and (LS2) we obtain that
\begin{equation*}
deg(Id-F(\cdot,\alpha),A,0)=const., \qquad 0\leq\alpha\leq\alpha_{0}.
\end{equation*}
By $(iii)$, we conclude that
\begin{equation*}
deg(Id-\phi,\Omega,0)=deg(Id-F(\cdot,\alpha_{0}),A,0)=0.
\end{equation*}
\end{proof}

\bibliographystyle{elsart-num-sort}
\bibliography{FeltrinZanolin_biblio}

\bigskip
\begin{flushleft}

{\small{\it Preprint}}

\end{flushleft}

\end{document}